\documentclass[10pt]{amsart}
\usepackage{academicons}
\usepackage{hyperref}
\usepackage[bottom]{footmisc}
\usepackage{geometry}
\geometry{left=4.1 cm,right=4.0cm}
\usepackage{graphicx}
\usepackage{enumitem}                               
\usepackage{amssymb}
\usepackage{amsmath}
\usepackage{amsthm}
\usepackage{xcolor}
\usepackage{ulem}
\newtheorem{theorem}{Theorem}

\newtheorem{corollary}[theorem]{Corollary}

\newtheorem{theoremn}{Theorem}[section] 
\newtheorem{lemman}[theoremn]{Lemma}
\newtheorem{propositionn}[theoremn]{Proposition}

\newtheorem{corollaryn}[theoremn]{Corollary}

\newtheorem*{theorem*}{Theorem}
\newtheorem*{lemma*}{Lemma}

\theoremstyle{definition}
\newtheorem{definition}[theoremn]{Definition}

\newtheorem{remark}[theoremn]{Remark}
\newtheorem{example}[theoremn]{Example}

\newtheoremstyle{definition*}
{\topsep}
{\topsep}
{}
{0pt}
{\bfseries}
{.}
{ }
{\thmname{#1}\thmnumber{ #2}\thmnote{ (#3)}}

\theoremstyle{definition*}
\newtheorem*{definition*}{Definition}
\newtheorem*{remark*}{Remark}
\newtheorem*{claim*}{Claim}

\newcommand{\E}{\mathcal E}
\newcommand{\D}{\mathcal D}

\newcommand{\F}{\mathcal F}


\begin{document}

	\baselineskip 13pt

\title[A generalization of  Alperin Fusion theorem ]{A generalization of  Alperin Fusion theorem \\ and \\ its applications}

\author{{M.Yas\.{I}r} K{\i}zmaz }
\address{ Martin-Luther-Universit\"at Halle-Wittenberg Institut für Mathematik, Theodor-Lieser-Str. 5, 06120 Halle, Germany}
\email{ysrkzmz@gmail.com}

\subjclass[2020]{20D20, 20D45, 20E45}
\keywords{Alperin fusion thoerem, semisaturated fusion systems, strongly resistant $p$-groups}

\begin{abstract}
Let $\F$ be a saturated fusion system on a finite $p$-group $S$, and let $P$ be a strongly $\F$-closed subgroup of $S$. We define the concept ``$\F$-essential subgroups with respect to $P$" which are some proper subgroups of $P$ satisfying some technical conditions, and show that an $\F$-isomorphism between subgroups of $P$ can be factorised by some automorphisms of $P$ and $\F$-essential subgroups with respect to $P$. When $P$ is taken to be equal $S$,  Alperin-Goldschmidt fusion theorem can be obtained as a special case. We also show that $P\unlhd \F$ if and only if there is no $\F$-essential subgroup with respect to $P$. The following definition is made: a $p$-group $P$ is \textit{strongly resistant} in saturated fusion systems if $P\unlhd \F$ whenever there is an over $p$-group $S$ and a saturated fusion system $\F$ on $S$ such that $P$ is strongly $\F$-closed. It is shown that several classes of $p$-groups are strongly resistant, which appears as our third main theorem. We also give a new necessary and sufficient criteria for a strongly $\F$-closed subgroup to be normal in $\F$. These results are obtained as a consequences of developing a theory of quasi and semi-saturated fusion systems, which seems to be interesting for its own right. 
\end{abstract}
\maketitle
\section{Introduction}
 The study of finite groups has been significantly enriched by the development of powerful tools and techniques that have provided deep insights into their structure. Among these tools, the Alperin fusion theorem, more precisely the Alperin-Goldschmidt fusion theorem, stands out as a fundamental result that has played a crucial role in our understanding of finite groups. The theorem, which concerns the fusion of elements in $p$-subgroups of a given finite group, has found applications in diverse areas of mathematics including group theory and has inspired a vast body of research.

The concept of saturated fusion systems has emerged as an important generalization of finite groups, allowing for a more unified and elegant treatment of various group-theoretic phenomena. Saturated fusion systems can be regarded as abstract categories that retain essential information about the fusion of elements in $p$-subgroups, without necessarily requiring the full group structure. This abstraction has proven to be remarkably fruitful, paving the way for new advances in the study of finite groups and related algebraic structures.

In the present paper, we embark on an investigation of the Alperin fusion theorem in the context of saturated fusion systems. Our goal is to obtain a generalization of this fundamental theorem that is applicable to saturated fusion systems, thereby extending its reach and enhancing its utility in the study of these appealing structures. Through a careful examination of the key properties of saturated fusion systems, we develop a variant of the Alperin fusion theorem, which provides additional insights into a morphism situated within a strongly closed subgroup and elucidates how this strongly closed subgroup is embedded in a saturated fusion system. This enhanced understanding will contribute to a deeper exploration of strongly closed subgroups of fusion systems.

Our work builds upon and extends previous research in the area, providing a valuable contribution to the ongoing exploration of saturated fusion systems and their interplay with finite groups. We believe that our generalization of the Alperin fusion theorem will prove to be a useful tool for researchers working in this field, opening up new avenues for further study and fostering a deeper understanding of the rich and intricate world of finite group theory.

Before establishing the theorem, we first need to make some definitions. We should also note that all groups considered in this paper are finite. \newline

\textbf{Definitions}
\begin{enumerate}
	
	\item[(\textbf{a})] 	Let $G$ be a group and $S\in Syl_p(G)$, and $D>1$ be a strongly closed subgroup in $ S$ with respect to $G$. We say that  a proper subgroup $H$ of $G$ is \textbf{strongly $D$-embedded in $G$} if
	\begin{enumerate}
		\item[i)] $D^x \leq H$ for some $x\in G$.
		\item[ii)]   for all $g\in G-H$, the subgroup $H\cap H^g$ does not contain any $G$-conjugate of any nontrivial subgroup $U$ of $D$.
	\end{enumerate}

 \item[(\textbf{b})]   Let $S$ be a $p$-group and $\F$ be a saturated fusion system on $S$, and $P$ be a strongly $\F$-closed subgroup of $S$. A subgroup $Q$ of $P$ is called \textbf{ $\F$-centric with respect to $P$} if $C_P(Q\psi)\leq Q\psi$ for all $\psi \in Hom_{\F}(Q,P)$. Moreover, we say that a proper subgroup $Q$ of $P$ is an \textbf{$\F$-essential subgroup with respect to $P$} if \begin{enumerate}
 	\item[i)] $Q$ is an $\F$-centric subgroup with respect to $P$.
 	\item[ii)] $Q$ is fully normalized.
 	\item[iii)]  $Out_{\F}(Q)$ has a strongly $Out_{P}(Q)$-embedded subgroup. 
                       \end{enumerate} 

\end{enumerate}

We observe that when $D=S$, the condition for $H$ to be strongly $D$-embedded in $G$ is equivalent to being strongly $p$-embedded in $G$. With this observation, we can proceed to establish the generalization of the Alperin fusion theorem:

\begin{theorem} \label{main thm A}
	Let $\F$ be a saturated fusion system on a $p$-group  $S$ and $P$ be a strongly $\F$-closed subgroup of $S$, and  let $Q,R \leq P$ and  $\psi:Q\to R$ be an $\F$-isomorphism. Then there exists
	
	\begin{enumerate}
		\item[(a)] a sequence of isomorphic subgroups $Q = Q_0, Q_1, . . . , Q_{n} = R$ of $P$,
		
		\item[(b)] a sequence : $S_1, S_2, . . . , S_n$ of $P$ where each $S_i$ is an $\F$-essential subgroup  with respect to $P$  or $S_i=P$ such that $Q_{i-1},Q_i\leq S_i$ for $i=1,\ldots, n$,
		
		\item[(c)] $\psi_i\in Aut_{\F}(S_i)$ such that $( Q_{i-1})\psi_i=Q_i$ and $\psi=(\psi_1|_{Q_1})\circ(\psi_2|_{Q_2}) \circ \ldots \circ (\psi_n|_{Q_n})$.
	\end{enumerate}

\end{theorem}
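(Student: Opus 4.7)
The plan is to adapt the classical Alperin--Goldschmidt proof of the fusion theorem so that the argument runs inside $P$ in place of $S$, using strong $\F$-closure of $P$ to keep all constructions within $P$. The induction is on $[P:Q]$; the base case $Q = P$ is immediate, with $n = 1$, $S_1 = P$, $\psi_1 = \psi \in Aut_\F(P)$. For the inductive step with $Q < P$, I first reduce to $Q, R$ fully normalized and $\F$-centric with respect to $P$: if $C_P(Q) \not\leq Q$, the extension axiom extends $\psi$ to $QC_P(Q) \to RC_P(R)$, a strictly larger pair in $P$ by strong $\F$-closure; and replacing $Q, R$ by fully normalized $\F$-conjugates is done via auxiliary $\F$-isomorphisms handled inductively.

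The core step is a $P$-analogue of the Alperin--Goldschmidt key lemma, which I would prove as a separate assertion: for $Q < P$ fully normalized and $\F$-centric with respect to $P$, either (a) some $\alpha \in Aut_\F(Q)$ makes $\psi \alpha$ extend to an $\F$-morphism defined on a subgroup of $P$ strictly containing $Q$, or (b) $Q$ is $\F$-essential with respect to $P$. In case (a) the extension factorizes by induction, and the residual $\alpha^{-1} \in Aut_\F(Q)$ is handled by recursing the dichotomy on $\alpha^{-1}$. In case (b), $Q$ itself serves as the required essential $S_i$, and $\psi$ factors directly through an element of $Aut_\F(Q)$. The main technical obstacle is constructing, in case (b), the strongly $Out_P(Q)$-embedded subgroup in $Out_\F(Q)$: it is assembled from all $Aut_\F(Q)$-conjugates of $Aut_P(Q)$ together with the non-extending automorphisms, and both defining conditions must be verified using the saturation axiom and the hypothesis that no modification of $\psi$ extends within $P$.

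For the moreover statement, the backward direction follows from (a)--(c): if there is no $\F$-essential subgroup with respect to $P$, then every factorization has $S_i = P$, so every $\F$-isomorphism between subgroups of $P$ is the restriction of an element of $Aut_\F(P)$; combined with strong $\F$-closure this yields $P \unlhd \F$. For the forward direction, if $P \unlhd \F$ then every $\alpha \in Aut_\F(Q)$ (with $Q \leq P$) lifts to some $\tilde\alpha \in Aut_\F(P)$ normalizing $Q$; hence $Aut_P(Q) \unlhd Aut_\F(Q)$, so $Out_P(Q) \unlhd Out_\F(Q)$. Any $\F$-essential $Q$ with respect to $P$ would satisfy $Q < P$ and be $\F$-centric with respect to $P$, forcing $Out_P(Q) \neq 1$; but a nontrivial normal subgroup of $Out_\F(Q)$ cannot admit a strongly embedded subgroup in the sense of the definition above, since $Out_P(Q) \leq H \cap H^g$ for every $g$, violating condition (ii). This contradiction shows no $\F$-essential subgroup with respect to $P$ exists.
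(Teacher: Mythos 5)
Your overall plan---run the Alperin--Goldschmidt induction on $|P:Q|$ inside $P$, using strong $\F$-closure to keep the extensions supplied by saturation within $P$---is in substance the route the paper takes (there it is packaged as the restricted system $\E=\F_{\mid\leq P}$ being semisaturated, with Theorem \ref{thm Alperin 1} playing the role of your induction). The genuine gap is in your case (b), the step you yourself call the main technical obstacle, and the construction you sketch for it would fail. The subgroup ``assembled from all $Aut_{\F}(Q)$-conjugates of $Aut_P(Q)$ together with the non-extending automorphisms'' is not the right object: the group generated by all conjugates of $Aut_P(Q)$ is normal in $Aut_{\F}(Q)$, and a proper normal subgroup containing $Aut_P(Q)$ can never be strongly $Out_P(Q)$-embedded (condition (ii) fails at once); moreover it is the \emph{extending} automorphisms, i.e.\ those $\phi$ with $N_{\phi}(Q)\cap P>Q$---equivalently, once $Q$ is centric with respect to $P$, those with $Out_P(Q)\cap Out_P(Q)^{\phi}>1$---that generate the correct candidate $H$, while the non-extending ones are precisely the elements that must lie outside $H$. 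More importantly, verifying condition (ii) for this $H$ is not a consequence of ``no modification of $\psi$ extends'': it rests on the fact that $Aut_P(Q)$ is strongly closed in a Sylow $p$-subgroup of $Aut_{\F}(Q)$ with respect to $Aut_{\F}(Q)$, which is exactly where strong $\F$-closure of $P$ and receptivity of a fully normalized conjugate enter (the paper's Lemmas \ref{lem aut_p(Q) is strongly closed}, \ref{lem fully norm imps strongly closed} and \ref{lem strongly d-embedded}, feeding into Proposition \ref{prop charac. of essentails} and Corollary \ref{cor equivalent def of essential}). Without this ingredient your dichotomy does not deliver an $\F$-essential subgroup with respect to $P$ in the sense required by the statement. (A smaller looseness: ``recursing the dichotomy on $\alpha^{-1}$'' cannot terminate by itself, since composing with elements of $Aut_{\F}(Q)$ never changes $|P:Q|$; in the non-essential case what is needed is the generation statement $Aut_{\F}(Q)=\langle\phi \mid N_{\phi}(Q)\cap P>Q\rangle$, after which induction applies to the extension of each generator.)

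The backward direction of the ``moreover'' clause also has a real gap. From (a)--(c) with no essential subgroups you only obtain that every $\F$-isomorphism between subgroups of $P$ is a restriction of an element of $Aut_{\F}(P)$, i.e.\ $P$ is normal in the restricted fusion system on $P$; but $P\unlhd \F$ constrains all morphisms of $\F$, including those between subgroups of $S$ not contained in $P$, and ``combined with strong $\F$-closure'' is not an argument for that passage. The paper bridges it by noting that each term of the ascending central series of $P$ is then strongly $\F$-closed and invoking Aschbacher's criterion \cite[Proposition 4.62]{Crv} (this is Lemma \ref{lem correspondence }(b)); an input of this strength must be supplied at this point. Your forward direction, by contrast, is sound: $P\unlhd\F$ forces $Out_P(Q)\unlhd Out_{\F}(Q)$, centricity with respect to $P$ together with $Q<P$ forces $Out_P(Q)\neq 1$, and a nontrivial normal $D$ is incompatible with the existence of a strongly $D$-embedded subgroup.
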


The aforementioned theorem essentially states that an $\F$-isomorphism between subgroups of a strongly $\F$-closed subgroup $P$ can be factored through certain automorphisms of $P$ and $\F$-essential subgroups with respect to $P$.

Consider $Q$ as a fully normalized subgroup of $P$. We will later demonstrate that $Aut_P(Q)$ is strongly closed in $Aut_S(Q)$ with respect to $Aut_{\F}(Q)$ (see Lemma \ref{lem aut_p(Q) is strongly closed}). Consequently, $Out_P(Q)$ is strongly closed in $Out_S(Q)$ with respect to $Out_{\F}(Q)$. To ensure that $Q$ is essential with respect to $P$, we impose the additional condition that $Q$ is an $\F$-centric subgroup with respect to $P$, and $Out_{\F}(Q)$ contains a strongly $Out_P(Q)$-embedded subgroup. This results in certain constraints on the structure of $Aut_{\F}(Q)$ and $Out_{\F}(Q)$ (see Proposition \ref{prop charac. of essentails}):

\begin{enumerate}
	
	\item[(a)]  $Out_{P}(Q)\cap O_p(Out_{\F}(Q))=1.$
	\vspace{.2cm}
	\item[(b)] $Inn(Q)=Core_G(D)$ where $G=Aut_{\F}(Q)$ and $D=Aut_P(Q)$.
\end{enumerate}

 Therefore, our essential subgroups with respect to $P$ deviate somewhat from the ``typical ones" and, in fact, possess a more general form:  If we take $P=S$, that $Out_{\F}(Q)$ has a strongly $Out_{P}(Q)$-embedded subgroup is equivalent to say that $Out_{\F}(Q)$ has a strongly $p$-embedded subgroup as $Out_{P}(Q)\in Syl_p(Out_{\F}(Q))$ in that case, and so $Q$ turns to be an ``typical essential" subgroup. Consequently, the Alperin-Goldschmidt fusion theorem corresponds to the special layer where $P=S$ in our theorem.

\begin{theorem} \label{A' theroem}
	Let $\F$ be a saturated fusion system on a $p$-group  $S$ and $P$ be a strongly $\F$-closed subgroup of $S$. Then $P\unlhd \F$ if and only if there is no $\F$-essential subgroup with respect to $P$.

\end{theorem}

The above theorem supplies a natural machinery to investigate whether $P$ is normal in $\F$ by relying only on the internal structure of $P$. In the following theorem, we shall observe that when $P$ is in some certain  class of $p$-groups, it is always normal in  $\F$, regardless of the structure of over group $S$ and saturated fusion system $\F$ on $S$. For the next theorem, recall that a $p$-group $P$ is said to be \textbf{generalized extraspecial} $p$-group if $\Phi(P)=P'\cong C_p.$
	
	\begin{theorem}\label{main thm B}
			Let $\F$ be a saturated fusion system on a $p$-group  $S$ and $P$ be a strongly $\F$-closed subgroup of $S$. Suppose that  one of the following hold: 
			\begin{enumerate}
				\item[(a)] $P$ is a generalized extraspecial $p$-group such that $P$ is not in the form of  $E\times A$ where $A$ is an elementary abelian group and $E$ is
				
				\begin{enumerate}
					\item[(i)] a dihedral group of order $8$ when $p=2$.
					
					\item[(ii)] an extraspecial $p$-group of order $p^3$ with exponent $p$ when $p>2$.
					
				\end{enumerate} 
			
				\item[(b)] $p>2$ and $P$ is a metacyclic $p$-group.
				\item[(c)] $p=2$ and $P$ is a metacyclic $p$-group such that $P$ is not dihedral, semidihedral or generalized quaternion.
				
				\item[(d)] $P$ is isomorphic to one of the following $p$-groups of rank $2$:
				\begin{enumerate}
				 \item[(i)] $C(p,r)=\langle a^p=b^p=c^{p^{r-2}}=1 \mid [a,b]=c^{p^{r-3}}, [a,c]=[b,c]=1 \rangle$ where $p\geq 3$  and $r\geq 4$.
				
				\item[(ii)] $ G(p,r,\epsilon)=\langle a^p=b^p=c^{p^{r-2}}=1 \mid [b,c]=1, [a,b^{-1}]=c^{p^{\epsilon r-3}}, [a,c]=b \rangle$ where $p\geq 5$ and $r\geq 4.$
			\end{enumerate}
			\end{enumerate}

		Then $P\unlhd \F$, that is, $\F=N_{\F}(P)$. Moreover, if $(c)$ holds and $P$ is not homocyclic abelian, then $\F=SC_{\F}(P)$.
	\end{theorem}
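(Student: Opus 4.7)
The plan is to invoke the ``moreover'' part of Theorem~\ref{main thm A}: proving $P\unlhd\F$ reduces to ruling out the existence of any $\F$-essential subgroup $Q<P$ with respect to $P$. So I would argue by contradiction, assuming such a $Q$ exists. By the characterization of essentials (Proposition~\ref{prop charac. of essentails}) such a $Q$ must satisfy $Out_P(Q)\cap O_p(Out_{\F}(Q))=1$ and $Inn(Q)=\mathrm{Core}_{Aut_{\F}(Q)}(Aut_P(Q))$, be $\F$-centric with respect to $P$, and have $Out_{\F}(Q)$ admitting a strongly $Out_P(Q)$-embedded subgroup. In each case the strategy is to exploit the subgroup lattice of $P$ and the known automorphism structure of its characteristic sections to contradict one of these four conditions.

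For case~(a), $\F$-centricity forces $Z(P)\leq Q$, so $\overline Q:=Q/Z(P)$ sits inside $\overline P:=P/Z(P)$, which is elementary abelian and carries a non-degenerate alternating form coming from the commutator map into $P'\cong C_p$. A symplectic analysis will show that $N_P(Q)/Q$ is elementary abelian, that $Out_P(Q)$ has controlled rank, and that the existence of a strongly $Out_P(Q)$-embedded subgroup in $Out_{\F}(Q)$ forces $\overline Q$ to be small and $Out_{\F}(Q)$ to contain a $GL_2(p)$-type section. This can genuinely happen only when $Q$ has order $p^2$ inside $D_8$ or inside the order-$p^3$ exponent-$p$ extraspecial group (there $Out(Q)=GL_2(p)$ admits the required strongly $p$-embedded Borel), matching exactly the excluded factors in~(a). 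For cases~(b) and~(c), Blackburn's structure theorem for metacyclic $p$-groups provides that every subgroup of $P$ is metacyclic, $Aut(P)$ is solvable, and $N_P(Q)/Q$ is cyclic for every $Q<P$; hence $Out_P(Q)$ is cyclic. A solvable group with a strongly $p$-embedded subgroup having cyclic Sylow $p$-subgroup must have $p$-length $1$, forcing $Out_P(Q)\leq O_p(Out_{\F}(Q))$ and contradicting the first condition. The $2$-groups excluded in~(c) (dihedral, semidihedral, generalized quaternion) are precisely those for which this solvability/cyclicity can fail. Case~(d) is treated analogously via Blackburn's description of the characteristic subgroups and automorphism groups of $C(p,r)$ and $G(p,r,\epsilon)$; the numerical hypotheses $p\geq 3$ and $p\geq 5$ are exactly what is needed to exclude small exceptions in which $Out_{\F}(Q)$ could contain $SL_2(p)$.

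The main obstacle is case~(a): one must carry out a careful symplectic-geometry calculation over $\mathbb{F}_p$, track the contribution of the elementary-abelian direct factor $A$ when $P=E\times A$, and invoke the classification of subgroups of $GL_2(p)$ admitting a strongly $p$-embedded subgroup in order to pinpoint exactly the two exceptional groups $E$. Finally, for the ``moreover'' assertion in~(c) that $\F=SC_{\F}(P)$ whenever $P$ is a non-homocyclic abelian metacyclic $2$-group, with $P\unlhd\F$ already in hand I would apply the classical Alperin--Goldschmidt fusion theorem to $\F$ itself and use the structural features of a non-homocyclic metacyclic $2$-group to show that every $\F$-essential subgroup of $S$ is system-centric, which together with Theorem~\ref{main thm A} (specialized to $P=S$) yields the desired equality.
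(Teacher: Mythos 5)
Your overall reduction is the right one and matches the paper's framework: by the ``moreover'' part of Theorem~\ref{main thm A} (equivalently, by passing to the semisaturated system $\E=\F_{\mid\leq P}$ and Corollary~\ref{cor correspondence}) it suffices to rule out fully normalized $\F$-essential subgroups with respect to $P$, using the constraints of Proposition~\ref{prop charac. of essentails}. The genuine gap is that your case analyses are purely \emph{local} (classifying which $Out_{\F}(Q)$ could admit a strongly $Out_P(Q)$-embedded subgroup), and this cannot separate the exceptional groups from the non-exceptional ones. Concretely, in case (a) take $P$ extraspecial of order $p^3$ and exponent $p^2$ (not excluded, hence claimed resistant) versus exponent $p$ (excluded): in both, the candidate $Q\cong C_p\times C_p$ has $Out_P(Q)$ of order $p$ and $Out(Q)=GL_2(p)$ contains $SL_2(p)$ with the required strongly embedded configuration, so no symplectic computation or classification of subgroups of $GL_2(p)$ can distinguish them; also your claim ``only when $Q$ has order $p^2$'' ignores the essentials $Q=M\times A$ in $E\times A$. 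The paper's proof of Theorem~\ref{thm gen. extraspacial} is essentially global: it first shows $P'$ is not strongly closed (Corollary~\ref{cor Aschbacher's lemma}), forces an elementary abelian maximal subgroup and $P=E\times A$ (Lemma~\ref{lem generalized extra special}), and then, to pin down exponent $p$, produces a central involution of $\langle D,D^x\rangle$, extends it to $\alpha\in Aut_{\F}(P)$, finds an $\alpha$-invariant cyclic subgroup outside $M=\Omega(P)$ by the counting result (Proposition~\ref{prop counting theorem}), and derives a contradiction via coprime action --- none of which is visible from the local data. The same criticism applies to your case (d): for $G(p,r,\epsilon)$ the contradiction again comes from extending automorphisms to $P$, counting invariant cyclic subgroups, and power-automorphism congruences (Lemma~\ref{lem if n,m then nm}), not from excluding $SL_2(p)$ locally.

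In cases (b) and (c) several of your auxiliary claims are false: $N_P(Q)/Q$ need not be cyclic for metacyclic $P$; $Aut(P)$ need not be solvable (e.g.\ $Aut(C_{p^n}\times C_{p^n})$ for $p\geq 5$); and a solvable group with cyclic Sylow $p$-subgroups can perfectly well have a strongly $p$-embedded subgroup (a Frobenius group $C_q\rtimes C_p$ with $p\mid q-1$), while ``strongly embedded'' is incompatible with the Sylow lying in $O_p$, so the deduction ``$Out_P(Q)\leq O_p(Out_{\F}(Q))$'' is not available. The actual obstruction for odd $p$ is Lemma~\ref{lem aut of metacyclic of odd order}: $Aut(Q)$ is $p$-closed unless $Q$ is homocyclic, so the only surviving candidate is $Q\cong C_{p^n}\times C_{p^n}$, and eliminating it is again a global reduction ($Q\lhd P$, $\F=N_{\F}(Q)$ on the restricted system, quotient by $\Phi(Q)$, reduce to $|P|=p^3$ and invoke case (a)); your sketch does not address this case at all, and the $p=2$ case needs the separate $Q\cong Q_8$ analysis, passage to $\E/Z(P)$ and Lemma~\ref{lem |P:P'|=4}. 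Finally, your route to the ``moreover'' statement is off target: the paper does not study essential subgroups of $S$ there, but uses Lemma~\ref{lem Aut(P) of metacyclic 2 -groups} to conclude that $Aut(P)$ is a $2$-group when $P$ is metacyclic, not $Q_8$ and not homocyclic abelian, so that $Aut_{\F}(P)=Aut_S(P)$ and $\F=SC_{\F}(P)$ follows from $P\unlhd\F$; showing that essentials of $S$ are centric would not give this equality.
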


The proof of Theorem C is considered the most challenging and technical within this article, leading to the dedication of an entire chapter, Chapter IV (Applications), to its demonstration.  In this section, we also develop some new techniques related to the coprime action and its use in fusion systems as an important tool. We now leave this discussion to Chapter IV and focus on the consequences of this theorem.
	
Recall that a $p$-group $S$ is called \textbf{resistant in saturated fusion systems} if $S\unlhd \F$  for any saturated fusion system $\F$ on $S$. 
	The above theorem leads us to make the following definition:
	
	We say that a $p$-group $P$ is \textbf{strongly resistant in saturated fusion systems} if $P\unlhd \F$ whenever there is an over $p$-group $S$ and a saturated fusion system $\F$ on $S$ such that $P$ is strongly $\F$-closed. Only known strongly resistant $p$-groups are abelian $p$-groups, which is due to G. Glauberman, and hence the above theorem demonstrates the first examples of nonabelian class of $p$-groups which are strongly resistant. The $(b)$ and $(d)$ part of the theorem also implies that if $P$ is of rank $2$ and is not of maximal class then $P$ is strongly resistant in saturated fusion systems when $p\geq 5$ (see  \cite[ Theorem A.1]{Drv} for the list of $p$-groups of rank 2). 
	
	 We would like to note that the class of $p$-groups in the above theorem are already known to be resistant in saturated fusion systems  (see \cite[Theorem 5.3, Proposition 5.4]{Stan}),  \cite[Theorem 3.7]{Crv2} and \cite[Theorems 4.2 and 4.3]{Drv}). Hence, Theorem C is a vast generalization of those results. 

We adopt a useful definition from \cite{myk}. Let $\Gamma$ be  the nontrivial semidirect product of $\mathbb Z_4$ by $\mathbb Z_4$.
We say that $P$ is of \textbf{odd type} if  $P$ has no subgroup isomorphic to $Q_8$ and $\Gamma$. Notice that $P$ is always of odd type when $p$ is odd. Recall that $\Omega_i(P)$ denote the subgroup $\langle \{x\in P\mid x^{p^i}=1 \}\rangle$ of $P$ for $i=1,2,...$ and we simply use $\Omega(P)$ in place of $\Omega_1(P)$. Now set \[\Omega^*(P)=\begin{cases} 
	\Omega(P) & \text{if $P$ is off odd type} \\
	\Omega_2(P) & otherwise
	
\end{cases}
\]
The following theorem generalizes a result of Aschbacher (see \cite[Proposition 4.62]{Crv}).

\begin{theorem}\label{main thm C}
Let $\F$ be a saturated fusion system on a $p$-group $S$ and $P$ be a strongly $\F$-closed subgroup of $S$. Then $P\unlhd \F$ if and only if $\Omega^*(P)$ has a subgroup series $1=Q_0\leq Q_1\leq Q_2\ldots \leq Q_{n-1}\leq Q_n=\Omega^*(P)$ such that 

\begin{enumerate}
\item[(a)] $Q_i$ is strongly $\F$-closed for $i=0,1,\ldots, n$.
\item[(b)] $[Q_{i+1},P]\leq Q_i$ for $i=0,1,...,n-1.$
\end{enumerate}

\end{theorem}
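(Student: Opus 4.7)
The plan is to prove the two directions separately, invoking the moreover part of Theorem A for the harder implication.

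For the $(\Rightarrow)$ direction, assuming $P\unlhd \F$, the natural choice is to intersect the upper central series of $P$ with $\Omega^*(P)$: set $Q_i=Z_i(P)\cap \Omega^*(P)$. Each $Q_i$ is characteristic in $P$ and hence $Aut_\F(P)$-invariant. Since $P\unlhd \F$, every $\F$-morphism with source in $P$ extends to an automorphism of $P$, which lets me upgrade ``$Aut_\F(P)$-invariant'' to ``strongly $\F$-closed'', giving condition (a). For (b), $[Q_{i+1},P]\leq[Z_{i+1}(P),P]\leq Z_i(P)$ and, because $\Omega^*(P)\unlhd P$ and $Q_{i+1}\leq \Omega^*(P)$, the commutator also lies in $\Omega^*(P)$; hence $[Q_{i+1},P]\leq Z_i(P)\cap \Omega^*(P)=Q_i$. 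Nilpotence of $P$ makes the series terminate at $\Omega^*(P)$.

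For the $(\Leftarrow)$ direction, assume such a series exists and suppose for contradiction that some $\F$-essential subgroup $Q$ with respect to $P$ exists. Set $R=Q\cap \Omega^*(P)$ and $R_i=Q\cap Q_i$. The strong $\F$-closedness of each $Q_i$ forces $R_i$ to be $Aut_\F(Q)$-invariant (restrictions of $\F$-morphisms in $Q$ send $R_i$ into $Q_i\cap Q=R_i$), and the commutator condition $[Q_{i+1},P]\leq Q_i$ forces $Aut_P(Q)$ to act trivially on each factor $R_{i+1}/R_i$, since for $p\in N_P(Q)$ and $q\in R_{i+1}$ one has $q^{-1}q^p=[q,p]\in Q\cap Q_i=R_i$.

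To conclude, I would exhibit a normal $p$-subgroup of $Aut_\F(Q)$ containing $Aut_P(Q)$. Let $L$ be the kernel of the restriction map $Aut_\F(Q)\to Aut(R)$ and let $M$ be the subgroup of $Aut_\F(Q)$ whose restriction to $R$ stabilizes the series $(R_i)$. Both are normal; $L\leq M$; and $M/L$ embeds into the stabilizer of the series in $Aut(R)$, so $M/L$ is a $p$-group by the stability theorem. Since $\Omega^*(Q)\leq R$, any element of $L$ restricts trivially to $\Omega^*(Q)$, and the classical fact that $p'$-automorphisms of a $p$-group trivial on $\Omega^*$ are trivial gives that $L$ itself is a $p$-group. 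Hence $M\leq O_p(Aut_\F(Q))$, and passing to outer automorphisms yields $Out_P(Q)\leq O_p(Out_\F(Q))$. Combining with the characterization of $\F$-essential subgroups ($Out_P(Q)\cap O_p(Out_\F(Q))=1$) forces $Out_P(Q)=1$; but $\F$-centricity gives $C_P(Q)\leq Q$, so $Out_P(Q)=N_P(Q)/Q$, contradicting $Q<N_P(Q)$ in a $p$-group. Theorem A then gives $P\unlhd \F$. The main obstacle is the single nontrivial auxiliary fact powering this argument, namely that any $p'$-automorphism of a $p$-group trivial on $\Omega^*$ must be trivial; this is precisely why the sharpened subgroup $\Omega^*$ (rather than $\Omega_1$) appears in the statement, with the odd/even dichotomy in its definition handling the $Q_8$- and $\Gamma$-obstructions at $p=2$.
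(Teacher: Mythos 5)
Your proof is correct and takes essentially the same route as the paper: the forward direction uses the series $Z_i(P)\cap\Omega^*(P)$ exactly as in the paper's semisaturated version (Theorem \ref{thm gen  Aschbacher's result}), and the backward direction is the same contradiction argument, showing a putative essential subgroup $Q$ would satisfy $Out_P(Q)\leq O_p(Out_{\F}(Q))$ against the characterization $Out_P(Q)\cap O_p(Out_{\F}(Q))=1$ from Proposition \ref{prop charac. of essentails}. Your $L$/$M$ decomposition along the series $Q\cap Q_i$ is just a repackaging of the paper's Lemma \ref{coprime action 3}(b) applied to $\Omega^*(Q)\cap Q_i$, resting on the same key fact that a $p'$-automorphism trivial on $\Omega^*$ is trivial.
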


As an application of the above theorem, we obtain another class of $p$-groups which are strongly resistant.
 
\begin{corollary}
	Let $P$ be a $p$-group of odd type such that $\Omega(P)\leq Z(P)$. Then $P$ is strongly resistant in saturated fusion systems.
	\end{corollary}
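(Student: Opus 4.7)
The plan is to invoke Theorem \ref{main thm C} using the two-term series $1 = Q_0 \leq Q_1 = \Omega(P)$. Since $P$ is of odd type, $\Omega^*(P) = \Omega(P)$ by definition, so the series does terminate at $\Omega^*(P)$, and it remains only to verify the two conditions in Theorem \ref{main thm C}.

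Condition (b) is immediate from the hypothesis: the inclusion $\Omega(P) \leq Z(P)$ gives $[Q_1, P] = [\Omega(P), P] = 1 = Q_0$.

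The only point that requires a short argument is condition (a), namely that $\Omega(P)$ is strongly $\F$-closed. Here I would first observe that $\Omega(P) \leq Z(P)$ makes $\Omega(P)$ abelian, so in this abelian $p$-group the subset of elements of order dividing $p$ is itself a subgroup. Combined with the tautological inclusion $\{x \in P : x^p = 1\} \subseteq \Omega(P)$, this yields the equality $\Omega(P) = \{x \in P : x^p = 1\}$. Now take any $x \in \Omega(P)$ and any $\F$-conjugate $y \in S$ of $x$. Strong $\F$-closure of $P$ places $y$ in $P$, and since $\F$-isomorphisms preserve orders of elements we have $y^p = 1$; hence $y \in \Omega(P)$, proving strong $\F$-closure.

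With both conditions verified, Theorem \ref{main thm C} yields $P \unlhd \F$. Since the argument used nothing about the ambient pair $(S, \F)$ beyond strong $\F$-closure of $P$, it applies uniformly to every saturated fusion system on every over $p$-group of $P$ in which $P$ is strongly closed, so $P$ is strongly resistant in saturated fusion systems. I do not anticipate a substantive obstacle; the proof is essentially a clean specialization of Theorem \ref{main thm C} to the shortest nontrivial normal series available under the hypothesis.
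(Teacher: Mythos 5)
Your proof is correct and is essentially the argument the paper intends: since $P$ is of odd type, $\Omega^*(P)=\Omega(P)$, and the two-term series $1\leq\Omega(P)$ satisfies conditions (a) and (b) of Theorem \ref{main thm C} (strong closure of $\Omega(P)$ following from $y\in P$ and $y^p=1$ for any $\F$-conjugate $y$, using that $\Omega(P)\leq Z(P)$ is abelian so all its elements have order dividing $p$), giving $P\unlhd\F$ for every admissible pair $(S,\F)$. No gaps.
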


We believe that Theorem \ref{main thm A} holds significant potential for generalizing numerous theorems whose proofs are based on the Alperin fusion theorem. As such, it may have applications that extend beyond those discussed in the current work. An interesting corollary of Theorem A, which merits separate mention, is the generalization of the Frobenius $p$-nilpotency theorem in finite groups. This corollary exemplifies the broader implications and reach of Theorem \ref{main thm A}, highlighting its potential for further developments in the study of group theory.

\begin{corollary}[A general version of Frobenius $p$-nilpotency theorem]
Let $G$ be a group and $S\in Syl_p(G)$, and $D$ be a strongly closed subgroup in $ S$ with respect to $G$.  Then $G$ is $p$-nilpotent if and only if $N_G(U)$ is $p$-nilpotent for all $U\leq D$ such that $C_D(U)\leq U$.
\end{corollary}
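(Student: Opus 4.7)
The forward implication is immediate: any subgroup of a $p$-nilpotent group is itself $p$-nilpotent (its intersection with $O_{p'}(G)$ is a normal $p$-complement), so in particular each $N_G(U)$ is $p$-nilpotent. The substantive work is the converse, and my plan is to set $\F=\F_S(G)$, so that $D$ is strongly $\F$-closed, and invoke Theorem \ref{main thm A} with $P=D$ via its ``moreover'' clause.

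The main step is to rule out $\F$-essential subgroups with respect to $D$. Let $Q$ be a proper subgroup of $D$ that is $\F$-centric with respect to $D$; then $C_D(Q)\le Q$, so by hypothesis $N_G(Q)$ is $p$-nilpotent. The normal $p$-complement $O_{p'}(N_G(Q))$ and the normal subgroup $Q$ of $N_G(Q)$ intersect trivially, hence $[Q,O_{p'}(N_G(Q))]\le Q\cap O_{p'}(N_G(Q))=1$, giving $O_{p'}(N_G(Q))\le C_G(Q)$. Therefore $Aut_{\F}(Q)=N_G(Q)/C_G(Q)$ is a $p$-group, and so is $Out_{\F}(Q)$. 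I next verify the short observation that no proper subgroup $H$ of a $p$-group $\Gamma$ can be strongly $X$-embedded for any nontrivial $X\le \Gamma$: picking a conjugate $U$ of $X$ inside $H$, if $N_\Gamma(U)\not\le H$ then any $g\in N_\Gamma(U)\setminus H$ puts $U\le H\cap H^g$, a nontrivial conjugate of a subgroup of $X$, violating condition (ii); hence $N_\Gamma(U)\le H$, and iterating the normalizer tower inside the $p$-group $\Gamma$ forces $\Gamma\le H$, a contradiction. Applied to $Out_{\F}(Q)$ with $X=Out_D(Q)=N_D(Q)/Q\ne 1$ (nontrivial because $Q<D$ in the $p$-group $D$ and $Q$ is $D$-centric), this rules out condition (iii), so no $Q$ is $\F$-essential with respect to $D$.

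By the ``moreover'' part of Theorem \ref{main thm A} we conclude $D\unlhd \F$, and the hypothesis at $U=D$ makes $N_G(D)$ $p$-nilpotent, whence $Aut_{\F}(D)=Aut_S(D)$ is a $p$-group. The main remaining obstacle is the last leg: upgrading $D\unlhd \F$ to $S\unlhd \F$, equivalently $G$ being $p$-nilpotent. My plan here is to pass to the saturated quotient fusion system $\F/D$ on $S/D$, lift any hypothetical $\F/D$-essential $\bar Q$ to a subgroup $Q\ge D$ of $S$, and exploit the restriction map $Aut_{\F}(Q)\to Aut_{\F}(D)$ together with the $p$-group structure established above to rule out such $\bar Q$; then $\F/D$ is nilpotent, and combined with $D\unlhd \F$ this yields $S\unlhd \F$, so $\F=\F_S(S)$ and $G$ is $p$-nilpotent.
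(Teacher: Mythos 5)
Your forward direction and the first half of the converse are correct, and they follow the route the paper intends: for a proper $Q\leq D$ that is $\F$-centric with respect to $D$ the hypothesis makes $N_G(Q)$ $p$-nilpotent, its normal $p$-complement centralizes $Q$, so $Aut_{\F}(Q)$ and hence $Out_{\F}(Q)$ are $p$-groups, and a $p$-group has no strongly $X$-embedded subgroup for nontrivial $X$ (your normalizer-tower argument is fine; alternatively, $Out_D(Q)\cap O_p(Out_{\F}(Q))=1$ from Proposition \ref{prop charac. of essentails}(b) is immediately violated when $Out_{\F}(Q)$ is a $p$-group). So there is no $\F$-essential subgroup with respect to $D$ and Theorem \ref{main thm A} gives $D\unlhd\F$.

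The last leg, however, has a genuine gap. From $N_G(D)$ being $p$-nilpotent you keep only that $Aut_{\F}(D)$ is a $p$-group, and you then hope to kill $\F/D$-essential subgroups through the restriction map $Aut_{\F}(Q)\to Aut_{\F}(D)$ for $Q\geq D$. That map cannot see $p'$-automorphisms of $Q$ which centralize $D$ but act nontrivially on $Q/D$, and the hypothesis of the corollary (which only concerns subgroups of $D$) gives no handle on them. Concretely, take $G=(D\times T)\rtimes C_q$ with $q\mid p-1$, $C_q$ centralizing $D$ and acting nontrivially on the $p$-group $T$: here $D$ is strongly closed, $D\unlhd \F_S(G)$, $Aut_{\F}(U)$ is a $p$-group for every $U\leq D$ with $C_D(U)\leq U$, and $Aut_{\F}(D)$ is a $p$-group, yet $\F/D$ is not nilpotent and $G$ is not $p$-nilpotent. (Of course $N_G(D)=G$ is not $p$-nilpotent, so the corollary is not contradicted -- but this shows your extracted data cannot suffice.) Also, the asserted implication ``$D\unlhd\F$ and $\F/D$ nilpotent $\Rightarrow S\unlhd\F$'' is false as stated, by the same example run with the roles of $D$ and $T$ reversed. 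The correct completion uses the full $p$-nilpotency of the group $N_G(D)$, not just its action on $D$: since $D\unlhd\F$ means $\F=N_{\F}(D)$, every $\F$-morphism $\phi:Q\to R$ extends to some $\overline\phi\in Hom_{\F}(QD,RD)$ with $\overline\phi(D)=D$; any $g\in G$ realizing $\overline\phi$ lies in $N_G(D)$, so $\F_S(G)=\F_S(N_G(D))$. As $N_G(D)$ is $p$-nilpotent (the case $U=D$ of the hypothesis), $\F_S(N_G(D))=\F_S(S)$, hence $\F_S(G)=\F_S(S)$ and $G$ is $p$-nilpotent by the classical Frobenius theorem. With this replacement the quotient system $\F/D$ is not needed at all.
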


\textbf{The organization of the paper:} In the subsequent section, we will introduce the notation and conventions employed in this work, along with some preliminary information about saturated fusion systems and coprime actions. This foundation will provide the necessary background for our study.

In Section 3, we will explore quasi and semi-saturated fusion systems, proving analogues of the Alperin fusion theorem for these classes of fusion systems (see Theorems \ref{thm Alperin 1} and \ref{thm Alperin 2}). We will obtain characterizations of essential subgroups in semisaturated fusion systems (see Proposition \ref{prop charac. of essentails} and Corollary \ref{cor equivalent def of essential}). Additionally, a natural correspondence between semisaturated and saturated fusion systems will be established, which will lead to the proofs of Theorems A, B, and D.

In Section 4, we will rely on the following equivalence:  \textit{A $p$-group $P$ is strongly resistant in saturated fusion systems if and only if $P$ is resistant in semisaturated fusion systems} (see Corollary \ref{cor correspondence}).

 Each class of $p$-groups appearing in Theorem C will be examined separately to demonstrate their resistance in semisaturated fusion systems. Subsequently, Theorem C will be established as a result of Corollary \ref{cor correspondence}. This structured approach will allow us to thoroughly investigate the properties and relationships between these classes of $p$-groups and fusion systems. 
\section{Preliminaries}

\subsection{Some basics related to fusions systems}

Let $\F$ be a fusion system on a $p$-group $P$ and $Q\leq P$. 
\begin{enumerate}

\item[$\bullet$] A subgroup $R$ of $P$ is called an \textbf{$\F$-conjugate} of $Q$, if there exists  $\phi\in Hom_{\F}(Q,R)$	such that $R=Q\phi $. The set of all $\F$-conjugates of $Q$ is \textbf{denoted} by $Q^{\F}$.

\item[$\bullet$] For $x\in P$, we set $x^{\F}=\{ x\phi \mid \phi \in Hom_{\F}(\langle x \rangle,P)  \}$. 

\item[$\bullet$] $Q$ is said to be \textbf{strongly $\F$-closed} if $x^{\F}\subseteq Q$ for all $x\in Q$.

\item[$\bullet$] We say that $Q$ is \textbf{fully $\F$-normalized} if $|N_P(Q)|\geq |N_P(R)|$ for every $\F$-conjugate $R$ of $Q$. Similarly, $Q$ is said to be  \textbf{fully $\F$-centralized}  if $|C_P(Q)|\geq |C_P(R)|$ for every $\F$-conjugate $R$ of $Q$. We can drop ``$\F$" if it is obvious from the context.

\item[$\bullet$] For $x\in S$, the conjugation map induced by $x$ is denoted by $c_x$.

\item[$\bullet$] Let  $\phi\in Hom_{\F}(Q,P)$. We define 
$$	N_{\phi}(Q)=\{x\in N_P(Q) \mid \exists y\in N_P(Q\phi) \textit{ such that } \phi^{-1}(c_x)\phi =c_y  \text{ on } Q\phi\}.$$ We also note that the standard notation for $N_{\phi}(Q)$ is $N_{\phi}$.

\item[$\bullet$] We say that $Q$ is \textbf{receptive} if $\phi:R\to Q$ is an $\F$-isomorphism, then $\phi$ extends to $\overline \phi \in Hom_{\F}(N_{\phi}(R),P)$.

\item[$\bullet$] We say that $Q$ is \textbf{$\F$-centric}, if $C_P(R)\leq R$ for all $R\in Q^{\F}$.

\item[$\bullet$] Let $G$ be a group and $D\leq S \leq G$. We say that $D$ is \textbf{strongly closed} in $S$ with respect to $G$ if $x^g\in D$ whenever $x^g \in S$ for some $x\in D$ and $g\in G$.  Note that if $S\in Syl_p(G)$, $D$ is strongly closed in $S$ with respect to $G$ if and only if $D$ is strongly $\F$-closed in $S$ where $\F=\F_S(G)$, which is the fusion system induced by $G$ on $S$.

\end{enumerate}

\begin{definition}
Let $\F$ be a fusion system on a $p$-group $P$. We say $\F$ is \textbf{saturated} provided that \begin{enumerate}[label=(\alph*)]
	\item $Aut_P(P)$ is a Sylow $p$-subgroup of $Aut_{\F}(P)$, and
	\item For every subgroup $Q$ of $P$ and every morphism $\phi\in Hom_{\F}(Q,P) $ such that $Q\phi$ is fully normalized extends to a morphism $ \overline \phi\in Hom_{\F}(N_{\phi}(Q),P)$.
\end{enumerate}	
\end{definition}

Let $Q\leq P$ be fully $\F$-normalized where $\F$ is saturated. Then it is well known that $Aut_{P}(Q)\in Syl_p(Aut_{\F}(Q))$.

 The following type of calculations are also standard and used frequently.
		
		
\begin{lemman}\label{iden2}
	Let $Q\leq P$, $s\in N_P(Q)$, $\phi\in Hom_{\F}(Q,P)$ and $\overline \phi\in Hom_{\F}( \langle Q,s\rangle,P)$ such that $\overline\phi$ is an extension of $\phi$. Then $\phi^{-1}c_s\phi=c_{s \overline\phi}$ on $Q\phi$.
\end{lemman}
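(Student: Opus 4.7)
The plan is to verify the identity pointwise by a direct computation. I will pick an arbitrary $q\in Q$, so $q\phi\in Q\phi$, and evaluate both sides of the claimed equality on $q\phi$. On the left, $c_{s\overline\phi}$ sends $q\phi$ to $(s\overline\phi)^{-1}(q\phi)(s\overline\phi)$. Since $\overline\phi$ extends $\phi$, I may rewrite $q\phi$ as $q\overline\phi$, and then, using that $\overline\phi$ is a homomorphism on $\langle Q,s\rangle$, the product collapses to $(s^{-1}qs)\overline\phi$.

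The critical input is the hypothesis $s\in N_P(Q)$, which guarantees $s^{-1}qs\in Q$. Thus $\overline\phi$ agrees with $\phi$ at $s^{-1}qs$, giving $(s^{-1}qs)\overline\phi = (s^{-1}qs)\phi$. On the right, evaluating $\phi^{-1}c_s\phi$ at $q\phi$ yields $(qc_s)\phi = (s^{-1}qs)\phi$, so the two sides coincide.

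There is no genuine obstacle here; the lemma is essentially a one-line verification once one observes that the extension property of $\overline\phi$ must be invoked twice: first to promote $\phi$ to $\overline\phi$ so that the homomorphism property can be applied to a product involving $s$, and then to revert to $\phi$ after $s^{-1}qs$ has been recognized as an element of $Q$. The only point requiring care is keeping track of the composition convention (maps acting on the right) and the convention for $c_s$ that is in force throughout the paper.
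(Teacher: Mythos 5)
Your verification is correct: evaluating both sides at $q\phi$ and using the extension property of $\overline\phi$ together with $s\in N_P(Q)$ gives $(s^{-1}qs)\phi$ in each case, which is exactly the standard computation the paper has in mind (it states the lemma without proof, calling it a standard calculation). Your attention to the right-action convention for $c_s$ and to where $\overline\phi$ versus $\phi$ is applied is precisely the only care the argument requires.
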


\subsection{Coprime actions}	

Let $A$ be a group acting on a group $G$ via automorphisms. We say that $A$ acts on $G$ \textbf{coprimely} if $gcd(|A|,|G|)=1$.

\begin{lemman}\label{coprime action}
Let $A$ be a group acting on a group $G$ coprimely. Then the following hold:
\begin{enumerate}
	\item[(a)] $G=[G,A]C_G(A)$. Moreover, if $G$ is abelian, then $G=[G,A]\times C_G(A)$.
	\item[(b)]  $[G,A,A]=[G,A]$.
	\item[(c)] Let $N$ be an $A$-invariant normal subgroup of $G$ and set $\overline G=G/N$. Then $C_{\overline G}(A)=\overline{C_G(A)}.$
	\item[(d)] Write $\overline G=G/\Phi(G)$. If $[\overline G, A]=1$, then $[G,A]=1$.
\end{enumerate}
\end{lemman}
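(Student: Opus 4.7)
The plan is to prove (c) first, since the inductive step of (a) needs it, and then derive (b) and (d) as short consequences of (a). Since $\gcd(|A|,|G|)=1$, one of $A,G$ has odd order (Feit--Thompson), so I may assume $G$ is solvable whenever solvability is needed.

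For (c), the inclusion $\overline{C_G(A)}\le C_{\overline G}(A)$ is immediate. For the reverse, let $H$ be the preimage in $G$ of $C_{\overline G}(A)$; then $H$ is $A$-invariant and $[H,A]\le N$, so each coset $hN$ ($h\in H$) is $A$-invariant. Since $|A|$ is coprime to $|N|=|hN|$, the action of $A$ on $hN$ has orbits of sizes dividing $|A|$ summing to $|N|$, forcing a fixed point: $hN\cap C_G(A)\ne\varnothing$, giving $h\in NC_G(A)$.

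For (a), I would induct on $|G|$. If there is a proper nontrivial $A$-invariant normal subgroup $N$, then by induction $N=[N,A]C_N(A)\le[G,A]C_G(A)$, and $\overline G=[\overline G,A]\cdot C_{\overline G}(A)$. Using (c) to rewrite the second factor as $\overline{C_G(A)}$, lifting yields $G=[G,A]\,N\,C_G(A)=[G,A]C_G(A)$. If no such $N$ exists, solvability forces $G$ to be an elementary abelian $q$-group with $q\nmid|A|$, and Maschke gives the direct sum $G=[G,A]\oplus C_G(A)$ as an $\mathbb F_q[A]$-module. For the ``moreover'' clause for a general abelian $G$, the averaging map $\pi(g)=\bigl(\prod_{a\in A}g^{a}\bigr)^{m}$, where $m$ is an inverse of $|A|$ modulo $\exp(G)$, is a projection onto $C_G(A)$ that vanishes on every generator $[h,a]=h^{-1}h^{a}$ of $[G,A]$; hence $[G,A]\cap C_G(A)=1$.

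For (b), apply (a) to the $A$-invariant subgroup $H=[G,A]$ to get $H=[H,A]C_H(A)$. Using the identity $[hc,a]=[h,a]^{c}[c,a]=[h,a]^{c}$ for $c\in C_H(A)$, every generator of $[G,A]$ is a $G$-conjugate of an element of $[H,A]=[G,A,A]$; since $[G,A,A]$ is characteristic in the normal subgroup $[G,A]\trianglelefteq G$, it is itself normal in $G$, so $[G,A]\le[G,A,A]$, and the reverse inclusion is trivial. For (d), $\Phi(G)$ is characteristic hence $A$-invariant; the hypothesis $[\overline G,A]=1$ says $[G,A]\le\Phi(G)$, so (a) gives $G=[G,A]C_G(A)\le\Phi(G)C_G(A)$, and since $\Phi(G)$ consists of non-generators, $G=C_G(A)$.

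The only real subtlety is the interleaving of the statements: (a) is used as the workhorse for (b) and (d), but its own inductive step genuinely needs (c), so the natural statement order is not the order of proof. No step requires a delicate argument beyond the standard coprime-action toolkit.
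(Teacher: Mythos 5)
The paper does not prove this lemma at all: it simply cites Isaacs' \emph{Finite Group Theory} (Corollaries 3.28, 3.29, Lemmas 4.28, 4.29, Theorem 4.34), so your proposal is being measured against the standard proofs. The decisive gap is in your proof of (c). You argue that since the coset $hN$ is $A$-invariant and $\gcd(|A|,|N|)=1$, the orbit sizes of $A$ on $hN$ divide $|A|$ and sum to $|N|$, ``forcing a fixed point.'' That counting argument is only valid when $|A|$ is a prime power: for composite $|A|$ the nontrivial orbit sizes need not share a common prime divisor, so nothing prevents, say, $|A|=6$ and orbits of sizes $2$, $3$ and five orbits of size $6$, summing to $35$, which is coprime to $6$. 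The fixed point does exist, but proving it in general requires either an induction reducing to $A$ of prime order (using solvability of $A$) or Glauberman's lemma via Schur--Zassenhaus, with solvability of $A$ or $N$ supplied by Feit--Thompson. Since your proof of (a) feeds on (c), and (b) and (d) feed on (a), this gap propagates through the whole lemma. Relatedly, your opening reduction is misstated: Feit--Thompson gives that \emph{one} of $A$, $G$ is solvable, not that you ``may assume $G$ is solvable''; in the base case of (a), if $G$ has no proper nontrivial $A$-invariant normal subgroup, $G$ could be a nonabelian simple group acted on by a solvable $A$, and the elementary-abelian/Maschke step does not apply. (That case is in fact trivial for a different reason you did not use: $[G,A]$ is itself an $A$-invariant normal subgroup, so it equals $1$ or $G$, and either way $G=[G,A]C_G(A)$.)

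There is also a smaller slip in (b): you apply (a) to $H=[G,A]$ and decompose $H=[H,A]C_H(A)$, but the generators of $[G,A]$ are the commutators $[g,a]$ with $g$ ranging over all of $G$, which that decomposition does not reach. The standard repair is to apply (a) to $G$ itself, write $g=hc$ with $h\in[G,A]$ and $c\in C_G(A)$, and compute $[g,a]=[h,a]^{c}=[h^{c},a]\in[[G,A],A]$, using that $c$ centralizes $A$ so conjugation by $c$ commutes with the $A$-action; your alternative justification that $[G,A,A]$ is characteristic in $[G,A]$ is unsupported (an automorphism of $[G,A]$ need not respect the $A$-action) and is not needed. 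The ``moreover'' clause of (a) via the averaging projection, and the deduction of (d) from (a) and the non-generator property of $\Phi(G)$, are fine as written.
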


\begin{proof}[\textbf{Proof.}]
See \cite[Corollaries 3.28 and 3.29]{Isc} for (c) and (d), and see \cite[Lemmas 4.28 and 4.29, Theorem 4.34]{Isc} for (a) and (b).
\end{proof}

Recall that a $p$-group $P$ is of odd type if  $P$ has no subgroup isomorphic to $Q_8$ and $\Gamma$ ($\Gamma$ denote group isomorphic to the nontrivial semidirect product of $\mathbb Z_4$ by $\mathbb Z_4$). In the case that $p$ is odd, $P$ is always of odd type. Set \[\Omega^*(P)=\begin{cases} 
	\Omega(P) & \text{if $P$ is off odd type} \\
	\Omega_2(P) & otherwise
	
\end{cases}
\]
\begin{lemman}\label{coprime action 2}
	Let $A$ be a group acting on a $p$-group $P$ coprimely. Then the following hold:
	\begin{enumerate}
		\item[(a)] If $P$ is cyclic and $C_P(A)\neq 1$, then $[P,A]=1$, that is, $C_P(A)=P$.
		\item[(b)] If $[\Omega^*(P),A]=1$, then $[P,A]=1$.
	
	\end{enumerate}
\end{lemman}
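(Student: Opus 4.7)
For part (a), observe that a cyclic $p$-group is abelian, so Lemma \ref{coprime action}(a) supplies the internal decomposition $P=[P,A]\times C_P(A)$. In a cyclic $p$-group the subgroup lattice is a chain, so any two nontrivial subgroups intersect nontrivially and a nontrivial internal direct product decomposition is impossible. Since $C_P(A)\ne 1$ by hypothesis, the complementary factor must vanish, giving $[P,A]=1$.

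For part (b), I would proceed by induction on $|P|$. The first step is a reduction to the case $[P,A]=P$. Set $H=[P,A]$ and observe that $\Omega^*(H)\le \Omega^*(P)$ in all cases (the odd-type property is inherited by subgroups, and elements of order dividing $p$ or $p^2$ in $H$ have the same property in $P$), so $[\Omega^*(H),A]\le[\Omega^*(P),A]=1$. If $H<P$, the induction hypothesis applied to $H$ gives $[H,A]=1$; but by Lemma \ref{coprime action}(b), $[H,A]=[P,A,A]=[P,A]=H$, forcing $H=1$. Thus we may assume $[P,A]=P$ and seek a contradiction (assuming $P\ne 1$).

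Next I would show that $A$ centralizes $Z(P)$. Since $Z(P)$ is abelian, Lemma \ref{coprime action}(a) gives $Z(P)=[Z(P),A]\times C_{Z(P)}(A)$, and the inclusion $\Omega([Z(P),A])\le\Omega(Z(P))\le\Omega^*(P)\le C_P(A)$ combined with the triviality of the intersection $[Z(P),A]\cap C_{Z(P)}(A)$ forces $\Omega([Z(P),A])=1$, whence $[Z(P),A]=1$. Now choose a characteristic central subgroup $Z_1\le\Omega(Z(P))$ of order $p$ (so $Z_1$ is $A$-invariant and centralized by $A$) and pass to $\bar P=P/Z_1$. If one can establish $[\Omega^*(\bar P),A]=1$, then the induction hypothesis applied to $\bar P$ yields $[\bar P,A]=1$, i.e.\ $[P,A]\le Z_1$, and a second application of Lemma \ref{coprime action}(b) gives $[P,A]=[P,A,A]\le[Z_1,A]=1$, contradicting the assumption $[P,A]=P\ne 1$.

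The main obstacle is verifying $[\Omega^*(\bar P),A]=1$. A lift $x\in P$ of $\bar x\in\Omega(\bar P)$ satisfies only $x^p\in Z_1$, i.e.\ $x\in\Omega_2(P)$, and this need not belong to $\Omega^*(P)$ when $P$ is of odd type. One must exploit the structural features of $p$-groups that underlie the very definition of $\Omega^*$: for $p$ odd, the regular $p$-group identities applied to $\langle x, x^a\rangle$ (whose $p$-th power lies in the $A$-fixed central subgroup $Z_1$) force $[x,a]\in Z_1$, while for $p=2$ the hypothesis that $P$ contains no subgroup isomorphic to $Q_8$ or $\Gamma$ plays the analogous role, and the promotion of $\Omega$ to $\Omega_2$ in the definition of $\Omega^*$ is exactly what compensates for the weaker regularity available at the prime $2$. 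This structural step is essentially the classical theorem on $p'$-automorphisms centralizing $\Omega_1$ (resp.\ $\Omega_2$) and is the crux of the proof.
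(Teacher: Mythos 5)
Your part (a) is correct and is essentially the paper's argument: the paper simply invokes Lemma \ref{coprime action}(a), and your observation that a nontrivial direct decomposition is impossible in a cyclic $p$-group (chain lattice) with $C_P(A)\neq 1$ is exactly the intended reasoning.

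For part (b) there is a genuine gap. The paper does not prove this statement from first principles at all: it quotes \cite[Corollary D]{myk}, which is precisely the assertion that a coprime action trivial on $\Omega^*(P)$ is trivial. Your inductive skeleton (reduce to $[P,A]=P$, show $[Z(P),A]=1$, pass to $\bar P=P/Z_1$) is sound as far as it goes, but the decisive step $[\Omega^*(\bar P),A]=1$ is exactly where all the content lies, and you leave it unproved. Moreover, it cannot be dismissed as ``essentially the classical theorem'': the classical results say that a $p'$-automorphism trivial on $\Omega_1(P)$ is trivial for $p$ odd, and trivial on $\Omega_2(P)$ is trivial for $p=2$; but the case $p=2$ with $P$ of odd type, where triviality on $\Omega_1(P)$ alone is claimed to suffice, is not classical --- it is the main point of the cited recent result \cite{myk}. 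Your induction also has a structural obstruction in that case: the odd-type hypothesis (no subgroup $\cong Q_8$ or $\Gamma$) is not obviously inherited by the quotient $\bar P=P/Z_1$ (indeed $\Gamma$ itself has $Q_8$ as a quotient), so you cannot even assert that $\Omega^*(\bar P)=\Omega(\bar P)$, and the relation between lifts of $\Omega(\bar P)$ and $\Omega^*(P)$ is precisely the hard point. Likewise, the appeal to ``regular $p$-group identities applied to $\langle x,x^a\rangle$'' for odd $p$ is unsupported, since that subgroup need not be regular; the standard proofs go through critical subgroups or a more careful induction. In short: (a) is fine, but (b) as written is an outline whose crux is exactly the theorem being cited by the paper, so the proposal does not constitute a proof.
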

\begin{proof}[\textbf{Proof.}]
(a) follows from Lemma \ref{coprime action}(a), and (b) follows from  \cite[Corollary D]{myk}.
\end{proof}

\begin{lemman}\label{coprime action 3}
Let $P$ be a $p$-group and $A\leq Aut(P)$. Then the following hold:

\begin{enumerate}
	\item[(a)] $C_A(\Omega^*(P))\leq O_p(A)$ and $C_A(P/\Phi(P))\leq O_p(A)$.
	\item[(b)] Suppose that $A$ stabilises a subgroup series $1=S_0\leq S_1 \leq S_2\leq ... \leq S_{n-1}\leq S_n=\Omega^*(P)$. If $B\leq A$ such that $[S_{i+1},B]\leq S_i$ for $i=0,1,..,n-1$, then $B\leq O_p(A)$.
	
\end{enumerate}

\end{lemman}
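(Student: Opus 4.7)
The plan for (a) is to use that $\Omega^*(P)$ and $\Phi(P)$ are characteristic in $P$, hence $A$-invariant, so both $C_A(\Omega^*(P))$ and $C_A(P/\Phi(P))$ are normal subgroups of $A$ (being kernels of the induced actions on $\Omega^*(P)$ and $P/\Phi(P)$ respectively). It therefore suffices to show they are $p$-groups; then containment in $O_p(A)$ is automatic. Given a $p'$-element $c$ in one of these centralisers, the cyclic group $\langle c \rangle$ acts coprimely on $P$ while centralising $\Omega^*(P)$ (respectively $P/\Phi(P)$), so Lemma \ref{coprime action 2}(b) (respectively Lemma \ref{coprime action}(d)) forces $[P,\langle c\rangle]=1$, i.e.\ $c=1$ since $A\leq Aut(P)$. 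A $p$-part/$p'$-part decomposition of an arbitrary element of the centraliser then shows every element has $p$-power order, so each centraliser is a $p$-group.

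For (b), I would replace $B$ by the full stabiliser $K=\{a\in A\mid [S_{i+1},a]\leq S_i\ \text{for all }i\}$ of the series in $A$. Clearly $B\leq K$, and a direct calculation with the automorphism action (using the $A$-invariance of each $S_i$) shows $K\trianglelefteq A$. The target is thus $K\leq O_p(A)$, i.e.\ to show $K$ is a $p$-group.

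The main step will be to verify that $K$ has no nontrivial $p'$-element. Given $k\in K$ of $p'$-order, I would prove by induction on $i$ that $[S_i,\langle k\rangle]=1$: the base $i=0$ is trivial, and for the inductive step the hypothesis $[S_{i+1},k]\leq S_i$ together with the inductive assumption $[S_i,\langle k\rangle]=1$ yields $[S_{i+1},\langle k\rangle,\langle k\rangle]=1$; since $\langle k\rangle$ acts coprimely on the $p$-group $S_{i+1}$, Lemma \ref{coprime action}(b) gives $[S_{i+1},\langle k\rangle]=[S_{i+1},\langle k\rangle,\langle k\rangle]=1$. Taking $i=n$ places $k$ in $C_A(\Omega^*(P))$, which lies in $O_p(A)$ by (a), forcing $k=1$. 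Decomposing an arbitrary element of $K$ into its commuting $p$-part and $p'$-part (both of which are powers and so lie in $K$) then shows $K$ is a $p$-group, completing the proof.

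There is no real obstacle here; the only mildly delicate point is the coprimality bookkeeping inside the induction, which works because $k$ is automatically a coprime-operator group on the $p$-subgroups $S_{i+1}$ and hence Lemma \ref{coprime action}(b) is applicable at each step.
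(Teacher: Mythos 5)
Your proof is correct and follows essentially the same route as the paper: part (a) is exactly the intended combination of Lemma \ref{coprime action 2}(b) and Lemma \ref{coprime action}(d) applied to a $p'$-element, and in (b) your series stabiliser $K$ is the paper's normal subgroup $N=\bigcap_{i}C_A(S_{i+1}/S_i)$, with your induction up the series being just a repackaging of the paper's iterated commutator $[\Omega^*(P),a,\ldots,a]=1$ together with Lemma \ref{coprime action}(b), followed by the same appeal to part (a).
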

\begin{proof}[\textbf{Proof.}]
	$(a)$ directly follows from Lemma \ref{coprime action 2}(b) and Lemma \ref{coprime action}(d).
	
	$(b)$ Since each $S_i$ is $A$-invariant, $N=\bigcap_{i=0}^{n-1} C_A(S_{i+1}/S_i)$ is a normal subgroup of $A$ that contains $B$. Let $a \in N$ of order coprime to $p$. Since $[\Omega^*(P),a,...,a]=1$, we obtain that $[\Omega^*(P),a]=1$ by Lemma \ref{coprime action}(b). It follows that $a$ is $p$-element by part (a), and so $a=1$. Hence, we obtain that $B\leq N \leq O_p(A)$ as desired.
\end{proof}

\section{Main Results}

In the first subsection, we focus on the foundational aspects of quasi and semi-saturated fusion systems. We define these concepts and establish their fundamental properties, which will be essential for our later discussions. While our primary focus will be on semisaturated fusion systems, we introduce the concept of quasi-saturated fusion systems as a means to develop a more general understanding of fusion systems. This will allow us to better comprehend the relationship between semisaturated and saturated fusion systems, and to explore the potential for further generalizations in the future.

It is important to note that, while our treatment of quasi-saturated fusion systems is limited in this article, we believe that the concept is valuable for the development of a more comprehensive theory. A more refined definition of semi-saturation could lie somewhere between the current definitions of quasi and semisaturated fusion systems, but our current focus on semi-saturation is well-suited for exploring its applications in saturated fusion systems. The use of quasi-saturated fusion systems in our proofs not only simplifies the arguments but also helps to break them down into more manageable components.

In the second subsection, we shift our attention to the proofs of Theorems A, B, and D, which involve saturated fusion systems as mentioned in the introduction. By establishing a natural correspondence between semisaturated and saturated fusion systems, we are able to demonstrate the broader applicability and versatility of our results. As we explore the essential properties of these systems, we will also showcase how the theorems proven in this article are more general than what is stated in the introduction.

After proving the Alperin fusion theorem for quasi-saturated fusion systems, we demonstrate that semisaturated fusion systems are indeed a subset of quasi-saturated fusion systems. This enables us to examine the properties of essential subgroups of semisaturated fusion systems more effectively. From this point onward, our focus will be on semisaturated and saturated fusion systems, leaving the exploration of potential examples of quasi-saturated fusion systems that are not semisaturated for future research.

In the final subsection, we will provide a variety of instructive examples, one of which demonstrates the existence of semisaturated fusion systems that are not saturated.

In summary, this section aims to establish a solid foundation for the study of semisaturated fusion systems and its natural correspondence with saturated one while also highlighting the potential for more general results involving quasi-saturated fusion systems.

\subsection{Quasi and Semi-saturated fusion systems}

\begin{definition}Let $\F$ be a fusion system on a $p$-group $P$ and $R< P$. We say that $R$ is \textbf{reproductive} and write $R\in \F^r$ if
	
	\begin{enumerate}
		\item[(a)] $R$ is receptive.
		\item [(b)] For each $Q\in R^{\F}$, there exists $\psi \in Hom_{\F}(Q,R)$ such that $N_{\psi}(Q)>Q$.
		
	\end{enumerate}
\end{definition}

\begin{definition} \label{def quasisaturated}Let $\F$ be a fusion system on a $p$-group $P$. We say that $\F$ is \textbf{quasisaturated} if each proper subgroup of $P$ has an $\F$-conjugate lying in $ \F^r.$  
\end{definition}

\begin{definition}\label{def essential subgroup}
Let $\F$ be a fusion system on a $p$-group $P$. A subgroup $Q$ of $P$ is called \textbf{$\F$-essential} if the following hold:

\begin{enumerate}
\item[a)] $Q\in \F^r$, that is, $Q$ is reproductive,
\item[b)] $H_Q$ is proper in $Aut_{\F}(Q)$ where $H_Q=\langle \phi\in Aut_{\F}(Q) \mid N_{\phi}(Q)>Q \rangle$.
\end{enumerate}

\end{definition}
Note that if the fusion system $\F$ is obvious from the context, we may simply say that $Q$ is an essential subgroup of $P$.

Let $\mathfrak V$ be a collection of subgroups of $P$. A subset $\mathfrak U $ of $ \mathfrak V$ is called an $\F$-\textbf{conjugacy representative} of $\mathfrak V$ if each member of $\mathfrak V$ has exactly one $\F$-conjugate in $\mathfrak U$.

\begin{definition}\label{def essential}
	Let $\F$ be a fusion system on a $p$-group $P$. A subset $\mathfrak{E}$ of $\F^r$ is called an \textbf{essential collection} if there exists an $\F$-conjugacy representative $\mathfrak{U}$ of $\F^r$ such that
	$$\mathfrak{E}=\{Q\in \mathfrak{U}\mid Q \textit{ is $\F$-essential}  \}.$$
\end{definition}

\begin{theoremn}\label{thm Alperin 1}
	Let $\F$ be a quasisaturated fusion system on a $p$-group  $P$ and $\mathfrak E$ be an essential collection. Let $Q,R \leq P$ and  $\psi:Q\to R$ be an $\F$-isomorphism. Then there exists
	
	\begin{enumerate}
		\item[(a)] a sequence of isomorphic subgroups $Q = Q_0, Q_1, . . . , Q_{n} = R$ of $P$,
		
		\item[(b)] a sequence of subgroups $S_1, S_2, . . . , S_n$ of $P$ where $S_i\in \mathfrak E $ or $S_i=P$ such that $Q_{i-1},Q_i\leq S_i$ for $i=1,\ldots, n$,
		
		\item[(c)] $\psi_i\in Aut_{\F}(S_i)$ such that $( Q_{i-1})\psi_i=Q_i$ and $\psi=(\psi_1|_{Q_1})\circ(\psi_2|_{Q_2}) \circ \ldots \circ (\psi_n|_{Q_n})$.
	\end{enumerate}
\end{theoremn}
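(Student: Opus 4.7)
The plan is to argue by induction on the index $[P:Q]$, with the trivial base case $Q=P$ handled by taking $n=1$, $S_1 = P$, and $\psi_1 = \psi$. For the inductive step, assume $Q < P$ and that the conclusion is known for every $\F$-isomorphism whose domain has strictly larger order than $Q$.

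Since $\F$ is quasisaturated, the $\F$-conjugacy class of $Q$ meets $\F^r$, and I would pick the unique representative $T$ of this class in the conjugacy representative $\mathfrak U$ used to define $\mathfrak E$. Applying the reproductive property (b) of $T$ once with the conjugate $Q$ and once with $R$, I obtain morphisms $\alpha \in Hom_{\F}(Q,T)$ and $\beta \in Hom_{\F}(R,T)$ such that $N_\alpha(Q) > Q$ and $N_\beta(R) > R$. Receptivity of $T$ then extends these to $\bar\alpha \in Hom_{\F}(N_\alpha(Q), P)$ and $\bar\beta \in Hom_{\F}(N_\beta(R), P)$, whose domains strictly exceed $Q$ and $R$ in order. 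The induction hypothesis therefore furnishes factorizations of $\bar\alpha$ and $\bar\beta$ of the desired shape, and restricting these step-by-step to $Q$ and $R$ respectively yields admissible factorizations of $\alpha$ and $\beta^{-1}$. Since $\psi = \beta^{-1} \circ \gamma \circ \alpha$ for $\gamma := \beta \psi \alpha^{-1} \in Aut_{\F}(T)$, it remains only to decompose the middle piece $\gamma$.

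Here I would split into two cases according to whether $T$ is essential. If $T$ is $\F$-essential, then $T \in \mathfrak E$ by Definition \ref{def essential}, and $\gamma$ is already a single valid step with $S_i = T$. Otherwise the definition of an essential subgroup forces $H_T = Aut_{\F}(T)$, so $\gamma$ admits a factorization $\gamma = \delta_1 \cdots \delta_k$ with each $\delta_i \in Aut_{\F}(T)$ satisfying $N_{\delta_i}(T) > T$. By receptivity, each $\delta_i$ extends to $\bar\delta_i \in Hom_{\F}(N_{\delta_i}(T), P)$, and the induction hypothesis applied to $\bar\delta_i$ (followed by restriction to $T$) gives a factorization of $\delta_i$, and hence of $\gamma$, of the required shape.

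The main obstacle I anticipate is the bookkeeping required to glue the pieces coming from $\alpha$, $\gamma$ and $\beta^{-1}$ into a single chain $Q = Q_0, Q_1, \ldots, Q_n = R$ with the containments $Q_{i-1}, Q_i \leq S_i$ at every step. The essential point is that each step in the factorization of $\bar\alpha$ is an automorphism $\sigma \in Aut_{\F}(S)$ with $S \in \mathfrak E \cup \{P\}$ containing the relevant intermediate subgroup, and restricting $\sigma$ to any subgroup of $S$ remains an $\F$-morphism between subgroups of $S$; hence the successive images of $Q$ under the restricted maps serve as the required $Q_i$, and the containments are automatic. The only other point that requires care is ensuring the extensions $\bar\alpha, \bar\beta, \bar\delta_i$ actually exist with the claimed codomain in $P$, but this is precisely the content of part (a) of the definition of reproductive, so presents no real obstacle.
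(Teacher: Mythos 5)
Your argument is correct and is essentially the paper's own proof: induction on $|P:Q|$, reproductivity plus receptivity to push the problem into a strictly larger domain, and the essential/non-essential dichotomy for decomposing automorphisms of the chosen representative in $\mathfrak U$. The only cosmetic difference is that you map both $Q$ and $R$ into the representative $T\in\mathfrak U$ and decompose the middle automorphism $\gamma\in Aut_{\F}(T)$, whereas the paper first treats the case $R\in\mathfrak U$ and then reduces the general case to it by composing with a morphism $R\to R^*\in\mathfrak U$; the ingredients and the inductive mechanism are identical.
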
	
	\begin{proof}[\textbf{Proof.}]
		We proceed by induction on $|P:Q|$. When $Q=P$, the claim is obviously correct, and so assume $Q<P$. There exists an $\F$-conjugacy representative  $\mathfrak{U}$  of $\F^r$ such that $\mathfrak{E}=\{Q\in \mathfrak{U}\mid Q  \textit{ is essential}   \}$ by Definition \ref{def essential}.
		
		Let's first consider the case that $R\in  \mathfrak{U}$. Then there exists $\phi \in Aut_{\F}(R)$ such that $N_{\varphi}(Q)>Q$ where $\varphi=\psi\phi$ as $R$ is reproductive. We also see that $\varphi$ extends to $\overline \varphi\in Hom_{\F}(N_{\varphi}(Q),P)$ as $R$ is receptive. By induction applied to $N_{\varphi}(Q)$, we obtain that $\overline \varphi$ has such a decomposition, and so does $\varphi$. It follows that $\psi$ has such a decomposition if $\phi$ does. If $R$ is essential, then $R\in \mathfrak{E}$, and so we are done. Then we may assume that $R$ is not essential, that is, $H_R=Aut_{\F}(R)$ where $H_R$ is defined as in Definition \ref{def essential}. Thus, $Aut_{\F}(R)$ is generated by automorphisms $\phi_i$ for $i\in I$ such that $N_{\phi_i}(R)>R$. Moreover, $\phi_i$ extends to $ \overline \phi_i \in Hom_{\F}(N_{\phi_i}(R),P)$ as $R$ is receptive.  The inductive argument yields that $\overline \phi_i$ for $i\in I$ have such decompositions, and hence each $\phi_i$ has such a decomposition. It follows that $\phi$ has the desired decomposition as well, which completes the proof for this part.
		
		Now if $R\notin \mathfrak{U}$, then there exists $R^*\in R^{\F}$ such that $R^*\in \mathfrak{U}$ as $\F$ is quasisaturated and $\mathfrak{U}$ is an $\F$-conjugacy representative of $\F^r$.  Let $f\in Hom_{\F}(R,R^*).$ It follows that both $\psi f$ and $f$ have such decompositions by the previous paragraph, and so $\psi f\circ f^{-1}=\psi$ has such a decomposition as well.
	\end{proof}

\begin{lemman}\label{lem ess imp f-centric}
	Let $\F$ be a quasisaturated fusion system on a $p$-group $P$. Then each $\F$-essential subgroup of $P$ is $\F$-centric.
\end{lemman}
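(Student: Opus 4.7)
The plan is to split the proof into two parts: first establish $C_P(Q)\leq Q$ for the essential subgroup $Q$ itself, and then transport this conclusion to every $\F$-conjugate $R\in Q^\F$ using the reproductivity and receptivity of $Q$ supplied by Definition \ref{def essential subgroup}.

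For the first part I would argue by contradiction. Suppose $C_P(Q)\not\leq Q$, so that $C_P(Q)Q$ properly contains $Q$. The key observation is that every $x\in C_P(Q)$ lies in $N_\phi(Q)$ for \emph{every} $\phi\in Aut_\F(Q)$: since $c_x|_Q=\mathrm{id}_Q$, the composite $\phi^{-1}c_x\phi$ is the identity on $Q\phi=Q$, and this coincides with $c_1$ for $1\in N_P(Q)$. Combined with the standard inclusion $Q\leq N_\phi(Q)$ (witnessed, for $x\in Q$, by $y=x\phi\in Q\phi$), this forces $N_\phi(Q)\geq C_P(Q)Q>Q$ for every $\phi\in Aut_\F(Q)$, whence $H_Q=Aut_\F(Q)$. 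This contradicts condition (b) of Definition \ref{def essential subgroup}, so $C_P(Q)\leq Q$.

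For the second part, fix $R\in Q^\F$. Since $Q$ is reproductive, there is an $\F$-isomorphism $\psi:R\to Q$ with $N_\psi(R)>R$; since $Q$ is receptive, $\psi$ extends to $\overline\psi\in Hom_\F(N_\psi(R),P)$. Given $x\in C_P(R)$, the same trivial verification (with $y=1$) places $x$ in $N_\psi(R)$. Lemma \ref{iden2} then gives $c_{x\overline\psi}=\psi^{-1}c_x\psi=\mathrm{id}_Q$ on $Q$, so $x\overline\psi\in C_P(Q)$; by the first part this lies in $Q=R\overline\psi$, and injectivity of $\overline\psi$ forces $x\in R$. There is no substantive obstacle here: the entire argument pivots on the single observation that centralising elements automatically ``extend'' in the $N_\phi$ sense, which is precisely what essentiality forbids. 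The only thing requiring care is to invoke reproductivity in the right direction, producing an isomorphism $R\to Q$ so that the receptivity of $Q$ can be used to extend it.
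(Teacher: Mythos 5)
Your argument is correct, and it tracks the paper's proof in its first half: the paper likewise observes that properness of $H_Q$ yields some $\phi\in Aut_{\F}(Q)$ with $N_{\phi}(Q)=Q$, and since $C_P(Q)\leq N_{\phi}(Q)$ always holds, this gives $C_P(Q)\leq Q$ (your contradiction argument is just the contrapositive phrasing of the same observation). Where you diverge is in handling the conjugates: the paper does not transport centralizers by hand, but instead cites the standard fact that a receptive subgroup is fully centralized (\cite[Proposition 4.15]{Crv}) and then concludes centricity from $C_P(Q)\leq Q$ together with full centralization. Your second half replaces that citation by a direct, self-contained argument: for $R\in Q^{\F}$ you note $C_P(R)\leq N_{\psi}(R)$ automatically, extend $\psi\colon R\to Q$ via receptivity, and use Lemma \ref{iden2} to push $C_P(R)$ into $C_P(Q)\leq Q$, forcing $C_P(R)\leq R$ by injectivity. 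This is essentially an inline proof of the content of the cited proposition, so it buys independence from the external reference at the cost of a little extra work; note also that you use slightly more than needed, since the clause $N_{\psi}(R)>R$ from reproductivity plays no role in your transport step --- any $\F$-isomorphism $R\to Q$ would do, because $C_P(R)\leq N_{\psi}(R)$ holds for every such $\psi$. Both routes are valid and rest on the same two pillars (essentiality forces $C_P(Q)\leq Q$; receptivity controls the conjugates).
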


\begin{proof}[\textbf{Proof.}]
Let $Q$ be an essential subgroup of $P$. Then $H_Q$ is proper in $Aut_{\F}(Q)$, and so there exists $\phi \in Aut_{\F}(Q)$ such that $N_{\phi}(Q)=Q$. Since $C_P(Q)\leq N_{\phi}(Q)$, we get that $C_P(Q)\leq Q$. Moreover, $Q\in \F^r$, and so $Q$ is receptive, which yields that $Q$ is fully centralized (see \cite[Proposition 4.15]{Crv}). It then follows that $Q$ is $\F$-centric as desired. 
\end{proof}
 
\begin{definition}\label{def semisaturated}
	Let $P$ be a $p$-group and $\E$ be a fusion system on $P$. We say that $\E$ is \textbf{semisaturated} if there exists a over group $S\geq P$ and over fusion system $\F\geq \E$ such that
	
	\begin{enumerate}
		\item[(a)] $\F$ is saturated.
		\item[(b)]$P$ is strongly $\F$-closed.
		\item[(c)] $\E=\F_{\mid \leq P}$.
	\end{enumerate}
\end{definition}
In the above case, we say that $\E$ is a semisaturated fusion system induced by $(\F,S).$
Let $R\leq P$. We say that $R$ is fully $\F$-normalized if $|N_S(R)|\geq |N_S(Q)|$ for all $Q\in R^{\F}$, and $R$ is fully $\E$-normalized if $|N_P(R)|\geq |N_P(Q)|$ for all $Q\in R^{\E}$. Similarly, we shall use several wellknown definitions with ``with respect to $\E$ or $\F$" to make a distinction about whether the considered structure is $P$ with $\E$ or $S$ with $\F$. However, if the considered structure is obvious from the context, we may remove these adds. Note also that saturated fusion systems are also semisaturated as $\F=\F_{\mid \leq S}$.

\begin{lemman}\label{lem fully normalized w.r.t F}
	Let $\E$ be a semisaturated fusion system on a $p$-group $P$ induced by $(\F,S).$ Let $R\leq P$ and let $R$ be a fully $\F$-normalized. Then the following hold:
	
	\begin{enumerate}
		
		\item[(a)]  $R$ is a receptive subgroup of $P$ with respect to $\E$.
		\item[(b)] For each $Q\in R^{\E}$, there exists $\phi\in Hom_{\E}(Q,R)$ such that $N_{\phi}(Q)=N_P(Q)$ with respect to $\E$.
		
		\item[(c)] $R$ is fully $\E$-normalized.
	
	\end{enumerate}
	
	\begin{proof}[\textbf{Proof.}]
		
		$(a)$ Since $R$ is fully normalized  with respect to $\F$, it is also receptive with respect to $\F$. It then follows that $R$ is also receptive with respect to $\E.$
		
		$(b)$ Let $Q\in R^{\E}.$ Since $R$ is fully $\F$-normalized, there exists $\phi\in Hom_{\F}(Q,R)$ such that $N_{\phi}(Q)=N_S(Q)$ with respect to $\F$. On the other hand, $Hom_{\F}(Q,R)=Hom_{\E}(Q,R)$ as $Q,R\leq P.$ It then follows that $N_{\phi}(Q)=P\cap N_S(Q)=N_P(Q)$ with respect to $\E$.
		
		$(c)$ Let $Q\in R^{\E}$ such that $Q$ is fully $\E$-normalized. By part $(b)$, we can pick $\phi\in Hom_{\E}(Q,R)$ such that  $N_{\phi}(Q)=N_P(Q)$. Since $R$ is receptive by part $(a)$, $\phi$ extends to $\overline \phi \in Hom_{\E}(N_P(Q),P)$. Then we get that $\overline \phi (N_P(Q))\leq N_P(R)$.
		Since $Q$ is fully $\E$-normalized, we get that $R$ is also fully $\E$-normalized, which also yields that $\overline \phi (N_P(Q))= N_P(R).$
		
	\end{proof}
	
\end{lemman}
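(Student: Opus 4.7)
The plan is to derive all three parts from the saturation of $\F$ together with the fact that $P$ is strongly $\F$-closed in $S$, by carefully distinguishing the $\F$-versions and $\E$-versions of $N_\phi$, $N_{(-)}$, and ``fully normalized''. The governing identity I will use throughout is $N_\phi^{\E}(Q)=N_\phi^{\F}(Q)\cap P$, which follows directly from the definition of $N_\phi$ applied inside the ambient groups $P$ and $S$, respectively, since a conjugation $c_y$ witnessing membership is detected by the same defining condition in both systems once we intersect with $P$.

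For part (a), I start with an $\E$-isomorphism $\phi:T\to R$ with $T\leq P$, noting that $\phi\in Hom_{\F}(T,R)$ as well because $\E=\F|_{\le P}$. Since $R$ is fully $\F$-normalized and $\F$ is saturated, $R$ is receptive in $\F$, so $\phi$ extends to some $\overline\phi\in Hom_{\F}(N_\phi^{\F}(T),S)$. Restricting to $N_\phi^{\E}(T)\leq N_\phi^{\F}(T)$ gives a morphism to $S$, and because $N_\phi^{\E}(T)\leq P$ and $P$ is strongly $\F$-closed, the image lies in $P$. That restriction is the required $\E$-extension, proving receptivity.

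For part (b), fix $Q\in R^{\E}\subseteq R^{\F}$. By saturation of $\F$ and full $\F$-normality of $R$, there exists $\phi\in Hom_{\F}(Q,R)$ with $N_\phi^{\F}(Q)=N_S(Q)$. Since $Q,R\leq P$ we have $\phi\in Hom_{\E}(Q,R)$, and intersecting with $P$ yields $N_\phi^{\E}(Q)=N_S(Q)\cap P=N_P(Q)$, as required. Part (c) then falls out by a standard argument: given any $Q\in R^{\E}$, part (b) produces $\phi:Q\to R$ with $N_\phi^{\E}(Q)=N_P(Q)$, and by part (a) this $\phi$ extends to $\overline\phi\in Hom_{\E}(N_P(Q),P)$. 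A direct calculation with $N_\phi$ shows that $\overline\phi(N_P(Q))$ normalizes $R$, so $|N_P(Q)|\leq |N_P(R)|$, giving $R$ is fully $\E$-normalized.

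The only delicate point, and the main thing to verify carefully, is the identity $N_\phi^{\E}(Q)=N_\phi^{\F}(Q)\cap P$ and the use of strong $\F$-closure of $P$ to keep the images of restricted extensions inside $P$; after that, everything else is a routine translation between $\F$ and $\E$.
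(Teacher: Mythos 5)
Your proposal is correct and takes essentially the same route as the paper: for (a) restrict an $\F$-extension of $\phi$ and use strong $\F$-closure of $P$ to land in $P$, for (b) invoke the standard fact that a fully $\F$-normalized target admits $\phi$ with $N_{\phi}(Q)=N_S(Q)$ and then intersect with $P$, and for (c) combine (a) and (b) to push $N_P(Q)$ into $N_P(R)$. The only point to note is that the inclusion $N_{\phi}^{\F}(Q)\cap P\subseteq N_{\phi}^{\E}(Q)$ is not literally "direct from the definition" (the $\F$-witness $y$ lies only in $N_S(R)$, not a priori in $P$); it holds here because $R$ is receptive in $\F$, so extending $\phi$ and applying Lemma \ref{iden2} together with strong closure produces the witness $x\overline\phi\in N_P(R)$ — the same implicit step the paper takes in its proof of (b).
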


\begin{corollaryn}\label{cor phi extend Hom(N_P(Q), N_P(R)) }
	Let $\E$ be a semisaturated fusion system on a $p$-group $P$ induced by $(\F,S).$ Let $Q\leq P$ and $R\in Q^{\E}$. Assume that $Q$ is fully $\E$-normalized and $R$ is a fully $\F$-normalized. Then there exists $\phi \in Hom_{\E}(Q,R)$ such that $\phi$ extends to an isomorphism $\overline \phi \in Hom_{\E}(N_P(Q),N_P(R))$.
\end{corollaryn}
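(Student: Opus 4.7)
The plan is to assemble the three parts of Lemma~\ref{lem fully normalized w.r.t F} directly. First I would invoke part (b) of that lemma applied to $R$: because $R$ is fully $\F$-normalized and $Q \in R^{\E}$, there is some $\phi \in Hom_{\E}(Q,R)$ satisfying $N_{\phi}(Q) = N_P(Q)$ (computed inside $\E$). This produces the candidate morphism $\phi$ asked for in the statement.

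Next I would extend $\phi$ using part (a) of the same lemma: since $R$ is fully $\F$-normalized, $R$ is receptive with respect to $\E$. Receptiveness applied to the $\E$-isomorphism $\phi:Q\to R$ then yields an extension $\overline{\phi} \in Hom_{\E}(N_{\phi}(Q),P) = Hom_{\E}(N_P(Q),P)$, where the equality uses the first step.

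It remains to see that $\overline{\phi}$ actually maps into $N_P(R)$. This is purely formal and requires no further hypothesis: $\overline{\phi}$ is an injective group homomorphism, its restriction to $Q$ is $\phi$ (so $\overline{\phi}(Q)=R$), and every element of $N_P(Q)$ normalizes $Q$. Applying $\overline{\phi}$ to the relation $gQg^{-1}=Q$ for $g\in N_P(Q)$ gives $\overline{\phi}(g)\,R\,\overline{\phi}(g)^{-1}=R$, so $\overline{\phi}(N_P(Q)) \leq N_P(R)$. Hence $\overline{\phi}\in Hom_{\E}(N_P(Q),N_P(R))$ as claimed.

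There is no genuine obstacle here; the corollary essentially just repackages the internal computation carried out in the proof of Lemma~\ref{lem fully normalized w.r.t F}(c). The full-normalization hypothesis on $Q$ is not used to land the image inside $N_P(R)$, but it is natural in applications: combined with Lemma~\ref{lem fully normalized w.r.t F}(c), which implies that $R$ is also fully $\E$-normalized, it forces $|N_P(Q)|=|N_P(R)|$, so $\overline{\phi}$ is automatically an isomorphism $N_P(Q)\xrightarrow{\sim} N_P(R)$.
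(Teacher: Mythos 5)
Your proposal is correct and follows exactly the paper's route: the paper's proof of this corollary simply refers back to the proof of Lemma \ref{lem fully normalized w.r.t F}(c), which consists of the same two steps you give (choose $\phi$ via part (b) so that $N_{\phi}(Q)=N_P(Q)$, then extend via receptiveness of $R$ from part (a) and observe the image normalizes $R=\overline{\phi}(Q)$). Your closing remarks, that the image lands in $N_P(R)$ for purely formal reasons and that full normalization of $Q$ only serves to make $\overline{\phi}$ an isomorphism onto $N_P(R)$, are accurate and consistent with how the paper uses the result.
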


\begin{proof}[\textbf{Proof.}]
	See the proof of Lemma \ref{lem fully normalized w.r.t F}(c).
	\end{proof}

\begin{corollaryn}\label{cor semi imp quasisaturated}
	All semisaturated fusion systems are quasisaturated.
\end{corollaryn}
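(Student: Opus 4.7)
The plan is to unpack what quasisaturation requires for a semisaturated fusion system $\E$ induced by $(\F,S)$, and to use Lemma \ref{lem fully normalized w.r.t F} as the main ingredient. By definition, I need to show that every proper subgroup $R < P$ has some $\E$-conjugate that is reproductive, i.e.\ lies in $\E^r$.

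First, I would observe that for subgroups of $P$, the notions of $\F$-conjugacy and $\E$-conjugacy coincide. Indeed, $\E = \F_{\mid \leq P}$, so $Hom_{\E}(-, -) = Hom_{\F}(-, -)$ on subgroups of $P$; and because $P$ is strongly $\F$-closed, $R^{\F} \subseteq P$ for every $R \leq P$. Therefore $R^{\E} = R^{\F}$ whenever $R \leq P$.

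Next, given a proper subgroup $R < P$, pick $R^* \in R^{\F}$ that is fully $\F$-normalized (such an $R^*$ exists in any fusion system on $S$). By the previous paragraph, $R^* \in R^{\E}$. I now verify the two reproductivity conditions for $R^*$ in $\E$. Condition (a), that $R^*$ is receptive in $\E$, is exactly Lemma \ref{lem fully normalized w.r.t F}(a). For condition (b), let $Q \in (R^*)^{\E}$. By Lemma \ref{lem fully normalized w.r.t F}(b), there exists $\phi \in Hom_{\E}(Q, R^*)$ with $N_{\phi}(Q) = N_P(Q)$.

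The final step — and really the only substantive point — is to note that $Q < P$. This holds because $|Q| = |R^*| = |R| < |P|$, using that $\E$-conjugate subgroups are in particular isomorphic. By the standard normalizer growth property for proper subgroups of a finite $p$-group, $N_P(Q) > Q$, so $N_{\phi}(Q) > Q$ as required. Thus $R^* \in \E^r$, and since $R$ was an arbitrary proper subgroup of $P$, this shows $\E$ is quasisaturated. I do not foresee any real obstacle here; all the work was already done in Lemma \ref{lem fully normalized w.r.t F}, and the corollary amounts to assembling its parts together with the $p$-group normalizer fact.
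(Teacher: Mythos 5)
Your proof is correct and follows essentially the same route as the paper: choose a fully $\F$-normalized $\E$-conjugate of the given proper subgroup, note that $\E$- and $\F$-conjugacy agree because $P$ is strongly $\F$-closed, and invoke Lemma \ref{lem fully normalized w.r.t F}(a) and (b). The only difference is that you spell out the final step $N_{\phi}(Q)=N_P(Q)>Q$ via normalizer growth in $p$-groups, which the paper leaves implicit.
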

\begin{proof}[\textbf{Proof.}]
	
	Let $\E$ be a semisaturated fusion system on a $p$-group $P$ induced by $(\F,S)$ and let $Q<P$. Then $Q^{\F}=Q^{\E}$ as $P$ is strongly $\F$-closed. Now choose $R\in Q^{\E}$ such that $R$ is fully $\F$-normalized. Then we obtain that $R$ is reproductive by the Lemma \ref{lem fully normalized w.r.t F} (a) and (b), which proves the claim.
\end{proof}

\begin{lemman} \label{lem fully normalized w.r.t E}

	Let $\E$ be a semisaturated fusion system on a $p$-group $P$ and let $R$  a fully $\E$-normalized subgroup of $P$. Then the following hold:
	\begin{enumerate}
		
		\item[(a)]  $R$ is a receptive subgroup of $P$ with respect to $\E$.
		\item[(b)] For each $Q\in R^{\E}$, there exists $\phi\in Hom_{\E}(Q,R)$ such that $N_{\phi}(Q)=N_P(Q)$ with respect to $\E$.
		
	\end{enumerate}
\end{lemman}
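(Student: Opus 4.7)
The plan is to reduce the problem to the previous lemma (Lemma \ref{lem fully normalized w.r.t F}) by transporting $R$ to a fully $\F$-normalized $\E$-conjugate and then transporting constructions back. Since $P$ is strongly $\F$-closed, $R^{\E}=R^{\F}$, so we may pick $R^*\in R^{\E}$ that is fully $\F$-normalized. By Corollary~\ref{cor phi extend Hom(N_P(Q), N_P(R)) } applied to the fully $\E$-normalized $R$ and the fully $\F$-normalized $R^*$, there exists $\phi\in Hom_{\E}(R,R^*)$ extending to $\overline\phi\in Hom_{\E}(N_P(R),N_P(R^*))$. The key observation is that $\overline\phi$ is actually an isomorphism onto $N_P(R^*)$: since $\overline\phi$ is injective we have $|N_P(R)|\leq |N_P(R^*)|$, while the full $\E$-normalization of $R$ gives the reverse inequality. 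In particular, $R^*$ is also fully $\E$-normalized, and $\overline\phi^{-1}\in Hom_{\E}(N_P(R^*),N_P(R))$ is available.

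For part $(b)$, given $Q\in R^{\E}=R^*{}^{\E}$, Lemma~\ref{lem fully normalized w.r.t F}(b) applied to $R^*$ furnishes $\phi_1\in Hom_{\E}(Q,R^*)$ with $N_{\phi_1}(Q)=N_P(Q)$. The candidate is then $\phi':=\overline\phi^{-1}\circ \phi_1\in Hom_{\E}(Q,R)$. To show $N_{\phi'}(Q)=N_P(Q)$, I would take $x\in N_P(Q)$, pick $y\in N_P(R^*)$ with $\phi_1c_x=c_y\phi_1$ on $Q$, and verify, using that $\overline\phi$ is a homomorphism defined on all of $N_P(R^*)\supseteq \langle \phi_1(Q),y\rangle$, that conjugation by $z:=\overline\phi^{-1}(y)\in N_P(R)$ realizes $(\phi')^{-1}c_x\phi'$ on $R$.

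For part $(a)$, let $\psi\colon T\to R$ be an $\E$-isomorphism; I want to extend $\psi$ to $N_\psi(T)$. Since $R^*$ is fully $\F$-normalized, it is receptive with respect to $\E$ by Lemma~\ref{lem fully normalized w.r.t F}(a). Thus it suffices to show $N_\psi(T)\subseteq N_{\phi\psi}(T)$, so that $\phi\psi$ extends to $N_\psi(T)$; the calculation is the same as above, using that if $x\in N_\psi(T)$ with $\psi c_x=c_y\psi$ for $y\in N_P(R)$, then $\overline\phi(y)\in N_P(R^*)$ witnesses $x\in N_{\phi\psi}(T)$. Once $\phi\psi$ is extended to $\widetilde{\phi\psi}\colon N_\psi(T)\to P$, the image lies in $N_P(R^*)\cdot C_P(R^*)=N_P(R^*)$ (by comparing conjugation actions on $R^*$), so post-composing with $\overline\phi^{-1}$ yields the desired extension of $\psi$ to $N_\psi(T)$.

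The only subtle point I expect is verifying that the extended morphism $\widetilde{\phi\psi}$ lands in the domain of $\overline\phi^{-1}$, i.e.\ in $N_P(R^*)$; this is where one must carefully use that an extension is determined up to an element of $C_P(R^*)\leq N_P(R^*)$. The rest is purely formal transport along $\overline\phi$, so no serious obstacle arises.
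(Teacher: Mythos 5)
Your proof is correct and takes essentially the same route as the paper: both transport $R$ to a fully $\F$-normalized $\E$-conjugate $R^*$ via Corollary \ref{cor phi extend Hom(N_P(Q), N_P(R)) }, invoke Lemma \ref{lem fully normalized w.r.t F} together with the receptivity of $R^*$, and pull the resulting extensions back along $\overline\phi^{-1}$ using the identity $c_{s\overline\phi}=\phi^{-1}c_s\phi$. Your explicit verification that $\overline\phi$ maps onto $N_P(R^*)$ (so that $\overline\phi^{-1}$ exists) is a point the paper uses only implicitly, but otherwise the arguments coincide.
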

\begin{proof}[\textbf{Proof.}]
 Let $\E$ be induced by $(\F,S)$ where $\F$ is saturated, and let $R^*\in R^{\E}=R^{\F}$ such that $R^*$ is fully $\F$-normalized.
	
	We can pick $f \in Hom_{\E}(R,R^*)$ such that $f$ extends to an isomorphism $$\overline f \in Hom_{\E}(N_P(R),N_P(R^*))$$ by Corollary \ref{cor phi extend Hom(N_P(Q), N_P(R)) }.
	
	 Let $Q\in R^{\E}$ and $\phi\in Hom_{\E}(Q,R)$. Set  $\phi \circ f=\psi$. Since $R^*$ is receptive by Lemma \ref{lem fully normalized w.r.t F} (a), $\psi$ extends to $\overline \psi \in Hom_{\E}(N_{\psi}(Q),N_P(R^*))$. We now claim that $N_{\phi}(Q)= N_{\psi}(Q)$. Let $x\in N_{\phi}(Q)$. Then there exists $y\in N_P(R)$ such that 
	${c_x}^{\phi}=c_y$. Then we have $${c_x}^{\psi}={c_x}^{\phi f}={c_y}^f=c_{y \overline f}.$$
	Note that we apply Lemma \ref{iden2} at the last equality.
	We get that $x\in N_{\psi}(Q)$  as $y\overline f \in N_P(R^*)$. Now let $x\in N_{\psi}(Q)$. Then there exists $z\in N_P(R^*)$ such that  ${c_x}^{\psi}={c_x}^{\phi f}=c_z$. It follows that $${c_x}^{\phi}={c_z}^{f^{-1}}={c_{z \overline f^{-1}}}$$
	
	by appealing Lemma \ref{iden2}. We get that $x\in N_{\phi}(Q)$ as $z \overline f^{-1}\in N_p(R)$. Consequently, $N_{\phi}(Q)=N_{\psi}(Q)$.
	Now we are ready to prove (a).
	 Observe that $\overline \psi \circ {\overline f}^{-1} \in Hom_{\E}(N_{\phi}(Q),N_P(R))$, and restriction of $\overline \psi \circ {\overline f}^{-1}$ to $Q$ is $\phi \circ f \circ f^{-1}=\phi$ as desired.
	 
	 Since $\psi=\phi \circ f$, we can choose $\phi$ such that $N_{\psi}(Q)=N_P(Q)$  by Lemma \ref{lem fully normalized w.r.t F} (b). Then $(b)$ follows from the fact that $N_{\phi}(Q)=N_{\psi}(Q)$.
\end{proof}

\begin{corollaryn} \label{cor fully norm imp reproductive}
	Let $\E$ be a semisaturated fusion system on a $p$-group $P$. Then proper  fully normalized subgroups of $P$ are reproductive. 
\end{corollaryn}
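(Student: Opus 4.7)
The plan is to derive both conditions of reproductivity directly from Lemma \ref{lem fully normalized w.r.t E}. Let $R$ be a proper fully normalized subgroup of $P$ with respect to $\E$. Condition (a) of the definition of reproductive, namely that $R$ is receptive with respect to $\E$, is exactly the statement of Lemma \ref{lem fully normalized w.r.t E}(a), so nothing needs to be done there.

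For condition (b), I would proceed as follows. Let $Q \in R^{\E}$; then $Q$ is isomorphic to $R$, hence $Q$ is a proper subgroup of $P$. By Lemma \ref{lem fully normalized w.r.t E}(b), there exists $\phi \in \mathrm{Hom}_{\E}(Q,R)$ such that $N_{\phi}(Q) = N_P(Q)$ (computed inside $P$ with respect to $\E$). Now apply the standard fact that in a $p$-group $P$, the normalizer of any proper subgroup strictly contains it; since $Q < P$, this gives $N_P(Q) > Q$, so $N_{\phi}(Q) > Q$. Thus $\phi$ witnesses condition (b) of Definition of reproductive, and $R \in \E^r$.

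The only subtle point, and what I anticipate as the main (and essentially only) obstacle, is verifying that everything in Lemma \ref{lem fully normalized w.r.t E}(b) is really being taken with respect to $\E$ rather than the ambient saturated fusion system $\F$ that induces $\E$, so that ``$N_P(Q) > Q$'' is the correct conclusion. But the lemma is already stated intrinsically in $\E$, so this is automatic, and the corollary is essentially immediate from it.
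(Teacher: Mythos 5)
Your proof is correct and follows exactly the route the paper intends: the corollary is stated as an immediate consequence of Lemma \ref{lem fully normalized w.r.t E}, with part (a) giving receptivity and part (b) giving $N_{\phi}(Q)=N_P(Q)>Q$ since $Q$ is a proper subgroup of the $p$-group $P$. Nothing is missing.
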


\begin{definition}\label{def main  essential}
	Let $\E$ be a semisaturated fusion system on a $p$-group $P$. An essential collection $\mathfrak{E}$  is called \textbf{a main essential collection} if each member of  $\mathfrak{E}$  is fully normalized.
\end{definition}

\begin{theoremn}\label{thm Alperin 2}
	Let $\E$ be a semisaturated fusion system on a $p$-group $P$. Then  a  main essential collection always exists (possibly empty). Moreover, for any main essential collection $\mathfrak E$, we have $\E=\langle Aut_{\E}(Q) \mid  Q\in \mathfrak E \ or \ Q=P  \rangle.$
\end{theoremn}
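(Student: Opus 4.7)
The plan is to settle the two assertions in turn, building on the generalized Alperin theorem for quasisaturated systems together with the receptive/reproductive dictionary already developed for semisaturated systems.

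\textbf{Existence of a main essential collection.} First I would construct the representing set $\mathfrak{U}$. For each $\E$-conjugacy class of proper subgroups of $P$, pick a fully normalized representative (one exists by maximality of $|N_P(\cdot)|$ over the finite class). Let $\mathfrak{U}$ be the collection of these representatives. By Corollary \ref{cor fully norm imp reproductive}, every proper fully normalized subgroup of $P$ is reproductive, so $\mathfrak{U} \subseteq \E^r$. Conversely, every reproductive subgroup is proper in $P$ (Definition \ref{def essential subgroup}), hence its $\E$-class is one of those we sampled from. Therefore $\mathfrak{U}$ is an $\E$-conjugacy representative of $\E^r$ in the sense of Definition \ref{def essential}, and the set $\mathfrak{E} := \{Q \in \mathfrak{U} \mid Q \text{ is } \E\text{-essential}\}$ is an essential collection whose members are all fully normalized, i.e., a main essential collection. (It may be empty if no fully normalized subgroup happens to be essential.)

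\textbf{Generation.} Let $\mathfrak{G}$ be the smallest fusion subsystem of $\E$ containing $Aut_{\E}(Q)$ for all $Q \in \mathfrak{E}$ and for $Q = P$; the inclusion $\mathfrak{G} \subseteq \E$ is tautological. Every morphism in a fusion system factors as an isomorphism onto its image followed by an inclusion, and inclusions are present in every fusion system, so it suffices to show that each $\E$-isomorphism lies in $\mathfrak{G}$. Since $\E$ is semisaturated, it is quasisaturated by Corollary \ref{cor semi imp quasisaturated}, so Theorem \ref{thm Alperin 1} applies with the essential collection $\mathfrak{E}$, decomposing any $\E$-isomorphism $\psi \colon Q \to R$ as $(\psi_1|_{Q_1}) \circ \cdots \circ (\psi_n|_{Q_n})$ with $\psi_i \in Aut_{\E}(S_i)$ and each $S_i \in \mathfrak{E}$ or $S_i = P$. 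In particular $\psi \in \mathfrak{G}$, which yields the generation statement.

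\textbf{Main obstacle.} Essentially all of the heavy lifting has been done upstream: Theorem \ref{thm Alperin 1} supplies the factorization, Corollary \ref{cor semi imp quasisaturated} makes it applicable in the semisaturated setting, and Corollary \ref{cor fully norm imp reproductive} certifies that fully normalized proper subgroups are reproductive. The only delicate point is verifying that the chosen $\mathfrak{U}$ genuinely represents the $\E$-conjugacy classes of $\E^r$ exactly once each, which rests on the pair of observations that reproductive subgroups are proper and that every $\E$-conjugacy class of a proper subgroup contains a fully normalized member.
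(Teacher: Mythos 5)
Your proposal is correct and follows essentially the same route as the paper: choose fully normalized representatives of the $\E$-conjugacy classes of proper subgroups, use Corollary \ref{cor fully norm imp reproductive} to see they lie in $\E^r$ and form an $\E$-conjugacy representative of $\E^r$ (since reproductive subgroups are proper by definition), and then invoke Theorem \ref{thm Alperin 1}, applicable because semisaturated implies quasisaturated (Corollary \ref{cor semi imp quasisaturated}), to obtain the generation statement.
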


\begin{proof}[\textbf{Proof.}]
	Let $\mathfrak U$ be $\E$-conjugacy representative of proper fully normalized subgroups of $P$. By  Corollary \ref{cor fully norm imp reproductive}, we have $\mathfrak U \subseteq \E^r$. Moreover, $\mathfrak U$ is also $\E$-conjugacy representative of $\E^r$ as each proper subgroup has an $\E$-conjugate lying in $\mathfrak U$. Now set $\mathfrak{E}=\{Q\in \mathfrak{U}\mid Q \textit{ is $\E$-essential}  \}.$ Clearly, $\mathfrak{E}$ is an essential collection whose members are fully normalized, which makes  $\mathfrak{E}$ a main essential collection. We obtain that $\E=\langle Aut_{\E}(Q) \mid  Q\in \mathfrak E \ or \ Q=P  \rangle$ by Theorem \ref{thm Alperin 1}.
\end{proof}

Consequently, fully normalized essential subgroups are our main objects right now. Thus, we shall focus on the structure of $Aut_{\E}(Q)$ where $Q$ is a fully normalized $\E$-essentail subgroup.

\begin{lemman}\label{lem aut_p(Q) is strongly closed}
	Let $\E$ be a semisaturated fusion system on a $p$-group $P$ induced by $(\F,S).$ Let $Q$ be a receptive subgroup of $S$ with respect to $\F$. Then $Aut_P(Q)$ is strongly closed in $Aut_S(Q)$ with respect to $Aut_{\F}(Q)$.
\end{lemman}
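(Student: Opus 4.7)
The plan is to unwind the definition of strong closure directly. I would take an arbitrary $\phi\in Aut_P(Q)$ and $\alpha\in Aut_{\F}(Q)$ satisfying $\phi^{\alpha}:=\alpha^{-1}\phi\alpha\in Aut_S(Q)$, and aim to conclude $\phi^{\alpha}\in Aut_P(Q)$. Writing $\phi=c_x|_Q$ with $x\in N_P(Q)$, the hypothesis $\phi^{\alpha}\in Aut_S(Q)$ says that $\alpha^{-1}c_x\alpha=c_y|_Q$ for some $y\in N_S(Q)$; since $Q\alpha=Q$, this is precisely the assertion that $x\in N_{\alpha}(Q)$.

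Next I would invoke the receptivity of $Q$ in $\F$ to extend $\alpha$ to some $\bar\alpha\in Hom_{\F}(N_{\alpha}(Q),S)$. Applying Lemma \ref{iden2} to this extension $\bar\alpha$ and the element $x\in N_{\alpha}(Q)$ yields
\[
c_{x\bar\alpha}=\alpha^{-1}c_x\alpha \quad\text{on } Q\alpha=Q,
\]
so $\phi^{\alpha}=c_{x\bar\alpha}|_Q$. Hence $\phi^{\alpha}$ is already realised as conjugation by the single element $x\bar\alpha\in S$.

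The key final step is the use of the strong $\F$-closure of $P$: since $x\in P$ and $\bar\alpha$ is a morphism in $\F$, we have $x\bar\alpha\in P$; and since $c_{x\bar\alpha}|_Q=\phi^{\alpha}$ is an automorphism of $Q$, the element $x\bar\alpha$ must normalise $Q$, so $x\bar\alpha\in N_P(Q)$. This gives $\phi^{\alpha}\in Aut_P(Q)$, as required. I do not anticipate any real obstacle; the argument is a short chase that combines receptivity (to produce $\bar\alpha$), Lemma \ref{iden2} (to rewrite $\alpha^{-1}c_x\alpha$ as inner conjugation by $x\bar\alpha$), and strong $\F$-closure of $P$ (to ensure $x\bar\alpha$ lands in $P$ and not merely in $S$).
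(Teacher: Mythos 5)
Your argument is correct and coincides with the paper's own proof: both translate the hypothesis $\phi^{\alpha}\in Aut_S(Q)$ into $x\in N_{\alpha}(Q)$, use receptivity to extend $\alpha$, apply Lemma \ref{iden2} to identify $\phi^{\alpha}$ with $c_{x\bar\alpha}$ on $Q$, and finish with strong $\F$-closure of $P$ to get $x\bar\alpha\in P\cap N_S(Q)=N_P(Q)$. No gaps; nothing further is needed.
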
	
	\begin{proof}[\textbf{Proof.}]
		
		Let $c_x\in Aut_P(Q)$ where $x\in N_P(Q)$ and $\psi\in Aut_{\F}(Q)$ such that $c_x^{\psi}\in Aut_S(Q)$. We need to show that $c_x^{\psi}\in Aut_P(Q).$
		
	We see that $c_x^{\psi}\in Aut_S(Q)^{\psi}\cap Aut_S(Q) $, and so $x\in N_{\psi}(Q)$. On the other hand, $Q$ is receptive, and so $\psi$ extends to $\overline \psi\in Hom_{\F}(N_{\psi}(Q),P)$. It then follows that $c_x^\psi=c_x^{\overline \psi}=c_{x\overline \psi}$ on $Q$ by Lemma \ref{iden2}. Since $P$ is strongly $\F$-closed, we obtain that $x\overline \psi \in P$, and so $x\overline \psi\in P\cap N_S(Q)=N_P(Q)$. Thus, $c_x^\psi=c_{x\overline \psi}\in Aut_P(Q)$ as desired.
		
	\end{proof}

\begin{remark}
In the above lemma, $Aut_S(Q)$ is not necessarily a Sylow $p$-subgroup of $Aut_{\F}(Q)$ unless $Q$ is fully $\F$-normalized.
\end{remark}

\begin{lemman}\label{lem fully norm imps strongly closed}
	Let $\E$ be a semisaturated fusion system on a $p$-group $P$ and let $R\leq P$  a fully $\E$-normalized. Let $T\in Syl_p(Aut_{\E}(Q))$ such that $Aut_P(Q)\leq T$. Then $Aut_P(Q)$ is strongly closed in $T$ with respect to $Aut_{\E}(Q)$.
\end{lemman}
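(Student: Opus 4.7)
The plan is to reduce to Lemma \ref{lem aut_p(Q) is strongly closed} by transporting the situation along an $\E$-isomorphism onto a fully $\F$-normalized conjugate $R^{*}$ of $R$, and then to transfer strong closure between two Sylow $p$-subgroups of $Aut_{\F}(R^{*})$.

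First, I would pick $R^{*}\in R^{\E}=R^{\F}$ that is fully $\F$-normalized. Because $R$ is fully $\E$-normalized and $R^{*}$ is fully $\F$-normalized, Corollary \ref{cor phi extend Hom(N_P(Q), N_P(R)) } provides an $\E$-isomorphism $\phi\colon R\to R^{*}$ that extends to $\overline{\phi}\in Hom_{\E}(N_{P}(R),N_{P}(R^{*}))$. The fully $\E$-normalized hypothesis, combined with the injectivity of $\overline{\phi}$, forces $|N_{P}(R)|=|N_{P}(R^{*})|$, so $\overline{\phi}$ is a bijection. Via Lemma \ref{iden2}, conjugation by $\phi$ then induces an isomorphism $c_{\phi}\colon Aut_{\E}(R)\to Aut_{\E}(R^{*})=Aut_{\F}(R^{*})$ mapping $Aut_{P}(R)$ onto $Aut_{P}(R^{*})$ and mapping $T$ onto a Sylow $p$-subgroup $T^{*}:=c_{\phi}(T)$ of $Aut_{\F}(R^{*})$ containing $Aut_{P}(R^{*})$.

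Next, I would apply Lemma \ref{lem aut_p(Q) is strongly closed} to $R^{*}$, which is receptive with respect to the saturated $\F$ because it is fully $\F$-normalized. This gives that $Aut_{P}(R^{*})$ is strongly closed in $Aut_{S}(R^{*})$ with respect to $Aut_{\F}(R^{*})$, and $Aut_{S}(R^{*})$ is itself a Sylow $p$-subgroup of $Aut_{\F}(R^{*})$ (by saturation and full $\F$-normalization of $R^{*}$). So I have two Sylow $p$-subgroups $Aut_{S}(R^{*})$ and $T^{*}$ of $Aut_{\F}(R^{*})$, each containing $Aut_{P}(R^{*})$, and strong closure known for one of them.

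Finally, I would invoke the following fact: if a subgroup $D$ of a finite group $G$ is strongly closed in a Sylow $p$-subgroup $P_{1}$ containing $D$, then $D$ is strongly closed in every Sylow $p$-subgroup $P_{2}$ containing $D$. This is a short counting argument: for each $G$-conjugacy class $C$ of $p$-elements, one has $|C\cap P_{1}|=|C\cap P_{2}|$ since Sylows are $G$-conjugate and $C$ is $G$-invariant; strong closure in $P_{1}$ gives $C\cap P_{1}=C\cap D$, which combined with $C\cap D\subseteq P_{2}$ and the cardinality equality forces $C\cap P_{2}=C\cap D\subseteq D$. Applying this with $D=Aut_{P}(R^{*})$, $P_{1}=Aut_{S}(R^{*})$, $P_{2}=T^{*}$, $G=Aut_{\F}(R^{*})$, and then transporting back along $c_{\phi}^{-1}$, I conclude that $Aut_{P}(R)$ is strongly closed in $T$ with respect to $Aut_{\E}(R)$. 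I expect the main obstacle to be this last Sylow-transfer step, which is not already packaged in the preceding lemmas and has to be justified inline; the rest is a routine transport of Lemma \ref{lem aut_p(Q) is strongly closed} along $\phi$.
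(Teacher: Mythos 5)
Your proposal is correct and follows essentially the same route as the paper: transport the strong closure given by Lemma \ref{lem aut_p(Q) is strongly closed} along the extension $\overline\phi$ supplied by Corollary \ref{cor phi extend Hom(N_P(Q), N_P(R)) } (using the equality $|N_P(R)|=|N_P(R^*)|$ to get surjectivity), and then pass from one Sylow $p$-subgroup containing $Aut_P(Q)$ to another. The only difference is that you spell out the Sylow-transfer step with a correct counting argument on conjugacy classes, whereas the paper simply asserts it with ``It follows that $Aut_P(Q)$ is strongly closed in each Sylow subgroup containing it.''
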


\begin{proof}[\textbf{Proof.}] Let $\E$ be induced by $(\F,S)$ where $\F$ is saturated, and let $R\in Q^{\E}=Q^{\F}$ such that $R$ is fully $\F$-normalized.  There exists $\phi \in Hom_{\E}(R,Q)$ such that
	$\phi$ extends to $\overline \phi\in Hom_{\E}(N_P(R),N_P(Q))$ by Corollary  \ref{cor phi extend Hom(N_P(Q), N_P(R)) }.
	
	 As $R$ is a receptive subgroup of $S$ with respect to $\F$, we see that $Aut_P(R)$ is strongly closed in $Aut_S(R)$ with respect to $Aut_{\F}(R)$ by Lemma \ref{lem aut_p(Q) is strongly closed}. Note also that $Aut_{\F}(R)=Aut_{\E}(R)$ as $R\leq P$ and $Aut_S(R)$ is a Sylow $p$-subgroup of $Aut_{\E}(R)$ as $R$ is fully $\F$-normalized. Clearly, $\overline \phi$ induces an isomorphism $\Psi: Aut_{\E}(R) \to Aut_{\E}(Q)$ by $f\mapsto f^{\overline \phi}$ for $f\in Aut_{\E}(R)$. Hence, we get that $(Aut_P(R))\Psi=Aut_P(Q)$ is strongly closed in $T^*=(Aut_S(R))\Psi$ with respect to $Aut_{\E}(Q)$. Notice that $T^*$ is a Sylow $p$-subgroup of $Aut_{\E}(Q)$. It follows that $Aut_P(R)$ is strongly closed in each Sylow subgroup of $Aut_{\E}(Q)$ that contains $Aut_P(R)$, and inparticular in $T$, which proves the claim.

\end{proof}

\begin{definition}\label{def strongly D-embedded}
Let $G$ be a group and $S\in Syl_p(G)$, and $D>1$ be a strongly closed subgroup in $ S$ with respect to $G$. We say that  a proper subgroup $H$ of $G$ is strongly $D$-embedded in $G$ if 
\begin{enumerate}
	\item[i)] $D^x \leq H$ for some $x\in G$.

	\item[ii)]   for all $g\in G-H$, the subgroup $H\cap H^g$ does not contain any $G$-conjugate of any nontrivial subgroup $U$ of $D$.
\end{enumerate}

\end{definition}

\begin{remark}
In the case that $D=S$, being strongly $D$-embedded in $G$ is equivalent to being strongly $p$-embedded in $G$.
\end{remark}

\begin{lemman}\label{lem strongly d-embedded}
	Let $G$ be a group and $S\in Syl_p(G)$, and $D>1$ be a strongly closed subgroup in $ S$ with respect to $G$. Let $H=\langle x\in G \mid D\cap D^x>1 \rangle$. 
	
	\begin{enumerate}
		\item[(a)]  If $H$ is proper in $G$ then $H$ is strongly $D$-embedded in $G$. 
		\item[(b)] Assume that $K$ is strongly $D$-embedded in $G$ and $D\leq K$. Then $H\leq K$. Moreover, $O_p(G)\cap D=1.$ 
	\end{enumerate}

\end{lemman}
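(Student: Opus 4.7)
The plan is to first extract a key preliminary fact about $H$ from the strong-closure hypothesis, then handle (a) and (b) separately; only (a) requires real work.

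First I would use strong closure to deduce $D\trianglelefteq S$: for $d\in D$ and $s\in S$ we have $d^s\in S$, and strong closure in $S$ places $d^s\in D$. Hence $D^s=D$ and $D\cap D^s=D>1$ for every $s\in S$, so every element of $S$ lies in the defining generating set of $H$. This gives $S\le H$, whence $S\in Syl_p(H)$ and $D\le H$; the latter is condition (i) in Definition \ref{def strongly D-embedded} for $H$.

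For (a) it remains to verify condition (ii). I would argue by contradiction: assume $g\in G\setminus H$ and $U^y\le H\cap H^g$ for some $1<U\le D$ and $y\in G$. Since $U^y$ is a nontrivial $p$-subgroup of $H$ and $S\in Syl_p(H)$, Sylow's theorem inside $H$ supplies $h\in H$ with $U^{yh^{-1}}\le S$; similarly $U^{yg^{-1}}\le H$ yields $h'\in H$ with $U^{yg^{-1}h'^{-1}}\le S$. Strong closure of $D$ in $S$ then forces both $U^{yh^{-1}}$ and $U^{yg^{-1}h'^{-1}}$ into $D$. Setting $c=hg^{-1}h'^{-1}$, a direct calculation gives $(U^{yh^{-1}})^c=U^{yg^{-1}h'^{-1}}\le D$, so $U^{yh^{-1}}\le D\cap D^{c^{-1}}$ is nontrivial, which puts $c^{-1}$ (hence $c$) in $H$; since $h,h'\in H$, this yields $g\in H$, a contradiction. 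The main (though still elementary) obstacle here is the bookkeeping of the two conjugating elements, and the crucial inputs are $S\le H$ and strong closure of $D$ in $S$.

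For (b), the inclusion $H\le K$ is immediate: if $x\in G$ satisfies $D\cap D^x>1$, then $D\cap D^x$ is a nontrivial subgroup of $D$ lying in $K\cap K^x$ (since $D\le K$ gives $D^x\le K^x$), so strong $D$-embedding of $K$ forces $x\in K$; as such $x$ generate $H$, we obtain $H\le K$. For the last assertion let $N=O_p(G)$ and suppose $U:=N\cap D>1$. Then $N\le S\le H\le K$, and normality of $N$ in $G$ gives $U^g=N\cap D^g\le N\le K$ and $U^g\le D^g\le K^g$ for every $g\in G$; hence $U^g\le K\cap K^g$ is a $G$-conjugate of the nontrivial subgroup $U\le D$. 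Strong $D$-embedding of $K$ now forces $g\in K$ for all $g\in G$, contradicting the properness of $K$ in $G$. Therefore $O_p(G)\cap D=1$.
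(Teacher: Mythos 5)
Your proof is correct and follows essentially the same route as the paper: establish $S\le H$ from $D\trianglelefteq S$, then conjugate the offending subgroup into $S$ via Sylow's theorem and use strong closure to land it inside $D$, forcing the relevant element into $H$; part (b)'s inclusion $H\le K$ is the same direct argument. The only cosmetic difference is in the final claim $O_p(G)\cap D=1$, which you derive by applying the embedding property of $K$ to the conjugates of $O_p(G)\cap D$, whereas the paper applies part (a) to $H$ and the intersection $H\cap H^x$ for $x\notin H$ — the two are trivially equivalent.
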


\begin{proof}[\textbf{Proof.}] $(a)$
Assume $H<G$.
Clearly we have $S\leq H$ as $D\lhd S$. Pick $x\in G\setminus H$ such that $H\cap H^x$ contains $V>1$ where $V$ is conjugate to a subgroup of $ D$. Then we have $V\leq R\cap T^x$ for some $R,T\in Syl_p(H)$. Since $S\leq H$, set $R=S^a$ and $T=S^b$ for some $a,b\in H$.

 Clearly, $D^a$ is strongly closed in $S^a$ and $V\leq S^a$. Since $V$ is conjugate to a subgroup of $D^a$, we have $V\leq D^a$. Similarly, we have $V\leq D^{bx}$, and so $1<V\leq D^a\cap D^{bx}.$ Then $D\cap  D^{bxa^{-1}}\neq 1$. Thus, we get $bxa^{-1}\in H$, and so $x\in H$, which is a contradiction. Hence $H$ is strongly $D$-embedded in $G$.

$(b)$ Let $K$ be strongly  $D$-embedded in $G$ and $D\leq K$. Let $x\in G$ such that $D\cap D^x>1.$ Then $K\cap K^x$ contains $D\cap D^x>1$, which forces that $x\in K$. Thus, $H\leq K$ as desired. 

Since $S\leq H$, we see that $O_p(G)\leq H\cap H^x$ for any $x\in G- H.$ Note that $H$ is strongly $D$-embedded in $G$ by part $(a)$, and so $ O_p(G)\cap D \leq H\cap H^x\cap D=1.$

\end{proof}

\begin{propositionn}\label{prop charac. of essentails}
Let $\E$ be a semisaturated fusion system on a $p$-group $P$ and $Q$ be a fully normalized $\E$-essential subgroup of $P$. Then the following hold:

\begin{enumerate}
\item[(a)]  $Q$ is $\E$-centric.
\item[(b)] Let $H=\langle \phi \in Out_{\E}(Q) \mid Out_{P}(Q)\cap Out_{P}(Q)^{\phi}>1 \rangle $. Then $H$ is strongly $Out_{P}(Q)$-embedded in $Out_{\E}(Q)$. Moreover, $Out_{P}(Q)\cap O_p(Out_{\E}(Q))=1.$

\item[(c)] $Inn(Q)=Core_G(D)$ where $G=Aut_{\E}(Q)$ and $D=Aut_P(Q)$.
\end{enumerate}
\end{propositionn}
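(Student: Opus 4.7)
Part (a) will be immediate from the preceding theory: since $\E$ is semisaturated, Corollary \ref{cor semi imp quasisaturated} makes it quasisaturated, and then Lemma \ref{lem ess imp f-centric} gives that $Q$ is $\E$-centric. In particular $C_P(Q)\le Q$, so $Out_P(Q)=N_P(Q)/Q$, a fact that will be used throughout (b).

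For (b), the plan is to identify $H$ with the image in $Out_{\E}(Q)$ of the subgroup $H_Q=\langle\phi\in Aut_{\E}(Q)\mid N_\phi(Q)>Q\rangle$ from Definition \ref{def essential subgroup}. The technical heart is the equivalence, for each $\phi\in Aut_{\E}(Q)$, of the conditions $N_\phi(Q)>Q$ and $Out_P(Q)\cap Out_P(Q)^{\overline\phi}>1$, where $\overline\phi$ denotes the image of $\phi$ in $Out_{\E}(Q)$. Given $y\in N_\phi(Q)\setminus Q$, one has $\phi^{-1}c_y\phi=c_x$ on $Q$ for some $x\in N_P(Q)$; $\E$-centricity forces $y\notin Q\cdot C_P(Q)=Q$, hence $\overline{c_y}\ne 1$ in $Out_P(Q)$, and normality of $Inn(Q)$ in $Aut_{\E}(Q)$ forces $\overline{c_x}\ne 1$ as well, witnessing the nontrivial intersection. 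The converse unwinds the same identity. Since $Inn(Q)\le H_Q$ (each $c_q$ satisfies $N_{c_q}(Q)=N_P(Q)>Q$), the image of $H_Q$ in $Out_{\E}(Q)$ is exactly $H$, and essentiality (Definition \ref{def essential subgroup}(b)) transfers to $H<Out_{\E}(Q)$. Combined with Lemma \ref{lem fully norm imps strongly closed} (which provides that $Out_P(Q)$ is strongly closed in a Sylow $p$-subgroup of $Out_{\E}(Q)$), Lemma \ref{lem strongly d-embedded}(a) yields the strong $Out_P(Q)$-embedding of $H$. For the moreover clause, one notes $Out_P(Q)\le H$ since any nontrivial $\overline{c_x}\in Out_P(Q)$ satisfies $Out_P(Q)\cap Out_P(Q)^{\overline{c_x}}=Out_P(Q)>1$, so Lemma \ref{lem strongly d-embedded}(b) delivers $O_p(Out_{\E}(Q))\cap Out_P(Q)=1$.

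Part (c) will follow quickly from (b). The inclusion $Inn(Q)\le Core_G(D)$ is clear, since $Inn(Q)$ is normal in $G=Aut_{\E}(Q)$ and contained in $D=Aut_P(Q)$. Conversely, $Core_G(D)$ is a normal $p$-subgroup of $G$ (as $D$ is a $p$-group), hence lies in $O_p(G)$; its image in $Out_{\E}(Q)$ is then a normal $p$-subgroup contained in $Out_P(Q)$, and thus contained in $O_p(Out_{\E}(Q))\cap Out_P(Q)=1$ by (b), forcing $Core_G(D)\le Inn(Q)$. The main obstacle in the whole proof is the equivalence at the start of (b): both directions demand careful bookkeeping of lifts from $Out_{\E}(Q)$ back to $Aut_{\E}(Q)$ and a precise use of $\E$-centricity to prevent the witnessing conjugations from collapsing into $Inn(Q)$.
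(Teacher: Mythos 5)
Your proposal is correct and follows essentially the same route as the paper: (a) via Lemma \ref{lem ess imp f-centric}, (b) via the centricity-based equivalence $N_\phi(Q)>Q \iff Out_P(Q)\cap Out_P(Q)^{\overline\phi}>1$ together with Lemmas \ref{lem fully norm imps strongly closed} and \ref{lem strongly d-embedded}, and (c) as a quick consequence of (b). The only (immaterial) difference is in (c), where you route through $Out_P(Q)\cap O_p(Out_{\E}(Q))=1$ rather than contradicting the properness of $H$ directly, as the paper does.
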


\begin{proof}[\textbf{Proof.}]
Part $(a)$ directly follows from Lemma \ref{lem ess imp f-centric}.

 $(b)$ Set $K=\langle \phi\in Aut_{\E}(Q) \mid N_{\phi}(Q)>Q \rangle$. Since $Q$ is essential, $K$ is proper in $Aut_{\E}(Q)$. Moreover, we see that $Aut_{P}(Q)$ is strongly closed in $S$  with respect to $Aut_{\E}(Q)$ by Lemma \ref{lem fully norm imps strongly closed}   where  $S\in Syl_p(Aut_{\E}(Q))$ such that $Aut_P(Q)\leq S$.
 
 Since $Q$ is $\E$-centric,  $N_{\phi}(Q)>Q$ if and only if $Aut_P(Q)\cap Aut_P(Q)^{\phi}>Inn(Q)$ if and only if $Out_P(Q)\cap Out_P(Q)^{\phi}>1$. It follows that $K/Inn(Q)=H=\langle \phi \in Out_{\E}(Q) \mid Out_{P}(Q)\cap Out_{P}(Q)^{\phi}>1 \rangle $ is proper in $Out_{\E}(Q)$. Note that $Out_P(Q)=Aut_P(Q)/Inn(Q)$ is strongly closed in $S/Inn(Q)$ with respect to $Out_{\E}(Q)=Aut_{\E}(Q)/Inn(Q)$. Then $H$ is strongly $Out_{P}(Q)$-embedded in $Out_{\E}(Q)$ by Lemma \ref {lem strongly d-embedded}(a). We also see that $Out_{P}(Q)\cap O_p(Out_{\E}(Q))=1$ by Lemma \ref {lem strongly d-embedded}(b).
 
 $(c)$ Let $C=Core_G(D)$ where $G=Aut_{\E}(Q)$ and $D=Aut_P(Q)$. We have clearly $Inn(Q)\leq C$ as $Inn(Q)\leq D$. Suppose that $C>Inn(Q)$ and write $\overline C=C/Inn(Q)$. Then $1<\overline C\leq Out_P(Q)$, which yields that $1<\overline C \leq Out_{P}(Q)\cap Out_{P}(Q)^{\phi}$ for all $\phi\in Out_{\E}(Q)$ as $\overline C \unlhd Out_{\E}(Q)$. This contradicts with the fact that $H$ is proper in $Out_{\E}(Q)$. Thus, $C=Inn(Q)$ as desired.
\end{proof}

\begin{corollaryn}\label{cor equivalent def of essential}
	Let $\E$ be a semisaturated fusion system on a $p$-group $P$ and $Q$ be a fully normalized $\E$-centric subgroup of $P$. Then $Q$ is essential if and only if $Out_{\E}(Q)$ has a strongly $Out_{P}(Q)$-embedded subgroup.
\end{corollaryn}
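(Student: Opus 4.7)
The plan is to recognize this corollary as a repackaging of Proposition \ref{prop charac. of essentails}(b) together with Corollary \ref{cor fully norm imp reproductive}, so both directions should essentially be bookkeeping once the right identifications are made.

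For the forward direction, assume $Q$ is essential. Then Proposition \ref{prop charac. of essentails}(b) applied to the fully normalized essential subgroup $Q$ produces explicitly the subgroup $H=\langle \phi\in Out_{\E}(Q)\mid Out_{P}(Q)\cap Out_{P}(Q)^{\phi}>1\rangle$ and asserts that it is strongly $Out_{P}(Q)$-embedded in $Out_{\E}(Q)$, which is the existence we want.

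For the reverse direction, suppose $Out_{\E}(Q)$ has a strongly $Out_{P}(Q)$-embedded subgroup $\overline{K}$. By the definition of strongly $Out_P(Q)$-embedded, we have $Out_P(Q)^{x}\leq \overline{K}$ for some $x$, and since conjugation preserves strong $D$-embedding, we may replace $\overline{K}$ by $\overline{K}^{x^{-1}}$ to assume $Out_P(Q)\leq\overline{K}$. Then Lemma \ref{lem strongly d-embedded}(b), applied inside $G=Out_{\E}(Q)$ with strongly closed subgroup $D=Out_P(Q)$, yields that the subgroup $H$ above is contained in $\overline{K}$, hence proper in $Out_{\E}(Q)$. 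Next I would translate this into the properness of $H_Q$ from Definition \ref{def essential subgroup}. Since $Q$ is $\E$-centric we have $C_P(Q)\leq Q$, and as in the proof of Proposition \ref{prop charac. of essentails}(b), for $\phi\in Aut_{\E}(Q)$ one checks $N_{\phi}(Q)>Q$ if and only if $Aut_P(Q)\cap Aut_P(Q)^{\phi}>Inn(Q)$ if and only if $Out_P(Q)\cap Out_P(Q)^{\phi}>1$ after passing to the quotient by $Inn(Q)$. Therefore $H_Q/Inn(Q)=H$ and so $H_Q$ is proper in $Aut_{\E}(Q)$.

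It remains to show $Q$ is reproductive. The definition of strongly $Out_P(Q)$-embedded requires $Out_P(Q)>1$, which (via $\E$-centricity) forces $N_P(Q)>Q$, so in particular $Q<P$. Being a proper fully normalized subgroup, $Q$ is reproductive by Corollary \ref{cor fully norm imp reproductive}, so together with the properness of $H_Q$, $Q$ satisfies both clauses of Definition \ref{def essential subgroup} and is $\E$-essential. The only point where a little care is required is the conjugation reduction to arrange $Out_P(Q)\leq \overline{K}$ and the centric-to-outer-automorphism translation, but neither presents a genuine obstacle; everything else is a direct appeal to Proposition \ref{prop charac. of essentails}, Lemma \ref{lem strongly d-embedded}, and Corollary \ref{cor fully norm imp reproductive}.
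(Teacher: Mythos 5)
Your proof is correct and follows essentially the same route as the paper: the forward direction is Proposition \ref{prop charac. of essentails}(b), and the reverse direction uses Lemma \ref{lem strongly d-embedded} to make $H=\langle \phi \in Out_{\E}(Q) \mid Out_{P}(Q)\cap Out_{P}(Q)^{\phi}>1 \rangle$ proper and then translates back to $H_Q$ via $\E$-centricity. Your extra care with the conjugation reduction to arrange $Out_P(Q)\leq \overline{K}$ and with verifying the reproductive clause only makes explicit details the paper leaves implicit.
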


\begin{proof}[\textbf{Proof.}]
	If $Q$ is essential then the result follows by Proposition \ref{prop charac. of essentails}. 	Suppose that $Out_{\E}(Q)$ has a strongly $Out_{P}(Q)$-embedded subgroup. Then $\langle \phi \in Out_{\E}(Q) \mid Out_{P}(Q)\cap Out_{P}(Q)^{\phi}>1 \rangle $ is proper in $Out_{\E}(Q)$ by Lemma \ref{lem strongly d-embedded}. It follows that $\langle \phi \in Aut_{\E}(Q) \mid  N_{\phi}(Q)>QC_P(Q) \rangle$ is proper in $Aut_{\E}(Q)$. Since $Q$ is $\E$-centric, we have $Q=QC_P(Q)$, and so $Q$ is essential. 
	\end{proof}

\begin{lemman} \label{lem N_{E}(Q) is also semisaturated }
	Let $\E$ be a semisaturated fusion system on a $p$-group $P$, and $Q$ be a fully normalized subgroup of $P$. Then the local fusion systems $N_{\E}(Q)$ and $C_{\E}(Q)$ are also semisaturated.
\end{lemman}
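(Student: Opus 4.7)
The plan is to build a saturated witness for $N_\E(Q)$ by first handling a fully $\F$-normalized conjugate of $Q$, and then transporting through the $\E$-isomorphism supplied by Corollary \ref{cor phi extend Hom(N_P(Q), N_P(R)) }. Let $(\F,S)$ witness $\E$ as semisaturated. Since $P$ is strongly $\F$-closed we have $Q^{\F}=Q^{\E}$, so I would pick $Q^*\in Q^{\E}$ that is fully $\F$-normalized; it is standard that $N_\F(Q^*)$ is then a saturated fusion system on $N_S(Q^*)$, and I expect the pair $(N_\F(Q^*),N_S(Q^*))$ to witness $N_\E(Q^*)$ as semisaturated on $N_P(Q^*)=P\cap N_S(Q^*)$. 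To verify this, observe that any $N_\F(Q^*)$-morphism is in particular an $\F$-morphism, so strong $\F$-closure of $P$ forces $N_P(Q^*)$ to be strongly $N_\F(Q^*)$-closed; moreover, a morphism $\psi:R\to R'$ with $R,R'\leq N_P(Q^*)$ lies in $N_\F(Q^*)$ iff it extends to some $\tilde\psi:RQ^*\to R'Q^*$ in $\F$ stabilising $Q^*$, and since $RQ^*,R'Q^*\leq P$, such an extension lies in $\F$ iff in $\E$; hence $N_\E(Q^*)=N_\F(Q^*)_{\mid \leq N_P(Q^*)}$.

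Next I would invoke Corollary \ref{cor phi extend Hom(N_P(Q), N_P(R)) } with $R=Q^*$ to produce $\phi\in Hom_\E(Q,Q^*)$ extending to $\overline\phi\in Hom_\E(N_P(Q),N_P(Q^*))$. The injectivity of $\overline\phi$ combined with $Q$ being fully $\E$-normalized forces $|N_P(Q)|=|N_P(Q^*)|$, making $\overline\phi$ a group isomorphism. Using Lemma \ref{iden2} to rewrite conjugation morphisms, one checks that $\overline\phi$ induces a fusion-system isomorphism $N_\E(Q)\cong N_\E(Q^*)$: a morphism $\psi:A\to A'$ of $N_\E(Q)$ with $\E$-extension $\tilde\psi:AQ\to A'Q$ stabilising $Q$ is sent to $\overline\phi^{-1}\psi\overline\phi:A\overline\phi\to A'\overline\phi$ with extension $\overline\phi^{-1}\tilde\psi\overline\phi$ defined on $(AQ)\overline\phi=A\overline\phi\cdot Q^*$ and stabilising $Q^*$, and the inverse construction via $\overline\phi^{-1}$ gives the reverse correspondence.

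Finally, I would transport the witness of $N_\E(Q^*)$ through this isomorphism: replace $N_P(Q^*)\leq N_S(Q^*)$ by an abstract copy of $N_P(Q)$ identified via $\overline\phi^{-1}$ to produce a $p$-group $T$ with $N_P(Q)\leq T$ together with a group isomorphism $\Phi:T\to N_S(Q^*)$ extending $\overline\phi$; pulling back $N_\F(Q^*)$ along $\Phi$ then yields a saturated fusion system $\mathcal G$ on $T$ for which $N_P(Q)$ is strongly $\mathcal G$-closed and $\mathcal G_{\mid \leq N_P(Q)}=N_\E(Q)$, supplying the required witness. The main technical obstacle will be the verification that $\overline\phi$ genuinely implements a fusion-system isomorphism $N_\E(Q)\to N_\E(Q^*)$; once this is in hand, the transport step is purely formal bookkeeping with the definition of semisaturation.
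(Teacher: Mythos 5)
Your proposal is correct and takes essentially the same route as the paper: choose a fully $\F$-normalized $\E$-conjugate $Q^*$ of $Q$, note that $(N_{\F}(Q^*),N_S(Q^*))$ witnesses $N_{\E}(Q^*)$ as semisaturated, and transport this along the extension $\overline\phi\in Hom_{\E}(N_P(Q),N_P(Q^*))$ supplied by the corollary on extending morphisms between normalizers, which (being surjective by full $\E$-normalization of $Q$) induces an isomorphism of fusion systems $N_{\E}(Q)\cong N_{\E}(Q^*)$. The only difference is that you make explicit the verification of this fusion-system isomorphism and the bookkeeping of transporting the saturated witness, which the paper leaves implicit.
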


\begin{proof}[\textbf{Proof.}]
	Let $\E$ be induced by $(\F,S)$ where $\F$ is saturated, and let $R\in Q^{\E}=Q^{\F}$ such that $R$ is fully $\F$-normalized.  There exists $\phi \in Hom_{\E}(Q,R)$ such that
	$\phi$ extends to an isomorphism $\overline \phi\in Hom_{\E}(N_P(Q),N_P(R))$ by Corollary  \ref{cor phi extend Hom(N_P(Q), N_P(R)) }.
	
	We first note that $N_{\F}(R)$ is saturated as $R$ is fully $\F$-normalized. Notice that $N_P(R)$ is strongly $N_{\F}(R)$-closed. Then it is easy to see that $N_{\E}(R)=N_{\F}(R)_{\mid \leq N_P(R)}$, and so $N_{\E}(R)$ is semisaturated. On the other hand, the isomorphism ${\overline \phi} ^{-1}:N_P(R)\to N_P(Q)$ induces an isomorphism of fusion systems from $N_{\E}(R)$ to $ N_{\E}(Q)$. It then follows that $ N_{\E}(Q)$ is semisaturated as desired. One can similarly obtain that $C_{\E}(Q)$ is semisaturated as well.
\end{proof}

The following lemma creates a bridge between saturated and semisatuated fusion systems.

\begin{lemman}\label{lem correspondence }
	Let $\E$ be a semisaturated fusion system on a $p$-group $P$ induced by $(\F,S)$ and  $Q\leq P$. The following hold:
	\begin{enumerate}
		
		\item[(a)]  $Q$ is strongly  $\E$-closed if and only if $Q$ is strongly $\F$-closed.
		\item[(b)] $Q \unlhd \E$ if and only if $Q\unlhd \F$, and in particular $P \unlhd \E$ if and only if $P\unlhd \F$.
		
	\end{enumerate}
\end{lemman}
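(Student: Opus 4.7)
The plan is to prove (a) and the forward direction of (b) by direct arguments using the strong closure of $P$, and then attack the backward direction of (b) via Alperin--Goldschmidt applied to $\F$ together with the structural results already established.

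For (a), the key observation is that since $P$ is strongly $\F$-closed, any $\phi\in Hom_{\F}(\langle x\rangle,S)$ with $x\in P$ satisfies $\phi(x)\in P$, so $\phi$ is already an $\E$-morphism; hence $x^{\F}=x^{\E}$ for every $x\in P$, and the two strong-closure conditions on $Q\leq P$ coincide. The forward direction of (b) is just as immediate: given $Q\unlhd\F$ and $\phi\in Hom_{\E}(A,B)$ with $A,B\leq P$, the $\F$-extension $\overline{\phi}\colon AQ\to BQ$ preserving $Q$ automatically lies inside $P$ and hence in $\E$.

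For the backward direction of (b), suppose $Q\unlhd\E$. By (a), $Q$ is strongly $\F$-closed, so $Q\unlhd S$ and consequently $Q^{\F}=\{Q\}$; in particular $Q$ is fully $\F$-normalized, and the normalizer subsystem $N_{\F}(Q)$ is a saturated fusion subsystem of $\F$ on $S$. I plan to show $N_{\F}(Q)=\F$, which is equivalent to $Q\unlhd\F$. By the Alperin--Goldschmidt fusion theorem applied to the saturated $\F$, this reduces to checking $Aut_{\F}(T)\subseteq N_{\F}(Q)$ for $T=S$ and for each $\F$-essential $T\leq S$. When $T=S$, every $\psi\in Aut_{\F}(S)$ preserves $Q$ setwise by strong closure, so the extension to $SQ=S$ is just $\psi$ itself. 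When $T\leq P$, $Aut_{\F}(T)$ coincides with $Aut_{\E}(T)$ (any $\F$-morphism with source and target in $P$ belongs to $\E$), and $Q\unlhd\E$ supplies the required extensions inside $\E\subseteq\F$.

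The main obstacle is an $\F$-essential $T$ with $T\not\leq P$. I plan to handle this as follows: given $\psi\in Aut_{\F}(T)$, the restriction $\psi|_{T\cap P}$ is an $\E$-automorphism of $T\cap P$ (because $P$ being strongly $\F$-closed makes $T\cap P$ a $\psi$-invariant subgroup), so $Q\unlhd\E$ produces an extension $\chi\colon (T\cap P)Q\to(T\cap P)Q$ in $\E$ with $\chi(Q)=Q$. Since $\psi$ and $\chi$ agree on $T\cap (T\cap P)Q=T\cap P$, the task becomes to glue them into a single $\F$-morphism $\widetilde\psi\colon TQ\to TQ$. I would do this using the receptivity of $T$ (valid because $T$ is fully $\F$-normalized as an essential) together with the extension axiom of the saturated $\F$, after which the strong closure of $Q$ forces $\widetilde\psi(Q)=Q$ automatically. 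Verifying that this gluing is realized by a genuine $\F$-morphism is the technical crux and the hardest step in the proof.
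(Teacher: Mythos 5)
Your part (a) and the implication $Q\unlhd\F\Rightarrow Q\unlhd\E$ are correct and agree with the paper's argument. The converse, however, is not proved: the step you yourself flag as the technical crux is a genuine gap rather than a routine verification. For an $\F$-essential $T\not\leq P$ you must extend $\psi\in Aut_{\F}(T)$ to an $\F$-morphism on $TQ$, but receptivity and the extension axiom only extend $\psi$ over $N_{\psi}\leq N_S(T)$; since $Q$ is merely normal in $S$ it need not normalize $T$, so elements of $Q$ need not induce automorphisms of $T$ at all, and there is no reason to have $TQ\leq N_{\psi}$ (indeed $TQ\not\leq N_S(T)$ in general). Likewise, the fact that $\psi$ and your map $\chi\in Aut_{\E}\bigl((T\cap P)Q\bigr)$ agree on $T\cap P$ does not even produce an abstract automorphism of $TQ$ restricting to both (agreement on the intersection of domains gives no gluing on the join of two subgroups), let alone one realized in $\F$. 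In effect, the assertion that $Aut_{\F}(T)$ lies in $N_{\F}(Q)$ for every essential $T$ is essentially the statement to be proved, and your outline supplies no mechanism for it precisely where it matters, namely at essentials not contained in $P$.

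The paper sidesteps this difficulty entirely. Assuming $Q\unlhd\E$, each term $Z_i(Q)$ of the upper central series of $Q$ is characteristic in $Q$, hence normal in $\E$, hence strongly $\E$-closed, hence strongly $\F$-closed by part (a); Aschbacher's criterion (\cite[Proposition 4.62]{Crv}), which says that a subgroup admitting a central series all of whose terms are strongly $\F$-closed is normal in the saturated system $\F$, then gives $Q\unlhd\F$. The hard fusion-theoretic work --- exactly the extension over essential subgroups not contained in $P$ on which your plan stalls --- is absorbed into that cited theorem. To complete your route you would either have to invoke that result directly (note that the paper's later corollary on central series for semisaturated systems is itself derived from this lemma, so it cannot be used here without circularity) or reprove a statement of comparable depth.
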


	\begin{proof}[\textbf{Proof.}]
For any $x\in Q$, we have $x^{\E}=x^{\F}$ as $Q\leq P$. Thus $(a)$ follows directly.

$(b)$ If $Q\unlhd \F$ then it is obvious that $Q\unlhd \E$. Thus, assume $Q\unlhd \E$ and consider the ascending central series of $Q$;

$$1=Q_0\leq Q_1\leq Q_2\ldots \leq Q_{n-1}\leq Q_n=Q.$$

Since $Q_i$ is characteristic in $Q$, we have $Q_i\unlhd \E$ and in particular, $Q_i$ is strongly $\E$-closed. It follows that $Q_i$ is strongly $\F$-closed by part (a). Then we get that $Q\unlhd \F$ by an Aschbacher's result (see \cite[Proposition 4.62]{Crv}).
	\end{proof}

\begin{corollaryn}\label{cor correspondence}
A $p$-group $P$ is strongly resistant in saturated fusion systems if and only if it is resistant in semisaturated fusion systems.
\end{corollaryn}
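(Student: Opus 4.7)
The plan is to reduce the corollary to Lemma \ref{lem correspondence }(b), which states that for a semisaturated fusion system $\E$ on $P$ induced by $(\F,S)$, one has $P\unlhd \E$ if and only if $P\unlhd \F$. With this bridge in hand, both directions are essentially a matter of unwinding the definitions of ``strongly resistant in saturated fusion systems'' and ``resistant in semisaturated fusion systems.''

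For the forward direction, I would assume $P$ is strongly resistant in saturated fusion systems and take an arbitrary semisaturated fusion system $\E$ on $P$. By Definition \ref{def semisaturated}, there exist an over $p$-group $S\geq P$ and a saturated fusion system $\F$ on $S$ such that $P$ is strongly $\F$-closed and $\E=\F_{\mid \leq P}$. Since $P$ is strongly resistant, $P\unlhd \F$, and then Lemma \ref{lem correspondence }(b) yields $P\unlhd \E$, as needed.

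For the reverse direction, I would assume $P$ is resistant in semisaturated fusion systems and let $\F$ be any saturated fusion system on some over $p$-group $S\geq P$ in which $P$ is strongly $\F$-closed. Set $\E:=\F_{\mid \leq P}$; by construction the triple $(\F,S,\E)$ witnesses that $\E$ is semisaturated on $P$. Hence $P\unlhd \E$ by hypothesis, and Lemma \ref{lem correspondence }(b) again promotes this to $P\unlhd \F$. Since $(S,\F)$ was arbitrary, $P$ is strongly resistant.

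There is no real obstacle here; all the content lies in Lemma \ref{lem correspondence }(b), which in turn rests on Aschbacher's criterion (\cite[Proposition 4.62]{Crv}) applied to the ascending central series of $P$. The only subtle point, worth flagging in the write-up, is that both ``resistant'' notions should be read with a universal quantifier (for every such fusion system the conclusion $P\unlhd -$ holds), so that the restriction $\F\mapsto \F_{\mid \leq P}$ and the extension data given by Definition \ref{def semisaturated} match up cleanly in the two directions.
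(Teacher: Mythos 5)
Your proof is correct and follows exactly the route the paper intends: the corollary is stated immediately after Lemma \ref{lem correspondence } precisely because it is the direct unwinding of the two definitions via part (b) of that lemma, which is what you do in both directions. Your remark about reading both resistance notions with a universal quantifier is also the right reading of the paper's (awkwardly phrased) definitions, so there is nothing to add.
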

%

\begin{theoremn}\label{thm gen  Aschbacher's result}
	Let $\E$ be a semisaturated fusion system on a $p$-group $P$ and $D$ be a strongly $\E$-closed subgroup of $P$. Then $D\unlhd \E$ if and only if $\Omega^*(D)$ has a subgroup series $$1=Q_0\leq Q_1\leq Q_2\ldots \leq Q_{n-1}\leq Q_n=\Omega^*(D)$$ such that 
	
	\begin{enumerate}
		\item[(a)] $Q_i$ is strongly $\E$-closed for $i=0,1,\ldots, n$.
		\item[(b)] $[Q_{i+1},D]\leq Q_i$ for $i=0,1,...,n-1.$
	\end{enumerate}

\end{theoremn}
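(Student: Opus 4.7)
Plan. For the forward direction, assume $D\unlhd \E$. I would take $Q_i:=Z_i(D)\cap \Omega^*(D)$, where $Z_i(D)$ is the $i$-th term of the upper central series of $D$. Each $Q_i$ is characteristic in $D$, and since $D\unlhd \E$ every $\E$-morphism extends to a morphism whose restriction to $D$ is an element of $Aut_{\E}(D)$; such automorphisms preserve characteristic subgroups of $D$, so each $Q_i$ is strongly $\E$-closed. The identity $[Z_{i+1}(D),D]\leq Z_i(D)$, combined with $\Omega^*(D)\unlhd D$ (so that $[Q_{i+1},D]\leq \Omega^*(D)$), gives $[Q_{i+1},D]\leq Q_i$. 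Nilpotency of the $p$-group $D$ yields $Z_n(D)=D$ for some $n$, whence $Q_n=\Omega^*(D)$.

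For the reverse direction, suppose such a series $1=Q_0\leq\cdots\leq Q_n=\Omega^*(D)$ exists. By Lemma \ref{lem correspondence }(b) it suffices to prove $D\unlhd \F$ for the saturated overlying fusion system $\F$. By the ``moreover'' clause of Theorem \ref{main thm A}, this reduces to showing that no $\F$-essential subgroup with respect to $D$ exists. Suppose for contradiction that $Q<D$ is such an essential. Then $Q$ is also essential in the semisaturated fusion system $\F_{\mid \leq D}$ via Corollary \ref{cor equivalent def of essential} (noting that fully $\F$-normalized implies fully $\F_{\mid \leq D}$-normalized by Lemma \ref{lem fully normalized w.r.t F}(c)), so Proposition \ref{prop charac. of essentails}(b) yields $Out_D(Q)\cap O_p(Out_{\F}(Q))=1$. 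I will derive a contradiction by establishing the opposite containment $Out_D(Q)\leq O_p(Out_{\F}(Q))$; together these force $Out_D(Q)=1$, which is impossible as the proper subgroup $Q$ of the $p$-group $D$ satisfies $N_D(Q)>Q$.

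To prove $Out_D(Q)\leq O_p(Out_{\F}(Q))$, I apply Lemma \ref{coprime action 3}(b) to the action of $A:=Aut_{\F}(Q)$ on the $p$-group $Q$, using the series $S_i:=Q_i\cap \Omega^*(Q)$ for $i=0,\ldots,n$ and the subgroup $B:=Aut_{N_D(Q)}(Q)\leq A$. Since $Q\leq D$, every small-order element of $Q$ is of small order in $D$, so $\Omega^*(Q)\leq \Omega^*(D)$, and hence $S_n=\Omega^*(D)\cap \Omega^*(Q)=\Omega^*(Q)$, confirming that this is a genuine series in $\Omega^*(Q)$. Each $S_i$ is $A$-invariant because $Q_i$ is strongly $\F$-closed and $\Omega^*(Q)$ is characteristic in $Q$. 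For $x\in S_{i+1}$ and $y\in N_D(Q)$, the commutator $[x,y]$ lies in $[Q_{i+1},D]\leq Q_i$ by hypothesis, and it lies in $\Omega^*(Q)$ because $y$ normalises the characteristic subgroup $\Omega^*(Q)$ of $Q$; hence $[S_{i+1},B]\leq S_i$. Lemma \ref{coprime action 3}(b) then gives $B\leq O_p(A)$, and passing to the quotient by $Inn(Q)$ yields $Out_D(Q)\leq O_p(Out_{\F}(Q))$.

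The principal obstacle is the choice of the stabilised series in the appeal to Lemma \ref{coprime action 3}(b): the naive candidate $Q_i\cap Q$ terminates at $\Omega^*(D)\cap Q$, which may be strictly larger than $\Omega^*(Q)$, whereas the lemma requires a series reaching exactly $\Omega^*(Q)$. The refinement $S_i:=Q_i\cap \Omega^*(Q)$ remedies this, while the commutator condition $[Q_{i+1},D]\leq Q_i$ is inherited because $\Omega^*(Q)$ is $N_D(Q)$-invariant.
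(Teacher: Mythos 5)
Your proposal is correct and takes essentially the same route as the paper: the forward direction uses the identical series $Q_i=Z_i(D)\cap\Omega^*(D)$, and the reverse direction rests on the same refined series $S_i=Q_i\cap\Omega^*(Q)$ fed into Lemma \ref{coprime action 3}(b) and played off against Proposition \ref{prop charac. of essentails}(b). The only cosmetic difference is that you package the reduction through the ``moreover'' clause of Theorem \ref{main thm A} applied to the strongly $\F$-closed subgroup $D$, whereas the paper works directly in $\E'=\E_{\mid \leq D}$ via Theorem \ref{thm Alperin 2}; this is harmless and non-circular, since Theorem \ref{main thm A} is proved independently of the present theorem.
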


\begin{proof} [\textbf{Proof.}]

First suppose that $D\unlhd \E$. Set $Q_i=Z_i(D)\cap \Omega^*(D)$ where $Z_i(D)$ is the $i$'th center of $D$. Since $Q_i$ is characteristic in $D$ and $D\unlhd \E$, we see that each $Q_i$ is strongly $\E$-closed. Moreover, $[Q_{i+1},D]\leq Z_i(D)\cap \Omega^*(D)=Q_i$. Thus,

$$1=Q_0\leq Q_1\leq Q_2\ldots \leq Q_{n-1}\leq Q_n=\Omega^*(P)$$ is the desired subgroups series of $\Omega^*(D).$

Now suppose that $\Omega^*(D)$ has such a subgroup series. Let $\E$ be induced by $(F,S)$ and set $\E'=\E_{\mid \leq D}.$ Hence, we see that $\E'=\F_{\mid \leq D}$, that is, $\E'$ is also semisaturated. Moreover,

$$D\unlhd \E' \iff D\unlhd \F \iff D\unlhd \E $$ by Lemma \ref{lem correspondence }(b). Thus, it is enough to show that $D\unlhd \E'$. Note that each $Q_i$ is also strongly $\E'$-closed by Lemma \ref{lem correspondence }(a).

Suppose that $Q$ is a fully normalized $\E'$-essential subgroup of $D$. Clearly, $ \Omega^*(Q)  \leq  \Omega^*(D)$. Set $$S_i= \Omega^*(Q)\cap Q_i$$ for $i=0,1,...,n.$

We get that each $S_i$ are $Aut_{\E}(Q)$-invariant as $\Omega^*(Q)$ is characteristic in $Q$ and each $Q_i$ is strongly $\E'$-closed. Set $A=Aut_D(Q)$. Then we see that

$$[S_{i+1},A]\leq S_i \ \text{for} \ i=0,1,...,n -1 $$ as $[Q_{i+1},D]\leq Q_i$.

 It follows that $Aut_D(Q)\leq O_p(Aut_{\E'}(Q))$ by Lemma \ref{coprime action 3}(b), and so $Out_D(Q)\leq O_p(Out_{\E'}(Q))$. On the other hand, $Out_D(Q)\cap O_p(Out_{\E'}(Q))=1$ by Proposition \ref{prop charac. of essentails}(b), and hence $Out_D(Q)=1$, which is a contradiction. It follows that  $D\unlhd \E'$ by Theorem \ref{thm Alperin 2}, which completes the proof.

\end{proof}
\begin{remark}
For any $p$-group $P$, let $\textbf{W}(P)$ be a characteristic subgroup of $P$, which is uniquely determined by $P$ (more precisely, let  $\textbf{W}$ be a characteristic $p$-functor).	Suppose that for any $p$-group $P$ , the following  hold:  \begin{enumerate}
\item $C_{Aut(P)}(\textbf{W}(P))$ is a $p$-group.	
\item  for all $Q\leq P$, $\textbf{W}(Q)\leq \textbf{W}(P)$.
\end{enumerate}
Then the previous theorem stays correct if we replace $\Omega^*$ with $\textbf{W}$ as we only use the above two properties of $\Omega^*$ in the proof. By taking $\textbf{W}(P)=P$ for all $p$-groups $P$, we have  Aschbacher's result (\cite[Proposition 4.62]{Crv}) for semisaturated fusion systems:
\end{remark}

\begin{corollaryn}\label{cor Aschbacher's lemma}
Let $\E$ be a semisaturated fusion system on a $p$-group $P$, and $D$ be a subgroup of $P$. Then $D\unlhd \E$ if and only if there is a central series  $$1=Q_0\leq Q_1\leq Q_2\ldots \leq Q_{n-1}\leq Q_n=D$$ of $D$ such that $Q_i$ is strongly $\E$-closed for $i=0,1,\ldots, n$.
\end{corollaryn}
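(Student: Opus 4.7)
The plan is to specialise the functorial extension of Theorem \ref{thm gen Aschbacher's result} supplied by the preceding remark to the characteristic subgroup functor $\mathbf{W}(P)=P$. Both hypotheses imposed on $\mathbf{W}$ are trivial in this case: $C_{Aut(P)}(P)=1$ is (vacuously) a $p$-group, and $\mathbf{W}(Q)=Q\leq P=\mathbf{W}(P)$ for every $Q\leq P$. With this choice $\mathbf{W}(D)=D$, so the generalised theorem reads: $D\unlhd \E$ if and only if $D$ admits a subgroup series $1=Q_0\leq\cdots\leq Q_n=D$ with each $Q_i$ strongly $\E$-closed and $[Q_{i+1},D]\leq Q_i$. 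Since the latter condition is exactly what defines a central series of $D$, this is the statement of the corollary.

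For the forward implication a canonical witness is the upper central series $Q_i=Z_i(D)$: as $D$ is a nilpotent $p$-group the series terminates at $D$; each $Z_i(D)$ is characteristic in $D$, hence strongly $\E$-closed once $D\unlhd \E$; and $[Z_{i+1}(D),D]\leq Z_i(D)$ by definition. For the reverse implication I would replay the proof of Theorem \ref{thm gen Aschbacher's result} verbatim: pass to $\E'=\E_{\mid \leq D}$, which is again semisaturated, use Lemma \ref{lem correspondence } to reduce to showing $D\unlhd \E'$, and observe that for any fully normalized $\E'$-essential $Q\leq D$ the subgroups $S_i=Q\cap Q_i$ are $Aut_{\E'}(Q)$-invariant (since each $Q_i$ is strongly $\E'$-closed) and satisfy $[S_{i+1},Aut_D(Q)]\leq S_i$ with $S_n=Q$. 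This would force $Aut_D(Q)\leq O_p(Aut_{\E'}(Q))$, contradicting Proposition \ref{prop charac. of essentails}(b); so no such essential $Q$ exists and Theorem \ref{thm Alperin 2} delivers $D\unlhd \E'$.

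The only point that needs more than direct quotation is the verification that Lemma \ref{coprime action 3}(b) survives the substitution $\Omega^*(Q)\to Q$: namely, any $B\leq A\leq Aut(Q)$ that stabilises an ascending series of $Q$ itself must lie in $O_p(A)$. Any $p'$-element $a$ stabilising such a series satisfies $[Q,a]=[Q,a,a,\ldots,a]=1$ by iterated application of Lemma \ref{coprime action}(b), hence $a$ centralises $Q$ and must be trivial. I do not expect any genuine obstacle beyond this check; as the remark anticipates, the corollary amounts to bookkeeping on top of Theorem \ref{thm gen Aschbacher's result}.
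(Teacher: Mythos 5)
Your proposal is correct and follows exactly the paper's route: the paper obtains this corollary precisely by invoking the remark preceding it, replacing $\Omega^*$ with the functor $\mathbf{W}(P)=P$ in Theorem \ref{thm gen  Aschbacher's result}, which is what you do. Your extra verifications (upper central series for the forward direction, the substitute for Lemma \ref{coprime action 3}(b)) are just the details the remark leaves implicit, and they check out.
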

%

\begin{lemman}\label{lem E/Q is semisaturated}
	Let $\E$ be a semisaturated fusion system on a $p$-group $P$ and $Q\leq P$ be strongly $\E$-closed. Then $\E/Q$ is a semisaturated fusion system on $P/Q$.
	
\end{lemman}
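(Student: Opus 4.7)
The plan is to exhibit an explicit saturated envelope $(\F/Q,S/Q)$ that realizes $\E/Q$ as semisaturated. Start by using Definition~\ref{def semisaturated} to fix $(\F,S)$ with $\F$ saturated on $S$, $P$ strongly $\F$-closed in $S$, and $\E=\F_{\mid\leq P}$. Since $Q\leq P$ is strongly $\E$-closed, Lemma~\ref{lem correspondence }(a) tells us that $Q$ is strongly $\F$-closed in $S$ as well, so the quotient fusion system $\F/Q$ on $S/Q$ is well-defined. By the standard fact that the quotient of a saturated fusion system by a strongly closed subgroup is saturated, $\F/Q$ is saturated.

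Next I would verify the three conditions of Definition~\ref{def semisaturated} for $\E/Q$ with respect to $(\F/Q,S/Q)$. Condition (a) is the saturation of $\F/Q$ just noted. For (b), if $xQ\in P/Q$ and $\phi$ is a morphism of $\F/Q$ moving $xQ$ to some element of $S/Q$, lift $\phi$ to $\tilde\phi\in\F$; then $\tilde\phi(x)\in S$, but $P$ is strongly $\F$-closed, so $\tilde\phi(x)\in P$ and the image in $S/Q$ lies in $P/Q$. Hence $P/Q$ is strongly $\F/Q$-closed.

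The main work is condition (c), namely $\E/Q=(\F/Q)_{\mid\leq P/Q}$. Take subgroups $R/Q$ and $T/Q$ of $P/Q$ (so $Q\leq R,T\leq P$). A morphism in $(\F/Q)_{\mid\leq P/Q}$ between them comes from some $\tilde\phi\in Hom_{\F}(R,S)$ with $\tilde\phi(R)\leq T$; since $R\leq P$ and $P$ is strongly $\F$-closed, $\tilde\phi(R)\leq P$, so $\tilde\phi\in Hom_{\E}(R,T)$ and thus induces a morphism in $\E/Q$. Conversely, a morphism in $\E/Q$ between $R/Q$ and $T/Q$ comes from a morphism in $\E\subseteq \F$, hence gives a morphism in $\F/Q$ between these subgroups of $P/Q$. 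This proves the required equality and completes the verification.

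The only genuinely subtle step is (c): one must check that restricting the quotient to $P/Q$ does not add or lose morphisms beyond those induced by $\E$. The containment $(\F/Q)_{\mid\leq P/Q}\subseteq \E/Q$ is the direction that uses strong $\F$-closure of $P$ to force lifts of morphisms between subgroups of $P/Q$ to stay inside $P$; the reverse inclusion is formal. Everything else is bookkeeping about how subgroups above $Q$ correspond to subgroups of the quotient, plus the invocation of the standard quotient-of-saturated-is-saturated result.
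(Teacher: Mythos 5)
Your proposal is correct and follows essentially the same route as the paper: pass to the envelope $(\F/Q,\,S/Q)$, use Lemma \ref{lem correspondence }(a) to see $Q$ is strongly $\F$-closed, invoke the standard result that $\F/Q$ is saturated, and check that $P/Q$ is strongly $\F/Q$-closed with $\E/Q=(\F/Q)_{\mid\leq P/Q}$. The only difference is that you spell out the last two verifications (which the paper dismisses as immediate), and your lifting arguments there are sound.
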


\begin{proof}[\textbf{Proof.}]
	Let $\E$ be induced by $(\F,S)$ where $\F$ is saturated. Then $Q$ is strongly $\F$-closed  by Lemma \ref{lem correspondence }(a). Hence, $\overline \F=\F/Q$ is saturated by \cite[Proposition 5.11]{Crv}. Clearly, $\overline P=P/Q$ is strongly $\overline \F$-closed.  It follows that $\overline \E=\E/Q$ is semisaturated as $\overline \E=\overline \F_{\mid \leq \overline P}$.
\end{proof}

The following lemma is well known for saturated fusion systems. We generalize this to semisaturated fusion systems. In fact, the same proof also work for quasisaturated fusion systems yet we do not need this.

\begin{lemman}\label{lem U normal E iff in all essential}
	Let $\E$ be a semisaturated fusion system on a $p$-group $P$, and let $U$ be strongly $\E$-closed subgroup of $P$. Then $U\unlhd \E$ if and only if $U$ is contained in all of $\E$-essential subgroups of $P$.
\end{lemman}
\begin{proof}[\textbf{Proof.}]
	The ``if" part is a direct consequences of Theorem \ref{thm Alperin 2}, so let us prove the other direction so assume that $U\unlhd \E$. If there is no $\E$-essential subgroup, there is nothing to prove. Hence, let $Q$ be an $\E$-essential subgroup.
	We first claim that $Aut_U(Q)\unlhd Aut_{\E}(Q)$:
	
	Let $\phi\in Aut_U(Q)$ and $\psi\in Aut_{\E}(Q)$. We need to show that $\phi^{\psi}\in Aut_U(Q)$. Clearly, there exists $x\in U$ such that $\phi=c_x$. Moreover, we observe that $\psi$ extends to $\overline \psi \in Hom_{\E}(QU,P)$ such that $\overline \psi$ leaves invariant $U$ as a consequence of normality of $U$ in $\E$. Then we get:
	$$\phi^{\psi}=c_x^{\psi}=c_{x \overline \psi}$$
	
	by applying Lemma \ref{iden2}. Notice that $x\overline \psi  \in U$, and so $c_{x \overline \psi}=\phi^{\psi}\in Aut_U(Q)$, which proves the claim.
	
	Now recall that $H_Q=\langle \phi \in Aut_{\E}(Q)\mid N_{\phi}(Q) >Q \rangle$ is proper in $Aut_{\E}(Q)$ as $Q$ is essential. Thus, there exists $\phi \in Aut_{\E}(Q)$ such that $N_{\phi}(Q)=Q$. It follows that
	
	$$Aut_{P}(Q)^{\phi}\cap Aut_P(Q)=Inn(Q)$$
	which yields that $Aut_U(Q)\leq Inn(Q)$ as $Aut_U(Q)\unlhd Aut_{\E}(Q)$ and $Aut_U(Q)\leq Aut_P(Q).$ Consequently, we obtain that $U\leq QC_P(Q)=Q$ where the last equality follows from Lemma \ref{lem ess imp f-centric}.

\end{proof}

\begin{corollaryn}\label{cor P normal E iff no essential}
	Let $\E$ be a semisaturated fusion system on a $p$-group $P$.  Then $P\unlhd \E$ if and only if there is no $\E$-essential subgroup of $P$.
\end{corollaryn}

\begin{proof}[\textbf{Proof.}]
	Recall that essentail subgroups must be proper in $P$. The rest directly follows from the previous lemma.
	
	
\end{proof}

\subsection{Proof of Theorems \ref{main thm A}, \ref{A' theroem}  and \ref{main thm C}} In this subsection, we give the proofs of three main theorems and their corollaries.

\begin{proof}[\textbf{Proof of Theorem A}]
Let $\E=\F_{\mid \leq P}$. First we note the following equivalences:

\begin{enumerate}
	\item[$\bullet$] For $Q\leq P$, we have $Q^{\E}=Q^{\F}$.
	\item[$\bullet$] $\F$-centric subgroups with respect to $P$ are exactly $\E$-centric subgroups of $P$.
	
	\item[$\bullet$] Let  $Q$ be fully $\F$-normalized subgroup of $P$. Then  $Q$ is an $\E$-essential subgroup of $P$ if and only if $Q$ is an $\F$-essential subgroup with respect $P$ by Proposition \ref{prop charac. of essentails}, Lemma \ref{lem fully normalized w.r.t F}(c) and Corollary \ref{cor equivalent def of essential}.
\end{enumerate}

Let $\mathfrak U$ be an $\F$-conjugacy representative of proper fully $\F$-normalized subgroups of $P$. By  Corollary \ref{cor fully norm imp reproductive} and Lemma \ref{lem fully normalized w.r.t F}(c), we have $\mathfrak U \subseteq \E^r$. Moreover, $\mathfrak U$ is also $\E$-conjugacy representative of $\E^r$ as each proper subgroup has an $\E$-conjugate lying in $\mathfrak U$. Now set $$\mathfrak{E}=\{Q\in \mathfrak{U}\mid Q \textit{ is $\E$-essential}  \}.$$ Then $\mathfrak E$ is a main collection whose each member is fully $\F$-normalized. Next we see that each member of $\mathfrak E$ is also $\F$-essential subgroup with respect to $P$. We have  $\E=\langle Aut_{\E}(Q) \mid  Q\in \mathfrak E \ or \ Q=P  \rangle$ by Theorem \ref{thm Alperin 2}. Moreover, we also know that $Aut_{\E}(Q)=Aut_{\F}(Q)$, and so Theorem A follows.
\end{proof}

 \begin{proof}[\textbf{Proof of Theorem B}]
Now we are going to show that $P\unlhd \F$ if and only if there is no $\F$-essential subgroup with respect to $P$.

Suppose first that $P\unlhd \F$. Then we get that $P\unlhd \E$ by Lemma \ref{lem correspondence }, and so there is no fully formalized $\E$-essential subgroup by Corollary \ref{cor P normal E iff no essential}. Then we see that there is no $\F$-essential subgroup with respect to $P$, which completes the proof for this part.

Now suppose that there is no $\F$-essential subgroup with respect to $P$. Then it is easy to observe that there exists a main essential collection $\mathfrak{E}$ (as defined in the previous proof) which is empty, and so $P\unlhd \E$ by Theorem \ref{thm Alperin 2}. Then we get $P\unlhd \F$ by Lemma \ref{lem correspondence }, which completes the proof. 
 \end{proof}

We also note that the second part of the above proof can be obtained by applying Theorem A and Lemma \ref{lem correspondence } for this part.

\begin{proof}[\textbf{Proof of Theorem D}]
	The proof of theorem directly follows from Theorem \ref{thm gen  Aschbacher's result} as saturated fusion systems are also semisaturated.
\end{proof}

\begin{proof}[\textbf{Proof of Corollary E}]
	Let $\F$ be a saturated fusion system on $S$ and let $P$ be a strongly $\F$-closed subgroup of $S$.
Let us suppose that $P$ is of odd type and $\Omega(P)\leq Z(P)$. Notice that $\Omega(P)$ is equal to the set $\{x\in P \mid x^p=1\}$ as $\Omega(P)$ is abelian. It follows that $\Omega(P)$ is strongly $\F$-closed as well. Moreover, $[\Omega(P),P]=1$. It then follows that $P\unlhd \F$ by Theorem \ref{main thm C}.
\end{proof}

\begin{proof}[\textbf{Proof of Theorem F}]
	As in our corollary, let us assume that $G$ is a group and $S\in Syl_p(G)$, and $D$ be a strongly closed subgroup in $ S$ with respect to $G$.
	
	We shall prove that $G$ is $p$-nilpotent if and only if $N_G(U)$ is $p$-nilpotent for all $U\leq D$ such that $C_D(U)\leq U$.

Firstly we observe that if $G$ is $p$-nilpotent then all subgroups of $G$ are $p$-nilpotent, and in particular each $N_G(U)$ is $p$-nilpotent, as desired.

Now assume $N_G(U)$ is $p$-nilpotent for all $U\leq D$ such that $C_D(U)\leq U$. Set $\F=\F_S(G)$, which is the fusion system induced by $G$ on $S$. Notice that $D$ is strongly $\F$-closed subgroup of $S$. 

We first claim that there is no $\F$-essential subgroup with respect to $D$. Assume the contrary, let $V$ be an $\F$-essential subgroup with respect to $D$. Then, we see that $C_D(V)\leq V$ as $V$ is $\F$-centric subgroup with respect to $D$, and so $N_G(V)$ is $p$-nilpotent by our hypothesis. It follows that $Aut_{\F}(V)\cong N_G(V)/C_G(V)$ is a $p$-group, which contradicts to the fact that  $Aut_{\F}(V)$ has a strongly $Aut_D(V)$-embedded subgroup. This contradiction proves our claim, and so $D\unlhd \F$, that is, $N_G(D)$ controls $G$-fusion in $S$ by Theorem \ref{A' theroem}. By our hypothesis, $N_G(D)$ is also $p$-nilpotent, and so $G$ is $p$-nilpotent as desired.

\end{proof}

\subsection{Examples}
The following example illustrates the existence of semisaturated fusion system that is not saturated.

\begin{example}
Let $G=S_4$, the symmetric group on four letters, and $S\in Syl_2(G)$. Set $P=V_4$, the klein four group and $\F=\F_S(G)$. Notice that $P\unlhd \F$ as $P\unlhd G$, and in particular $P$ is strongly $\F$-closed. Let $\E=\F_{\mid \leq P}$. Then clearly $\E$ is a semisaturated fusion system on $P$. However, $\E$ is not saturated as $Aut_P(P)\cong 1$ which is not a Sylow $2$-subgroup of $Aut_{\E}(P)=Aut_{\F}(P)\cong S_4/P\cong S_3$.
\end{example}

Now we shall see that there are some groups having strongly $D$-embedded subgroups which do not have a strongly $p$-embedded subgroup.

\begin{example}
	Let $G=S_3 \times C_2$, where $C_2$ denote the cyclic group of order $2$. Set $S=\langle (1,2) \rangle\times C_2 $ and $D=\langle (1,2)\rangle  \times 1 $. Clearly, $S$ is a Sylow $2$-subgroup of $G$ and $D$ is strongly closed in $S$ with respect to $G$. Moreover, $S$ is a strongly $D$-embedded subgroup of $G$ as it can be easily checked. However, $G$ do not have a strongly $2$-embedded subgroup.
	
\end{example}

In the context of direct products, the conclusion of Theorem A can be easily  observed, although  its implications may be considered trivial in this specific instance.
\begin{example}
Let $\F_1$ and $\F_2$ denote saturated fusion systems on $S_1$ and $S_2$, respectively. We can construct a saturated fusion system $\F = \F_1 \times \F_2$ on $S = S_1 \times S_2$. Observe that both $S_1$ and $S_2$ are strongly $\F$-closed subgroups of $S$. In this context, $\F$-essential subgroups with respect to $S_1$ are precisely the $\F_1$-essential subgroups of $S_1$, but none of them are $\F$-essential since they do not contain their centralizers in $S$. However, we can decompose an $\F$-isomorphism in $S_1$, as predicted by Theorem A, by the means of $\F$-essential subgroups with respect to $S_1$.
	
\end{example}

\section{Applications}

In this subsection, we will delve into the application of the theory of semisaturated fusion systems, which we have been developing so far, to specific classes of $p$-groups. Our primary goal is to obtain a proof for Theorem C by combining our understanding of semisaturated fusion systems with some coprime action results.

We will adapt and employ techniques found in \cite{Crv2}, \cite{Drv} and \cite{Stan} to facilitate our investigation. However, due to the less restrictive nature of the automorphism group of essential subgroups in semisaturated fusion systems compared to saturated fusion systems, our proofs will be more intricate and demanding.

To overcome these challenges, we will introduce and explore new techniques related to coprime action and its utility in the study of fusion systems. These coprime action techniques will not only help to simplify our proofs but also lead to novel insights that are valuable in their own right.

We hope that these techniques and the approach through semisaturated fusion systems could lead the discovery of new family of resistant and strongly resistant $p$-groups.
\vspace{0.2cm}

Now we start recalling a definition and proving a lemma, which are frequently addressed throughout the section:\vspace{0.2cm}

A group is called \textbf{$p$-closed} if it has a unique Sylow $p$-subgroup. Note that all subgroups and homomorphic images of a $p$-closed group are $p$-closed. On the other hand, if a group $G$ is not $p$-closed, then $G/N$ is not $p$-closed as well for every normal $p$-subgroup $N$ of $G$.  

\begin{lemman}\label{key lemma}
	Let $\E$ be a semisaturated fusion system on a $p$-group $P$. If $Q$ is a fully normalized essential subgroup of $P$ then  $Aut_{\E}(Q)$ is not $p$-closed.
\end{lemman}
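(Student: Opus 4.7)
The plan is to argue by contradiction: assume $Aut_{\E}(Q)$ is $p$-closed, and derive that $N_P(Q)=Q$, which will violate the fact that a proper subgroup of a $p$-group is strictly smaller than its normalizer. The two inputs I will lean on are Proposition \ref{prop charac. of essentails} (which records the structural consequences of $Q$ being a fully normalized essential subgroup) and the elementary observation, already highlighted just before the lemma, that quotients of $p$-closed groups are $p$-closed.

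Concretely, first I would note that $Q<P$ (this is built into essentiality, since $Q\in \E^r$ requires $Q<P$), so in particular $Q<N_P(Q)$. Assume toward a contradiction that $Aut_{\E}(Q)$ is $p$-closed. Since $Inn(Q)$ is a normal subgroup of $Aut_{\E}(Q)$, the quotient $Out_{\E}(Q)=Aut_{\E}(Q)/Inn(Q)$ is also $p$-closed. Its unique Sylow $p$-subgroup is therefore $O_p(Out_{\E}(Q))$, and hence every $p$-subgroup of $Out_{\E}(Q)$ is contained in $O_p(Out_{\E}(Q))$. Applying this to the $p$-subgroup $Out_P(Q)$ gives
\[
Out_P(Q)\leq O_p(Out_{\E}(Q)).
\]

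Now I would invoke Proposition \ref{prop charac. of essentails}(b), which states $Out_P(Q)\cap O_p(Out_{\E}(Q))=1$. Combined with the previous inclusion, this forces $Out_P(Q)=1$, i.e.\ $Aut_P(Q)=Inn(Q)$, i.e.\ $N_P(Q)\leq Q\,C_P(Q)$. By Proposition \ref{prop charac. of essentails}(a), $Q$ is $\E$-centric, so $C_P(Q)\leq Q$ and therefore $N_P(Q)=Q$, contradicting $Q<N_P(Q)$. This contradiction shows $Aut_{\E}(Q)$ cannot be $p$-closed.

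I do not foresee a real obstacle here: the work is essentially done in Proposition \ref{prop charac. of essentails}. The only point that needs a touch of care is the quotient step, namely that $Out_{\E}(Q)$ inherits $p$-closedness from $Aut_{\E}(Q)$ and that the image of $Aut_P(Q)$ under the quotient map lands inside $O_p(Out_{\E}(Q))$; both are immediate from the definition of $p$-closed and the fact that $Inn(Q)$ is a normal $p$-subgroup of $Aut_{\E}(Q)$.
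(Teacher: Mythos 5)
Your proof is correct, and it reaches the contradiction by a slightly different route than the paper. The paper argues: if $Aut_{\E}(Q)$ were $p$-closed with normal Sylow $p$-subgroup $S$, then $Aut_P(Q)$, being strongly closed in $S$ with respect to $Aut_{\E}(Q)$ (Lemma \ref{lem fully norm imps strongly closed}), would be normal in $Aut_{\E}(Q)$; then Proposition \ref{prop charac. of essentails}(c) (the core condition $Inn(Q)=Core_{Aut_{\E}(Q)}(Aut_P(Q))$) forces $Aut_P(Q)=Inn(Q)$, and parts (a) yield $N_P(Q)=QC_P(Q)=Q$, a contradiction. You instead pass to $Out_{\E}(Q)$, use that a quotient of a $p$-closed group is $p$-closed to place $Out_P(Q)$ inside $O_p(Out_{\E}(Q))$, and then invoke Proposition \ref{prop charac. of essentails}(b) (the intersection $Out_P(Q)\cap O_p(Out_{\E}(Q))=1$) to get $Out_P(Q)=1$; the endgame $Aut_P(Q)=Inn(Q)\Rightarrow N_P(Q)=QC_P(Q)=Q<N_P(Q)$ is then identical. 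So the two arguments lean on different parts of the same proposition: the paper invokes strong closure directly together with part (c), whereas you use part (b), which in the paper was itself derived from that strong-closure lemma via the strongly embedded machinery. Your version is marginally more self-contained at the point of use (it needs only the proposition plus the elementary remark about quotients of $p$-closed groups, with no back-reference to the proof of Lemma \ref{lem P normal E iff no essential}), at the cost of sitting one layer further downstream of the strong-closure input; logically both are complete and of the same length.
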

\begin{proof}[\textbf{Proof.}]
	Suppose that  $Aut_{\E}(Q)$ is $p$-closed, and let $S$ be the normal Sylow $p$-subgroup of $Aut_{\E}(Q)$. Since $Aut_P(Q)$ is strongly closed in $S$ with respect to $Aut_{\E}(Q)$ by Lemma \ref{lem fully norm imps strongly closed}, we get that $Aut_P(Q)\unlhd Aut_{\E}(Q)$. Then $Inn(Q)=Core_{Aut_{\E}(Q)}(Aut_P(Q))=Aut_{P}(Q)$ by Proposition \ref{prop charac. of essentails}(c), and so $QC_P(Q)=N_P(Q)$. We get $C_P(Q)\leq Q$  by Proposition \ref{prop charac. of essentails}(a), and so $Q=N_P(Q)$. Hence, $Q=P$ which is a contradiction.

\end{proof}

\subsection{Generalized extraspecial $p$-groups}

Recall that a $p$-group $P$ is said to be \textbf{generalized extraspecial} $p$-group if $\Phi(P)=P'\cong C_p.$ The following result shows that generalized extraspecial $p$-groups are resistant in semisaturated fusion systems with two possible exceptional cases.

\begin{theoremn}\label{thm gen. extraspacial}
	Let $\E$ be a semisaturated fusion system on a generalized extraspecial $p$-group $P$. Then $\E=N_{\E}(P)$ unless $P=E\times A$  for an elementary abelian group $A$ where $E$ is a dihedral group of order $8$ when $p=2$ or is an extraspecial $p$-group of order $p^3$ with exponent $p$ when $p>2$.
\end{theoremn}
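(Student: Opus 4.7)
Assume for contradiction that $\E \neq N_\E(P)$; by Lemma \ref{lem P normal E iff no essential}, choose a fully $\E$-normalized $\E$-essential subgroup $Q<P$. The goal is to show that such a $Q$ can only exist when $P$ has the exceptional form. By Proposition \ref{prop charac. of essentails}(a), $Q$ is $\E$-centric, so $Z(P) \le C_P(Q) \le Q$ and in particular $P' \le Q$. Since $P/P' = P/\Phi(P)$ is elementary abelian, the quotient $Q/P'$ embeds into it and is also elementary abelian, so $\Phi(Q) \in \{1, P'\}$, $Q' \in \{1, P'\}$, and $Q$ has nilpotency class at most $2$. Also $N_P(Q) > Q$, since $\mathrm{Out}_P(Q) > 1$ is required for a strongly $\mathrm{Out}_P(Q)$-embedded subgroup to exist.

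By Lemma \ref{key lemma}, $\mathrm{Aut}_\E(Q)$ is not $p$-closed, so I fix a nontrivial $p'$-element $\alpha \in \mathrm{Aut}_\E(Q)$; by Lemma \ref{coprime action 3}(a), $\alpha$ acts nontrivially on both $Q/\Phi(Q)$ and $\Omega^*(Q)$. The crucial observation is that the commutator pairing into $P' \cong C_p$ is preserved by $\alpha$ up to a scalar in $\mathrm{Aut}(P') \cong \mathbb{F}_p^\times$, which is a $p'$-group. Combining this equivariance with Proposition \ref{prop charac. of essentails}(b) ($\mathrm{Out}_P(Q) \cap O_p(\mathrm{Out}_\E(Q))=1$) and a Maschke decomposition of $Q/\Phi(Q)$ under $\langle\alpha\rangle$, I would extract a $2$-dimensional $\alpha$-invariant $\mathbb{F}_p$-subspace $V$ of $Q/\Phi(Q)$ on which $\alpha$ acts nontrivially and on which $\mathrm{Out}_P(Q)$ acts as a nontrivial $p$-subgroup of $\mathrm{GL}(V)$. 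The constraints force the induced pairing on $V$ to carry an $\mathrm{SL}_2(\mathbb{F}_p)$-module structure, and lifting $V$ back to $P$ produces a small extraspecial subgroup $E \le P$ with $E \cong D_8$ (when $p=2$) or $E \cong p^{1+2}_+$ of exponent $p$ (when $p$ is odd).

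It remains to convert $E$ into a direct factor of $P$. Since $E' = P'$ is central, $E \unlhd P$; non-degeneracy of the commutator form on $P/Z(P)$ yields $E \cap C_P(E) = Z(E) = P'$ and $P = E \cdot C_P(E)$. Writing $A = C_P(E)$, a further coprime-action argument, pairing any hypothetical element of order $p^2$ in $A$ against elements of $E$ through the commutator into $P'$ to derive incompatibility with the $\alpha$-action, forces $A$ to be elementary abelian. Then $P = E \times A_0$ with $A_0$ an elementary abelian complement to $P'$ in $A$, contradicting the hypothesis on $P$ and establishing $\E = N_\E(P)$.

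\textbf{Main obstacle.} The heart of the proof is the extraction of $V$ and $E$ in the second paragraph, where three ingredients must be played off against each other simultaneously: the coprime $\alpha$-action on $Q/\Phi(Q)$ and on $\Omega^*(Q)$, the $\mathbb{F}_p^\times$-equivariance of the commutator pairing into the one-dimensional $P'$, and the rigidity imposed by Proposition \ref{prop charac. of essentails}(b,c). Separating into the subcases $Q$ abelian of exponent $p$, $Q$ abelian of exponent $p^2$ (where $\mho_1(Q) = P'$ introduces extra rigidity that must be resolved by hand), and $Q$ nonabelian with $Q' = P'$, plus handling the $p=2$ subcase where $\Omega^*(Q) = \Omega_2(Q)$ for $Q$ not of odd type, is the main technical work; and verifying that $C_P(E)$ is elementary abelian (ruling out a central-product with a $C_{p^2}$ summand) requires a second coprime-action input that parallels the first.
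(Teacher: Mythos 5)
There is a genuine gap: the two steps on which the whole theorem rests are asserted rather than proved. First, the extraction of $E$. You say that the equivariance of the commutator pairing, Proposition \ref{prop charac. of essentails}(b,c) and a Maschke decomposition ``force'' an $SL_2(\mathbb{F}_p)$-structure on a two-dimensional piece $V$ of $Q/\Phi(Q)$, and that lifting $V$ yields $E\cong D_8$ or an extraspecial group of \emph{exponent $p$}. No argument is given for either claim, and the exponent claim is precisely where the difficulty of the theorem sits: nothing in your setup distinguishes $p^{1+2}_+$ from the exponent-$p^2$ extraspecial group, and ruling the latter out occupies most of the paper's proof (it builds $SL(2,p)$ inside $Out_{\E}(M)$ for an elementary abelian maximal subgroup $M$, locates a central involution $\varphi$, extends it to an order-$2$ element $\alpha$ of $Aut_{\E}(P)$, invokes the counting result (Proposition \ref{prop counting theorem}) to produce an $\alpha$-invariant cyclic subgroup outside $M$, and finishes with a coprime-action contradiction against $C_P(M)=M$). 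Your lifting mechanism is also flawed in the case you yourself list as a subcase: when $Q$ is abelian (which is exactly the configuration that actually occurs --- in the paper the relevant essential subgroup is the elementary abelian maximal subgroup $M=C_P(R)$), any lift of a subspace of $Q/\Phi(Q)$ inside $Q$ is abelian and can never be $D_8$ or $p^{1+2}_+$; producing $E$ requires input from outside $Q$, which your plan does not supply. Second, the reduction of $A=C_P(E)$ to an elementary abelian group: since $[C_P(E),E]=1$, the proposed ``pairing of a hypothetical element of order $p^2$ in $A$ against elements of $E$ through the commutator'' is identically trivial, so as described this step cannot rule out, e.g., a central product of $E$ with $C_{p^2}$; some other argument is needed, and none is indicated.

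For comparison, the paper's route avoids your representation-theoretic extraction altogether at the structural stage: if $\E\neq N_{\E}(P)$ then $P'$ is not strongly $\E$-closed (Corollary \ref{cor Aschbacher's lemma}); a conjugate $R=P'\phi\neq P'$ cannot be normal in $P$ (Lemma \ref{lem normals are conjuagate}); transporting $\phi^{-1}$ to an automorphism of $M=C_P(R)$ shows $P'$ is not characteristic in $M$, hence $\Phi(M)=1$ and $M$ is an elementary abelian maximal subgroup; then the purely group-theoretic Lemma \ref{lem generalized extra special} gives $P=E\times A$ with $E$ nonabelian of order $p^3$ and $A$ elementary abelian. Only after this does the fusion-theoretic heavy lifting begin, and it is aimed exactly at the point your proposal treats as automatic: showing that for odd $p$ the factor $E$ must have exponent $p$. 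Until you can supply proofs of the $SL_2$/exponent-$p$ extraction and of the elementary abelianity of the complement, the proposal is a plan with its core missing rather than a proof.
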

We need several preliminary results to prove our theorem and we shall start with a result which is interesting for its own right.

Let $A$ be a group acting on a $p$-group $P$ and let $H$ be an $A$-invariant subgroup of $P$. We \textbf{denote} the set of all $A$-invariant cyclic subgroups of $P$ that are not contained in $H$ by $\mathcal C_{A}(G,H)$.
We give some sufficient conditions for $\mathcal C_{A}(G,H)$ to be nonemty by the means of a counting argument.

\begin{propositionn}\label{prop counting theorem} 
	Let $A$ be a group acting on a $p$-group $P$ via automorphisms of order $r^k$ where  $r$ is a prime dividing $p-1$ and let $H$ be an $A$-invariant subgroup of $P$. Set $|P|=p^n$ and $|H|=p^c$. If $n\not\equiv c \ mod \ r$ then $ \mathcal C_{A}(G,H)$ is nonepmty. Moreover, if $C_G(A)\leq H$, then $A$ acts fixed point freely on each member of $\mathcal C_{A}(G,H)$.
\end{propositionn}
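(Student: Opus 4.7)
The strategy is to exploit that $r \mid p-1$ forces $p \equiv 1 \pmod{r}$, which collapses counts of cyclic subgroups of $p$-groups modulo $r$. The cornerstone I would first establish is a counting identity: for any $p$-group $Q$ with $|Q|=p^m$, the number of nontrivial cyclic subgroups of $Q$ is congruent to $m$ modulo $r$. Indeed, enumerating the elements of $Q$ by the cyclic subgroup they generate and writing $c_i$ for the number of cyclic subgroups of order $p^i$, one has
\[
p^m = 1 + \sum_{i \geq 1}(p^i - p^{i-1})\, c_i,
\]
so that $\sum_{i\geq 1}p^{i-1}c_i = (p^m-1)/(p-1) = 1+p+\cdots+p^{m-1}$. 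Reducing modulo $r$ using $p\equiv 1 \pmod{r}$ gives $\sum_{i\geq 1}c_i \equiv m \pmod{r}$.

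Applying this identity to both $P$ and $H$ and subtracting, the number of cyclic subgroups of $P$ that are not contained in $H$ is $\equiv n-c \pmod{r}$. Now $A$ acts on the set $X$ of such subgroups because $H$ is $A$-invariant, and since $|A|=r^k$ every non-fixed $A$-orbit on $X$ has length divisible by $r$. Orbit counting therefore yields
\[
|\mathcal C_A(P,H)| = |X^A| \equiv |X| \equiv n - c \pmod{r},
\]
which is nonzero by the hypothesis $n\not\equiv c\pmod{r}$. This proves the first assertion.

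For the ``moreover'' part, fix $C \in \mathcal C_A(P,H)$. Since $r \mid p-1$ we have $r \ne p$, so the action of $A$ on the cyclic $p$-group $C$ is coprime. If $C_C(A) \ne 1$, then Lemma~\ref{coprime action 2}(a) would force $[C,A]=1$, giving $C \leq C_P(A) \leq H$ and contradicting $C \notin H$. Hence $A$ acts fixed point freely on $C$. The main technical point is the counting identity above; once it is in hand, both parts of the proposition follow cleanly from orbit counting and a single invocation of standard coprime action on cyclic groups.
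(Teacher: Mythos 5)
Your proposal is correct and follows essentially the same route as the paper: the counting identity you prove inline is exactly Lemma~\ref{counting}, and the rest (applying it to $P$ and $H$, orbit counting modulo $r$ under the $A$-action, and a coprime-action argument on a cyclic group for the fixed-point-free claim) matches the paper's proof of Proposition~\ref{prop counting theorem}.
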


\begin{lemman}\label{counting}
	The number of the nontrivial  cyclic subgroups of a $p$-group of order $p^n$ is congruent to $n$  modulo $p-1$.
\end{lemman}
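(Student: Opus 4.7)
The plan is to count by element-order and use Euler's totient. Let $P$ have order $p^n$, let $N_k$ denote the number of cyclic subgroups of $P$ of order $p^k$ for $k \geq 1$, and let $M_k$ denote the number of elements of $P$ of order exactly $p^k$. Since every cyclic group of order $p^k$ contains exactly $\varphi(p^k) = p^{k-1}(p-1)$ generators, and every element of order $p^k$ generates a unique cyclic subgroup of order $p^k$, we have $M_k = N_k \cdot p^{k-1}(p-1)$. The total number of nontrivial cyclic subgroups is $\sum_{k=1}^n N_k$, which is what the lemma claims to be congruent to $n \bmod (p-1)$.

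Next, I would partition $P \setminus \{1\}$ by element order to get
$$p^n - 1 \;=\; \sum_{k=1}^{n} M_k \;=\; (p-1)\sum_{k=1}^{n} N_k\, p^{k-1}.$$
Dividing by $p-1$, which is legitimate since $p^n - 1 = (p-1)(1 + p + \cdots + p^{n-1})$, yields
$$\sum_{k=1}^{n} N_k\, p^{k-1} \;=\; 1 + p + p^2 + \cdots + p^{n-1}.$$

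Finally I reduce both sides modulo $p-1$. Since $p \equiv 1 \pmod{p-1}$, every power $p^{k-1} \equiv 1 \pmod{p-1}$, and the identity collapses to
$$\sum_{k=1}^{n} N_k \;\equiv\; n \pmod{p-1},$$
which is the statement of the lemma. There is no real obstacle here; the only point requiring a moment's thought is to avoid the naive strategy of summing $N_k$ directly (which gives $0 \equiv 0$) and instead to sum the weighted quantity $N_k \, p^{k-1}$ before reducing modulo $p-1$.
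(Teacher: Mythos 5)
Your proof is correct and follows essentially the same route as the paper: both count the elements of $P$ partitioned by their order, use that each cyclic subgroup of order $p^k$ has $\varphi(p^k)=p^{k-1}(p-1)$ generators to get $p^n-1=(p-1)\sum_k N_k p^{k-1}$, divide by $p-1$, and reduce modulo $p-1$ using $p\equiv 1$. No gaps; the argument matches the paper's proof step for step.
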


\begin{proof}[\textbf{Proof}]

	Let $a_k$ be the number of the cyclic subgroups of $P$ of order $p^k$ for $k\geq 0$.	Define an equivalence relation on $P$ by $x \sim y$ if and only if $|x|=|y|$. Then the size of the class of $x$ is equal to $\varphi(p^k) a_{k}$ where $|x|=p^k$ and $\varphi$ is the Euler phi function. Then we get that
	
	$$
	|P|=p^n=\sum\limits_{k=0}^{d} a_k\varphi(p^k)=1+\sum\limits_{k=1}^{d}a_kp^{k-1}(p-1)$$
	
	where $P$ is of exponent $p^d$. It follows that
	$$\dfrac{p^n-1}{p-1}=1+p+p^2+ \ldots +p^{n-1}=\sum\limits_{k=1}^{d}a_kp^{k-1}.$$
	
	Now the left hand side is congurent to $n$ modulo $p-1$ and the right hand side is congruent to $\sum\limits_{k=1}^{d}a_k$ modulo $p-1$. Clearly,  $\sum\limits_{k=1}^{d}a_k$  is equal to the number of the nontrivial  cyclic subgroups of $P$ and the result follows.
\end{proof}

\begin{proof}[\textbf{Proof of Proposition \ref{prop counting theorem}}]
	Let $\mathcal C$ be the nontrivial cyclic subgroups of $P$ that are not contained in $H$. Then  $|\mathcal C|$ of  is congurent to $n-c$  modulo $p-1$ by Lemma \ref{counting}. Since $r$ is a divisor of $p-1$, we get that $|\mathcal C|\equiv n-c \ mod \ r. $ By hypothesis, $ n-c \not\equiv 0 \ mod \ r$, and so $ |\mathcal C| \not\equiv 0 \ mod \ r$.
	
	Now consider the induced action of $A$ on $\mathcal C$. Then we have $|\mathcal C|\equiv |\mathcal C_{A}(G,H)| \ mod \ r$ , and so $\mathcal C_{A}(G,H)\neq \emptyset.$ Now assume $C_G(A)\leq H$ and pick $U\in \mathcal C_{A}(G,H)$. Clearly, $A$ acts nontrivially on $U$ as $U$ is not contained in $H$. Since the action of $A$ on $U$ is coprime, we have $U=[U,A]\times C_U(A)$, and so $C_U(A)=1$ as $U$ is cyclic.
\end{proof}

The following is well known.
\begin{lemman}\label{maximal subgroups}
	Let $p$ be an odd prime and $P$ be a nonabelian metacyclic $p$-group of order $p^3$. Then it has exactly $p$ cyclic maximal subgroups and a unique maximal subgroup isomorphic to $C_p\times C_p$.
\end{lemman}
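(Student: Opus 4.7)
The plan is to use the standard classification of nonabelian groups of order $p^3$ together with an elementary count of the maximal subgroups via the Frattini quotient.

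First, I would observe that for $p$ odd there are exactly two nonabelian groups of order $p^3$: one of exponent $p$ (the Heisenberg group, which has no cyclic subgroup of order $p^2$ and is therefore \emph{not} metacyclic) and one of exponent $p^2$, namely $M_p = \langle a,b \mid a^{p^2}=b^p=1,\; b^{-1}ab=a^{1+p}\rangle$. Hence $P\cong M_p$. From the presentation one computes directly $P'=\Phi(P)=Z(P)=\langle a^p\rangle \cong C_p$. Therefore $|P/\Phi(P)|=p^2$, so by the Burnside basis theorem $P$ has exactly $p+1$ maximal subgroups, each of order $p^2$.

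Next I would identify the unique maximal subgroup isomorphic to $C_p\times C_p$. Since $p$ is odd and $P$ has nilpotency class $2$ with $P'$ of exponent $p$, $P$ is a regular $p$-group, and hence $\Omega_1(P)=\{x\in P\mid x^p=1\}$ is a subgroup whose order is $p$ times the number of elements of order $\leq p$ in $P/\langle a^p\rangle$ — concretely one checks $\Omega_1(P)=\langle b, a^p\rangle$, which is abelian (since $[b,a^p]=1$) of order $p^2$ and exponent $p$, hence $\cong C_p\times C_p$. Any maximal subgroup of $P$ that is elementary abelian must lie in $\Omega_1(P)$; being of the same order $p^2$, it must equal $\Omega_1(P)$. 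This gives the uniqueness of the $C_p\times C_p$ maximal subgroup.

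For the remaining $p$ maximal subgroups, I would argue each contains an element of order $p^2$. The key computation uses that $P$ is regular for odd $p$: for any $x,y\in P$ one has $(xy)^p=x^p y^p [y,x]^{\binom{p}{2}}=x^p y^p$ since $[y,x]$ has order dividing $p$ and $p\mid\binom{p}{2}$. In particular $(a^i b)^p=a^{ip}b^p=a^{ip}$, which is nontrivial whenever $p\nmid i$. Thus for $i=1,\dots,p-1$ the element $a^i b$ has order $p^2$, and together with $a$ itself this produces $p$ cyclic subgroups of order $p^2$; each of them, necessarily containing $\Phi(P)=\langle a^p\rangle$, is one of the $p+1$ maximal subgroups and is distinct from $\Omega_1(P)$. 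Since these $p$ cyclic subgroups project to distinct lines in $P/\Phi(P)$, they exhaust the remaining maximal subgroups.

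The only mildly delicate step is the regularity calculation $(xy)^p=x^p y^p$, which is where oddness of $p$ is genuinely used; this is where I would be most careful, though it is standard. Everything else is bookkeeping on the presentation.
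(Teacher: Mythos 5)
Your proof is correct and complete. The paper states this lemma as ``well known'' and supplies no proof of its own, so there is nothing to compare against; your argument --- identify $P$ with the modular group of order $p^3$ via the classification of nonabelian groups of order $p^3$ (the exponent-$p$ group being non-metacyclic), count the $p+1$ maximal subgroups through $P/\Phi(P)$, isolate $\Omega_1(P)=\langle b,a^p\rangle\cong C_p\times C_p$ as the unique elementary abelian maximal subgroup, and use $(xy)^p=x^py^p[y,x]^{\binom{p}{2}}$ with $p$ odd to show the remaining $p$ maximal subgroups are cyclic of order $p^2$ --- is exactly the standard verification and fills the gap the paper leaves implicit. Two cosmetic remarks: the appeal to regularity is not needed, since the displayed identity is just the nilpotency-class-$2$ collection formula and all you use is $p\mid\binom{p}{2}$ for odd $p$; and the sentence describing the order of $\Omega_1(P)$ in terms of $P/\langle a^p\rangle$ is garbled, but your explicit computation $(a^ib^j)^p=a^{ip}$, which gives $\Omega_1(P)=\langle b,a^p\rangle$ directly, makes that phrase dispensable.
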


\begin{lemman}\label{lem generalized extra special}
	Let $P$ be a generalized extraspecial $p$-group. Suppose that one of the maximal subgroup of $P$ is elementary abelian.  Then $P= E\times A$ where $E$ is a nonabelian $p$-group of order $p^3$ and $A$ is elementary abelian. 
\end{lemman}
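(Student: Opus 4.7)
The plan is to exploit the fact that in a generalized extraspecial $p$-group, $P' \cong C_p$ is automatically central: since $|\mathrm{Aut}(P')|=p-1$ is coprime to $p$, the $p$-group $P/C_P(P')$ must be trivial, so $P' \le Z(P)$. Let $M$ be the elementary abelian maximal subgroup, and pick $x \in P \setminus M$, so that $P = \langle M,x\rangle$. Since $P'\le Z(P)$, the map $\lambda \colon M \to P'$ defined by $\lambda(m)=[m,x]$ is a homomorphism. Its image is either trivial or all of $P'\cong C_p$; the former would force $x$ to centralise $M$ and hence $P$ to be abelian, contradicting $P'\ne 1$. Thus $\lambda$ is surjective, and $K:=\ker\lambda = C_M(x)$ is a hyperplane in $M$.

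Now I observe that $K$ is central in $P$: it is centralised by $M$ (as $M$ is abelian) and by $x$ (by definition of $K$), so $K\le Z(P)$. In particular, since $P'\le Z(P)\cap M$ and $P'\cong C_p$, one has $P'\le K$. Because $K$ is elementary abelian, I can split $K = P' \oplus A$ as $\mathbb{F}_p$-vector spaces; then $A$ is elementary abelian and $A\le Z(P)$. Choose any $m \in M\setminus K$, and set $E := \langle m,x\rangle$. Since $[m,x]$ generates $P'$, the group $E$ is nonabelian, and $E' = P'$. The images of $m$ and $x$ in $E/P'$ have order dividing $p$ (using $x^p\in \Phi(P)=P'$), and they are $\mathbb{F}_p$-linearly independent in $E/P'$ since otherwise $x \equiv m^k \bmod P'$ would give $x\in C_P(m)\cdot P'\subseteq C_P(m)$, contradicting $[m,x]\ne 1$. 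Hence $|E/P'|=p^2$ and $|E|=p^3$.

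It remains to verify $P = E \times A$. Since $A \le Z(P)$, both $A$ and $E$ are normal in $P$, and $[E,A]=1$. For the intersection, if $y\in E\cap A$, write $y = m^i x^j c$ with $c\in P'$ (using $E'=P'\le Z(E)$); reducing modulo $M$ forces $j=0$, so $y=m^i c$ lies in $K$, which together with $m\notin K$ forces $i=0$, hence $y=c\in A\cap P'=\{1\}$. Finally, since $M = \langle m\rangle \oplus P'\oplus A$ and $x\in E$, every element of $P$ lies in $EA$, so $P=EA$ and consequently $P = E\times A$ with $E$ nonabelian of order $p^3$ and $A$ elementary abelian.

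The proof is essentially a direct linear-algebra decomposition once centrality of $P'$ is in place; the only step requiring care is verifying that the subgroup $E=\langle m,x\rangle$ has order exactly $p^3$ and meets $A$ trivially, which is where the choice of $A$ as a complement of $P'$ inside $K=C_M(x)$ (rather than an arbitrary complement in $M$) is crucial.
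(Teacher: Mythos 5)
Your proof is correct; I checked each step (centrality of $P'$, the commutator map $\lambda\colon M\to P'$ being a homomorphism, $K=C_M(x)\leq Z(P)$, the order count $|E|=p^3$, and the verification $E\cap A=1$, $P=EA$) and found no gaps — the only cosmetic point is that in the expression $y=m^ix^jc$ one should normalize $0\leq i,j<p$, which is harmless since $m^p=1$ and $x^p\in P'$. Your route differs from the paper's in how the order-$p^3$ subgroup $E$ is produced: the paper takes $E$ to be an abstract minimal nonabelian subgroup of $P$, invokes the standard facts that then $P'\leq E$, $\Phi(E)=\Phi(P)\cong C_p$ and $Z(E)=\Phi(E)$ with $|E:Z(E)|=p^2$, and afterwards sets $C=C_M(E)=C_M(y)$ for $y\in E\setminus M$, getting $P=CE$, $E\cap C=Z(E)=P'$ and splitting $C=P'\times A$. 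You instead build $E=\langle m,x\rangle$ by hand from an element $x\notin M$ and $m\in M\setminus C_M(x)$, using the homomorphism $m\mapsto[m,x]$ onto $P'$ and a hyperplane/complement argument inside $K=C_M(x)$. The end decomposition $P=E\times A$, with $A$ a complement of $P'$ inside the centralizer of the "outside" element, is essentially the same in both treatments; what your version buys is self-containedness (no appeal to the structure theory of minimal nonabelian $p$-groups) and an explicit reason why the complement $A$ is central, whereas the paper's version is shorter once those standard facts are granted.
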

\begin{proof}[\textbf{Proof.}]
	Let $P$ be a generalized extraspecial $p$-group and $M$ be a maximal subgroup of $P$ such that $M$ is elementary abelian. Let $E$ be a minimal nonabelian subgroup of $P$.
	
	Notice that $P'\leq E$, as otherwise, $P'\cap E=1$ which leads that $E$ is abelian. Clearly $1<\Phi(E)\leq \Phi(P)$, which forces that $\Phi(E)=\Phi(P)\cong C_p.$ Since all maximal subgroups of $E$ are abelian by the minimality of $E$, we have $Z(E)=\Phi(E)$ and $|E:Z(E)|=p^2.$ Thus, $E$ is a nonabelian $p$-group of order $p^3$.
	
	Clearly, $M\cap E$ is a maximal subgroup of $E$. Then we can pick $x\in M\cap E$ and $y\in E- M\cap E$ such that $\langle x,y \rangle=E.$ Since $|P'|=p$, we get that $|P:C_P(y)|=p$, that is, $C_P(y)$ is a maximal subgroup of $P$, and so $|M:C_M(y)|=p$. We see that $C_M(y)\subseteq C_M(x)$ as $x\in M$, which yields that $C_M(y)=C_M(E)$. Write $C=C_M(E)$. Clearly, $P=CE$ and $E\cap C=Z(E)=P'$ as $|P:C|=p^2.$ We see that $C$ is elementary abelian, and so $C=P'\times A$ for some $A\leq C.$ Then $P=E\times A$ as desired.
\end{proof}

\begin{lemman}\label{lem normals are conjuagate}
	Let $\E$ be a semisaturated fusion system on a $p$-group $P$. If $Q$ and $R$ are $\E$-conjugate normal subgroups of $P$, then there exists $\psi \in Aut_{\E}(P)$ such that $R=Q\psi.$
	
\end{lemman}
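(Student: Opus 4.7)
The plan is to exploit the fact that both $Q$ and $R$ being normal in $P$ forces their $P$-normalizers to be as large as possible, so that both subgroups are automatically fully $\E$-normalized, and then to extend an $\E$-isomorphism $Q \to R$ to all of $P$ via receptivity.

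First I would observe that since $Q,R \trianglelefteq P$, we have $N_P(Q)=N_P(R)=P$, so both $Q$ and $R$ are fully normalized with respect to $\E$ (trivially, since $|N_P(Q')|\leq |P|$ for any $\E$-conjugate $Q'$). In particular, by Lemma \ref{lem fully normalized w.r.t E}(a), the subgroup $R$ is receptive in $\E$.

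Next, applying Lemma \ref{lem fully normalized w.r.t E}(b) to $R$ and its $\E$-conjugate $Q$, I obtain an $\E$-isomorphism $\phi\in Hom_{\E}(Q,R)$ with $N_{\phi}(Q)=N_P(Q)$. Since $Q\trianglelefteq P$, this last equality becomes $N_{\phi}(Q)=P$. Now receptivity of $R$ allows $\phi$ to be extended to a morphism $\overline{\phi}\in Hom_{\E}(P,P)$. Because all morphisms in a fusion system are injective and $P$ is finite, $\overline{\phi}$ is in fact an element of $Aut_{\E}(P)$, and by construction $Q\overline{\phi}=Q\phi=R$. Setting $\psi=\overline{\phi}$ yields the desired automorphism.

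I do not foresee any significant obstacle: the argument is essentially a direct invocation of the receptivity/extension machinery already developed for fully normalized subgroups in semisaturated fusion systems, combined with the trivial remark that normality in $P$ automatically gives the maximal $P$-normalizer and hence fully normalized status.
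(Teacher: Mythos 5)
Your proposal is correct and follows essentially the same route as the paper: normality gives $N_P(Q)=N_P(R)=P$, so $R$ is fully normalized and receptive, one chooses $\phi\in Hom_{\E}(Q,R)$ with $N_{\phi}(Q)=N_P(Q)=P$, and receptivity extends $\phi$ to an element of $Hom_{\E}(P,P)=Aut_{\E}(P)$ carrying $Q$ to $R$. The only cosmetic difference is that you invoke Lemma \ref{lem fully normalized w.r.t E} (the fully $\E$-normalized version), which is a perfectly valid, arguably cleaner, citation for the same step.
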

\begin{proof}[\textbf{Proof.}]
	
	There exists $\psi\in Hom_{\E}(Q,R)$ such that $N_\psi(Q)=N_P(Q)=P$ by Lemma \ref{lem fully normalized w.r.t E}(b). Note that $R$ is fully $\E$-normalized, and so $R$ is receptive by Lemma \ref{lem fully normalized w.r.t E}(a), which yields that $\psi$ extends to $\overline \psi \in Hom_{\E}(N_P(Q),P)=Hom_{\E}(P,P)=Aut_{\E}(P)$.
\end{proof}

\begin{proof}[\textbf{Proof of Theorem \ref{thm gen. extraspacial}}]
	
	Let $\E$ be a semisaturated fusion system on a generalized extraspecial $p$-group $P$ such that $\E\neq N_{\E}(P)$. It then follows that $P'$ is not strongly closed in $P$ by Corollary \ref{cor Aschbacher's lemma}. Hence, there exists $\phi\in Hom_{\E}(P',P)$ such that $R=P'\phi\neq P'.$
	
	Suppose that $R \lhd P$. Then there exists  $\psi \in Aut_{\E}(P)$ such that $R=P'\psi$ by Lemma \ref{lem normals are conjuagate}, which leads a contradiction as $P'\psi=P'.$ It follows that any $Aut_{\E}(P)$-conjugate of $R$ is not normal in $P$.
	
	Since $|P'|=p$ and $R$ is cyclic, we see that $|P:C_P(R)|=p$. Write $M=C_P(R)$. Then $\phi^{-1}$ extends to $\overline {\phi^{-1}} \in Hom_{\E}(M,P)$ by Lemma \ref{lem fully normalized w.r.t E}(a).  Set $N=M\overline {\phi^{-1}}$. Notice that $P'\leq N$ as $N$ is a maximal subgroup of $P$. We see that there exists $\psi \in Aut_{\E}(P)$ such that $N\psi=M$ Lemma \ref{lem normals are conjuagate}, and so $ \overline {\phi^{-1}} \psi\in Aut_{\E}(M)$. Moreover, $R \overline {\phi^{-1}} \psi=P' \psi =P'.$ Since $R\neq P'$, we get that $P'$ is not a characteristic subgroup in $M$. We have that $1\leq \Phi(M)\leq \Phi(P)=P' \cong C_p.$ Then we obtain that $\Phi(M)=1$, and so $M$ is elementary abelian. It follows that $P=E\times A$ where $E$ is a nonabelian $p$-group of order $p^3$ and $A$ is elementary abelian by Lemma \ref{lem generalized extra special}.
	
	Let $p=2$. Then $M\cap E \cong C_2 \times C_2$, and so $E$ is not quaternion, which forces that $E$ is a dihedral group of order $8$ as desired.
	
	It is left to show that $E$ is of exponent $p$ when $p>2.$ Suppose that $E$ is of exponent $p^2$. We see that there exists a fully normalized essential subgroup $Q$ of $P$ by Corollary \ref{cor P normal E iff no essential}. Moreover, $Q=N_i\times A$ for some $i=1,2, \ldots, p+1$ where $N_i$ is a maximal subgroup of $E$ as $C_P(Q)\leq Q$ by Proposition \ref{prop charac. of essentails}(a). Without loss of generality, we set $N_1=C_p\times C_p$ and $N_i=C_{p^2}$ for $i>1$ by Lemma \ref{maximal subgroups}. Notice that $N_1\times A=M$. If $M$ is not essential, then all essentials are of the form $Q=N_i\times A$ for some $i>1$, and hence $\Phi(Q)=P'$. It follows that $P'$ is invariant under $Aut_{\E}(Q)$ and $Aut_{\E}(P)$, which yields that $P'$ is strongly $\E$-closed by Theorem \ref{thm Alperin 2}. This contradiction yields that $M$ is an essential subgroup of $P$.
		
	 Now let's consider $D=Aut_P(M)$. Clearly, $D$ is of order $p$ and centralizes a hyperspace of $M$, that is, $|M:C_M(D)|=p$. Notice that $Aut_{\E}(M)=Out_{\E}(M)$ as $M$ is abelian. We see that $D=Aut_{P}(M)$ is strongly closed in $G=Aut_{\E}(M)$ and $O_p(G)\cap D=1$ by Proposition \ref{prop charac. of essentails}(b). Pick $x\in G$ such that $D\neq D^x$ and set $H=\langle D,D^x \rangle$. We now show that $H$ is not $p$-closed. Since otherwise, $H$ has a unique Sylow $p$-subgroup $S$ containing both $D$ and $D^x$, and so $D=D^x$ by using the fact that $D$ is also strongly closed in $S$ with respect to $H$, which is not the case. Hence, $H$ is not $p$-closed, and in particular $H$ is not a $p$-group.
	 
	  We see that $D^x$ also centralizes a hyperspace of $M$, and so  $p\leq |M:C_M(H)|\leq p^2.$ First suppose that $|M:C_M(H)|=p$. Then both $D$ and $D^x$ acts trivially on $M/C_M(H)$, which yields that $H$ acts trivially on $M/C_M(H)$, and so $[M,H,H]=1$. Note that the action of $H$ on $M$ is faitfull. It follows that $H$ is $p$-group, which is a contradiction. Hence, we get that $|M:C_M(H)|= p^2$ as desired.
	  
	 Now consider the action of $H$ on  $M/C_M(H)\cong C_p\times C_p$. There is an induced homomorphism $f:H\to GL(2,p)$ by the action of $H$ on $M/C_M(H)$. Set $K=Ker(f)$. Let $\phi$ be a $p'$-element of $K$. Then we have $[M,\phi,\phi]=1$, and so $[M,\phi]=1$ by coprime action. Since the action of $K$ on $M$ is faithful, we obtain that $\phi=1$, that is, $K$ is a $p$-group. Since $H$ is not $p$-closed, $f(H)\cong H/K$ is not $p$-closed as well, and so $SL(2,p)\leq f(H)$.
	
	Note that $D\cap K=1$ as $D$ acts nontrivially on $M/C_M(H)$ and $D$ is of order $p$. Let $S$ be a Sylow $p$-subgroup of $H$ such that $D\leq S$. Then $K\lhd S$ as $K$ is a normal $p$-subgroup in $H$. On the other hand, $D\lhd S$ as $D$ is strongly closed in $S$ with respect to $H$. It follows that $[D,K]=1$. Next we obtain that $[D^x,K]=1$ in a similar way, and so $[K,H]=1$ as $H=\langle D,D^x \rangle.$ Consequently, we have $K\leq Z(H).$ 
	
	Let $i$ be the unique involution lying in $SL(2,p)$, and consider $U=f^{-1}(\langle i \rangle)$. Note that $|U:K|=2$ and $K$ is a $p$-group where $p>2$ and $U\lhd H$ as $\langle i \rangle \lhd GL(2,p)$. Let $\langle \varphi \rangle$ be a Sylow $2$-subgroup of $U$. Since $K\leq Z(H)$, we get $\langle \varphi \rangle \lhd U$, and so  $\langle \varphi \rangle$ is characteristic in $U$. It follows that $\langle \varphi \rangle\lhd H$ as $U\lhd H$. Consequently, we get $\varphi\in Z(H) $ as $|\langle \varphi \rangle|=2.$ 
	
	Clearly $\varphi$ centralizes $D=Aut_P(M)$, and so $N_{\varphi}(M)=P$,  and $\varphi$ extends to  some $\alpha \in Aut_{\E}(P)$. Since $p$ is odd, we can also choose $\alpha$ of order $2^k$ for some $k\geq 1$. We observe that $M=\Omega(P)$, and so $\alpha$ is of order $2$ as well. Note also that $$C_M(\alpha)=C_M(\varphi)=C_M(H)< C_M(D)=C_M(P).$$
	
	The second equality comes from the fact that $f(\varphi)=i$ is the unique involution in $SL(2,p)$, which acts fixed point freely on $M/C_M(H)$.
	
	Let $|P|=p^n$. Then $M$ is of order $p^{n-1}$, and clearly $n-(n-1)=1\not\equiv 0 \ mod \ 2$. It follows that there exists an $\alpha$-invariant cyclic subgroup $R$ of $P$ such that $R\nsubseteq M$ by Proposition \ref{prop counting theorem}. Next we have $R\cong C_{p^2}$ as $M=\Omega(P)$. It is easy to observe that $P=MR$ and $M\cap R=P'$ by using the fact that $P=E\times A$. Since $[D,\varphi]=1$ which is the trivial isomorphism of $M$, we get that $[P,\alpha]$ acts trivially on $M$, that is, $[P,\alpha]\leq C_P(M)=M$. Thus, $\alpha$ acts trivially on $P/M$, and so $\alpha$ acts trivially on $R/R\cap M$. Since $R$ is cyclic and $\alpha$ acts coprimely on $R$, we get that $[R,\alpha]=1$, and so $[P',\alpha]=1$. Hence, $P'\leq C_M(\alpha)$.

	Notice that $[M,\alpha]=[P,\alpha]$ as  $[P,\alpha,\alpha]=[P,\alpha] \leq M$. Hence, we get $[M,\alpha] \lhd P$. We also note that $M=[M,\alpha]\times C_M(\alpha)$ by coprime action.
	
	$$[M,\alpha, P]\leq [M,\alpha]\cap P'\leq [M,\alpha]\cap C_M(\alpha)=1.$$
	
	Thus, $P$ centralizes $[M,\alpha]$. $P$ also centralizes $C_M(\alpha)$ as $C_M(\alpha)<C_M(P)$. It then follows that $P$ centralizes  $M$ as $M=[M,\alpha]\times C_M(\alpha)$, which is impossible as $M=C_P(M)$. This final contradiction shows that $E$ is of exponent $p$ when $p>2$ as desired.

\end{proof}

\subsection{Metacyclic $p$-groups} In that subsection, we shall observe that a metacyclic $p$-group is resistant in semisaturated fusion system unless it is dihedral, semidihedral or generalized quaternion.

The following can be obtained by the proof of \cite[Proposition 2.1]{Crv2}. To be precise, we shall give a proof. 
\begin{lemman}\label{lem A act transitly on maximals} Let $P$ be a $2$-generated group. If $A\leq Aut(P)$ is not $p$-closed, then $A$ acts transitively on the set of all maximal subgroups of $P$. In particular, all maximal subgroups of $P$ are isomorphic.
\end{lemman}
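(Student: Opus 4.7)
The plan is to reduce the action of $A$ on the set of maximal subgroups of $P$ to a linear action on the Frattini quotient $\overline P := P/\Phi(P)$. Since $P$ is $2$-generated, $\overline P \cong C_p \times C_p$, and the map $M \mapsto M/\Phi(P)$ is an $A$-equivariant bijection between the maximal subgroups of $P$ and the set $\Omega$ of $p+1$ hyperplanes of $\overline P$; hence it suffices to show that $A$ acts transitively on $\Omega$. The induced action of $A$ on $\overline P$ gives a homomorphism $A \to GL(\overline P) \cong GL(2,p)$ whose kernel $K = C_A(\overline P)$ lies in $O_p(A)$ by Lemma \ref{coprime action 3}(a); since a group is $p$-closed if and only if its quotient by a normal $p$-subgroup is, $G := A/K$ is a non-$p$-closed subgroup of $GL(2,p)$.

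Next I would analyse the orbits of $G$ on $\Omega$. Since $G$ is not $p$-closed, it contains an element of order $p$, and every such element of $GL(2,p)$ is a transvection (its minimal polynomial divides $(x-1)^p$ but has degree at most two). A transvection fixes exactly one line of $\overline P$ and permutes the remaining $p$ lines as a single $p$-cycle, so any $G$-orbit on $\Omega$ meeting those $p$ lines must contain all of them. Therefore $G$ either is transitive on $\Omega$ or has exactly two orbits, of sizes $1$ and $p$. In the latter case $G$ fixes the unique line $v$ fixed by the transvection and hence embeds into the $GL(2,p)$-stabilizer of $v$, which (after choosing a basis) is the affine group $AGL(1,p)$ of order $p(p-1)$; but $AGL(1,p)$ has a unique, normal Sylow $p$-subgroup, namely the transvections fixing $v$, so $G$ would be $p$-closed, contradicting the hypothesis. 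Hence $A$ acts transitively on $\Omega$, and the ``in particular'' clause follows at once since automorphisms preserve isomorphism type of subgroups.

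The main obstacle is the orbit dichotomy, which hinges on two specific features of $GL(2,p)$: any non-$p$-closed subgroup necessarily contains an element of order $p$ (immediate, since a group with no $p$-element is trivially $p$-closed), and every such element is a transvection with the rigid $(1,p)$-cycle action on $\Omega$ described above. Both features break down in $GL(n,p)$ for $n \geq 3$, which is precisely why the $2$-generated hypothesis on $P$ cannot be dropped.
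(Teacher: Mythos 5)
Your proof is correct, and its first half (passing to $P/\Phi(P)\cong C_p\times C_p$, noting that the kernel of $A\to GL(2,p)$ is a normal $p$-subgroup, so the image $G$ is still not $p$-closed) is exactly the paper's reduction. Where you genuinely diverge is the key step: the paper simply quotes the classical fact that a non-$p$-closed subgroup of $GL(2,p)$ must contain $SL(2,p)$ and then uses transitivity of $SL(2,p)$ on the $p+1$ lines, whereas you avoid that classification fact by an elementary orbit argument --- a non-$p$-closed $G$ contains an element of order $p$ (Cauchy), any such element of $GL(2,p)$ is a transvection with cycle type $(1,p)$ on the lines, so $G$ is either transitive or stabilizes the transvection's fixed line, and the latter is impossible because the line stabilizer is $p$-closed. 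Your route buys self-containedness (only Cauchy plus the structure of the line stabilizer), while the paper's route is shorter and delivers the stronger conclusion $SL(2,p)\leq \overline A$, which it in any case re-derives and exploits in several later arguments. One small correction to your write-up: the $GL(2,p)$-stabilizer of a line is the Borel subgroup, of order $p(p-1)^2$, not $AGL(1,p)$ of order $p(p-1)$ (that is its image in $PGL(2,p)$); the property you actually use --- that this stabilizer has a unique, normal Sylow $p$-subgroup consisting of the transvections with that axis --- is nevertheless true, so the argument stands as written once the identification is fixed.
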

\begin{proof}[\textbf{Proof.}]
Consider the action of $A$ on $P/\Phi(P)\cong C_p \times C_p$. Then we have a homomorphism from $A$ to $GL(2,p).$ Since the kernel of this action is a $p$-group, the image of $A$, $\overline A \leq GL(2,p)$ is not $p$-closed, and hence $SL(2,p)\leq  \overline A$. The action of $SL(2,p)$ is transitive on the maximal subgroups of $C_p\times C_p$. Thus, the action of $A$ on  the set of all maximal subgroups of $P$ is transitive as well.
\end{proof}

We need the general version of \cite[Proposition 3.2]{Crv2}(a).

\begin{lemman}\label{lem aut of metacyclic of odd order}
	Let $p$ be an odd prime and $P$ be a metacyclic $p$-group. Then $Aut(P)$ is $p$-closed unless $P\cong C_{p^n}\times C_{p^n}$.
\end{lemman}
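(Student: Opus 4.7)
The plan is to reduce, via Lemma~\ref{lem A act transitly on maximals}, to an explicit comparison of the $p+1$ maximal subgroups of $P$. If $P$ is cyclic, then $\mathrm{Aut}(P)\cong(\mathbb{Z}/p^m\mathbb{Z})^{\times}$ is cyclic (for $p$ odd), hence $p$-closed, and $P\not\cong C_{p^n}\times C_{p^n}$, consistent with the statement. So I may assume $P$ is non-cyclic; since $P$ is metacyclic, $|P/\Phi(P)|=p^2$, so $P$ is $2$-generated. Assuming $\mathrm{Aut}(P)$ is not $p$-closed, Lemma~\ref{lem A act transitly on maximals} gives that $\mathrm{Aut}(P)$ acts transitively on the set of maximal subgroups of $P$, and in particular they are pairwise isomorphic.

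Next I would descend to the abelianization. Since $P'$ is characteristic and is contained in every maximal subgroup (as $P'\le\Phi(P)$), the $\mathrm{Aut}(P)$-action descends to a transitive action on the maximals of $P/P'$. The quotient $P/P'$ is an abelian metacyclic group of rank $2$, so $P/P'\cong C_{p^\alpha}\times C_{p^\beta}$ with $\alpha\ge\beta\ge 1$; its maximal subgroups are of type $C_{p^\alpha}\times C_{p^{\beta-1}}$ (with multiplicity $p$) and $C_{p^{\alpha-1}}\times C_{p^\beta}$ (with multiplicity $1$), which coincide only when $\alpha=\beta$. Hence $P/P'\cong C_{p^\alpha}\times C_{p^\alpha}$ is homocyclic. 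If $P$ is abelian we are done, so suppose $P$ is nonabelian and aim for a contradiction.

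For $p$ odd, every metacyclic $p$-group admits a split presentation (a classical result to be cited), so I write $P=\langle a\rangle\rtimes\langle b\rangle$ with $|a|=p^m$, $|b|=p^n$, $bab^{-1}=a^{1+p^l}$, $1\le l<m$, and $n\ge m-l$. A direct computation gives $P'=\langle a^{p^l}\rangle$ and $P/P'\cong C_{p^l}\times C_{p^n}$, so homocyclicity of $P/P'$ forces $l=n$; write $m=n+k$ with $1\le k\le n$. I then compare the two maximal subgroups $M_0=\langle a,b^p\rangle$ and $M_\infty=\langle b,a^p\rangle$ by their exponents. Since $a\in M_0$ has order $p^{n+k}$, $\exp(M_0)\ge p^{n+k}$. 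On the other hand $M_\infty=\langle a^p\rangle\rtimes\langle b\rangle$ with $|a^p|=p^{n+k-1}$ and $|b|=p^n$, and the iterated conjugation identity $(c^{i}b^{j})^{s}=c^{\,iT_j(s)}\,b^{js}$ with $T_j(s)=\sum_{r=0}^{s-1}(1+p^n)^{jr}$ (where $c=a^p$), together with the expansion $(1+p^n)^{p^t}\equiv 1+p^{n+t}\pmod{p^{n+t+1}}$ for $p$ odd, gives by a $p$-adic valuation count that every element of $M_\infty$ has order dividing $p^{n+k-1}$. Thus $\exp(M_\infty)\le p^{n+k-1}<p^{n+k}\le\exp(M_0)$, so $M_0\not\cong M_\infty$, contradicting the conclusion of the first paragraph.

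The main obstacle is the exponent bound for $M_\infty$: one must track $p$-adic valuations in $T_j(s)$ carefully enough to rule out any ``mixed'' element $a^{pi}b^j$ having order larger than $p^{n+k-1}$, despite $M_\infty$ potentially having nontrivial class. A secondary prerequisite is the splitting theorem for metacyclic $p$-groups at odd primes, which I would cite rather than reprove; should one wish to avoid this citation, one could instead work with the general (possibly non-split) metacyclic presentation and note that the homocyclicity of $P/P'$ forces the presentation to be equivalent to a split one.
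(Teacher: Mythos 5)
Your reduction to the case where $\mathrm{Aut}(P)$ acts transitively on maximal subgroups, the descent to $P/P'$ forcing the abelianization to be homocyclic, and the final exponent comparison of $M_0=\langle a,b^p\rangle$ against $M_\infty=\langle a^p,b\rangle$ are all fine (the valuation count you worry about does go through, since for $p$ odd $v_p\bigl((1+p^n)^{jp^e}-1\bigr)=n+v_p(j)+e$). The genuine gap is the step you describe as ``a classical result to be cited'': the assertion that every metacyclic $p$-group with $p$ odd admits a split presentation is false, so there is nothing to cite. For any odd $p$ take $P=\langle a,b\mid a^{p^3}=1,\ b^{p^3}=a^{p^2},\ a^b=a^{1+p}\rangle$, of order $p^6$. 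Here $P'=\langle a^p\rangle\cong C_{p^2}$ and $P/P'\cong C_p\times C_{p^3}$, so for any cyclic normal $N$ with $P/N$ cyclic one has $P'\leq N\leq C_P(a^p)=\langle a,b^p\rangle$, a maximal subgroup of exponent $p^3$; hence $|N|\leq p^3$, while a complement $K\cong P/N$ would be cyclic of order at most $p^3$, forcing $|N|=|K|=p^3$. But $K\not\leq\langle a,b^p\rangle$ (since $NK=P$), and every element $a^ib^j$ with $p\nmid j$ satisfies $(a^ib^j)^{p^3}=a^{jp^2}\neq 1$, i.e.\ has order $p^4$, contradicting $|K|=p^3$. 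So non-split metacyclic $p$-groups exist for odd $p$, and your proof as written rests on a false statement.

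The fallback you mention, namely that homocyclicity of $P/P'$ forces the group to be split, is in fact true for odd $p$, but it is exactly the missing lemma and needs an argument rather than a remark: starting from a general presentation $\langle a,b\mid a^{p^m}=1,\ b^{p^n}=a^{p^{m-u}},\ a^b=a^{\lambda}\rangle$ with $l=v_p(\lambda-1)$, one first checks that $P/P'$ homocyclic forces $l=n\leq m-u$, and then one must produce a complement to $\langle a\rangle$, e.g.\ by solving for $i$ in $(a^ib^j)^{p^n}=a^{i\Sigma+jp^{m-u}}$ with $v_p(\Sigma)=n$ and $p\nmid j$; alternatively one could cite the actual classification of metacyclic $p$-groups (Newman--Xu, Hempel) together with its splitness criterion. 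Until that is supplied, the nonabelian case is incomplete. Note that the paper's own proof avoids presentations entirely: it takes a minimal counterexample, passes to $P/U$ for the unique subgroup $U$ of order $p$ in $P'$ to reduce to $|P'|=p$ (so $P$ has class two), deduces $\Omega(P)\cong C_p\times C_p$, and then gets a contradiction from Lemma~\ref{lem A act transitly on maximals} applied to $\Omega(P)$, since $P'$ is an $\mathrm{Aut}(P)$-invariant maximal subgroup of $\Omega(P)$. You may want to either adopt that presentation-free reduction or prove the splitting lemma for the homocyclic-abelianization case before running your exponent comparison.
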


\begin{proof}[\textbf{Proof.}]

		Let $P$ be a minimal counter example to the theorem and set $A=Aut(P)$. Clearly, $P$ is not cyclic and  $P$ is a $2$-generated group. Moreover, all maximal subgroups of $P$ are isomorphic groups by Lemma \ref{lem A act transitly on maximals}. It follows that if $P$ is abelian, then $P\cong C_{p^n}\times C_{p^n}$. So we may assume $P$ is nonabelian.
		
		Note that $P'$ is cyclic, and so it has a unique subgroup $U$ of order $p$. Thus, $U$ is a characteristic subgroup of $P$ and $U\leq P'\leq \Phi(P)$. Assume $U<P'$. Then $P/U$ is nonabelian and we obtain that $Aut(P/U)$ is $p$-closed by the minimality of $P$, and so the image of $A$ in $Aut(P/U)$ is $p$-closed. On the other hand, $C_{A}(P/U)$ is a $p$-group, and so $A$ is $p$-closed. This contradiction shows that $P'=U$, that is, $|P'|=p$. In particular, $P'\leq Z(P)$ and $P$ is of class $2$. Since $P$ is of class $2$ and $p>2$, we obtain that the exponent of $\Omega(P)$ is $p$ by \cite[Theorem 4.8]{Isc}.  Note that if $|\Omega(P)|=p$, then $P$ is cyclic by \cite[Theorem 6.11]{Isc}, which is a contradiction. Thus, $|\Omega(P)|\geq p^2$. Clearly $\Omega(P)$ is also metacyclic. Then there exists a cyclic normal subgroup $N$ of $\Omega(P)$ such that $\Omega(P)/N$ is cyclic. It follows that both $N$ and $\Omega(P)/N$ has order $p$ since $\Omega(P)$ is of exponent $p$. Thus, $\Omega(P)\cong C_p\times C_p$.
		
		 Let $\overline A$ be the image of $A$ in $Aut(\Omega(P))$.  Notice that $C_{A}(\Omega(P))$ is a $p$-group, and so $\overline A$ is not $p$-closed. Thus, $A$ acts transitively on the set of maximal subgroups of $\Omega(P)$ by Lemma \ref{lem A act transitly on maximals}. This is not possible as $P'$ is a maximal subgroup of $\Omega(P)$. This contradiction completes the proof.

\end{proof}

\begin{theoremn}\label{thm metacyclic of odd order}
	Let $p$ be an odd prime, $P$ be a metacyclic $p$-group and $\E$ be a semisaturated fusion system on $P$. Then $\E=N_{\E}(P)$.
\end{theoremn}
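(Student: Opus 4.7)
The plan is to show that $P$ admits no fully normalized $\E$-essential subgroup, whence $\E = N_\E(P)$ follows from Lemma \ref{lem P normal E iff no essential}. Suppose for contradiction that $Q$ is such an essential subgroup. By Lemma \ref{key lemma}, $Aut_\E(Q)$ is not $p$-closed; since $Q\le P$ is metacyclic, Lemma \ref{lem aut of metacyclic of odd order} forces $Q \cong C_{p^n}\times C_{p^n}$ for some $n \ge 1$. By Proposition \ref{prop charac. of essentails}(a), $Q$ is $\E$-centric, and since $Q$ is abelian this gives $C_P(Q) = Q$; in particular $P$ must be nonabelian, and normalizer growth in the $p$-group $P$ forces $N_P(Q) > Q$.

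I would next extract an involution $\alpha \in Aut_\E(Q)$ acting as $q \mapsto q^{-1}$. The reduction $\rho\colon Aut(Q) = GL_2(\mathbb{Z}/p^n) \to GL_2(\mathbb{F}_p)$ has a $p$-group kernel, so $\rho(Aut_\E(Q))$ is non-$p$-closed in $GL_2(\mathbb{F}_p)$. Since any two distinct Sylow $p$-subgroups of $GL_2(\mathbb{F}_p)$ generate $SL_2(\mathbb{F}_p)$, the image contains $SL_2(\mathbb{F}_p)$ and in particular the central involution $-I$. A Schur--Zassenhaus lift in the preimage of $\langle -I \rangle$, together with the fact that for $p$ odd the only involution of $GL_2(\mathbb{Z}/p^n)$ reducing to $-I$ modulo $p$ is $-I$ itself (both eigenvalues must be $\equiv -1 \pmod p$, hence equal to $-1$), supplies the required $\alpha$.

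Since $\alpha$ is central in $Aut(Q)$, one has $N_\alpha(Q) = N_P(Q)$, so by receptivity (Lemma \ref{lem fully normalized w.r.t E}(a)) $\alpha$ extends to $\bar\alpha \in Hom_\E(N_P(Q), P)$. For each $x \in N_P(Q)$, Lemma \ref{iden2} gives $c_{\bar\alpha(x)} = c_x$ on $Q$, so $\bar\alpha(x)x^{-1} \in C_P(Q) = Q$, and hence $\bar\alpha$ restricts to an automorphism of $N_P(Q)$; a short calculation yields $\bar\alpha^2 = 1$. Coprime action of $\langle \bar\alpha \rangle$ on $N_P(Q)$ (Lemma \ref{coprime action}(a)), together with $[N_P(Q),\bar\alpha] \le Q$, $[Q,\bar\alpha] = Q$ and $C_Q(\bar\alpha) = 1$ (since $q = q^{-1}$ forces $q = 1$ in an odd-order group), then shows that $C := C_{N_P(Q)}(\bar\alpha)$ is a complement to $Q$ in $N_P(Q)$ with $|C| = |N_P(Q)/Q| > 1$.

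The main obstacle is closing the contradiction from this nontrivial $p$-group complement $C$, and the key structural input is that in any non-cyclic metacyclic $p$-group with $p$ odd, $\Omega(\cdot)$ is elementary abelian of order exactly $p^2$. Indeed, the cyclic normal/cyclic quotient series forces $|\Omega(P)| \le p^2$; a non-cyclic $p$-group for $p$ odd contains at least two distinct subgroups of order $p$, so $|\Omega(P)| \ge p^2$; and a group of order $p^2$ generated by order-$p$ elements is elementary abelian. Thus $\Omega(P) = \Omega(Q) \cong C_p \times C_p$ (since $\Omega(Q) \le \Omega(P)$ with both of order $p^2$), and hence $\Omega(N_P(Q)) \le \Omega(P) \le Q$. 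Then $C \ne 1$ forces $\Omega(C) \ne 1$, yet $\Omega(C) \le \Omega(N_P(Q)) \le Q$ contradicts $C \cap Q = 1$. Therefore no fully normalized $\E$-essential subgroup of $P$ exists, and $\E = N_\E(P)$.
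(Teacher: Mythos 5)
Your proposal is correct in substance, and it takes a genuinely different route from the paper. The paper argues by a minimal counterexample in both $P$ and $\E$: after reducing to $Q\cong C_{p^n}\times C_{p^n}$ exactly as you do (via Lemma \ref{key lemma} and Lemma \ref{lem aut of metacyclic of odd order}), it shows $Q\lhd P$ and $\E=N_{\E}(Q)$, passes to the quotient system modulo $\Phi(Q)$ and uses Proposition \ref{prop charac. of essentails}(b) to force $\Phi(Q)=1$, pins down $|P|=p^3$, and closes by invoking Theorem \ref{thm gen. extraspacial}. You avoid the induction and the appeal to the generalized extraspecial theorem entirely: you extract the inversion automorphism of $Q$ inside $Aut_{\E}(Q)$ from the central involution of $SL(2,p)$, extend it to $N_P(Q)$ by receptivity (Lemma \ref{lem fully normalized w.r.t E}(a), with $N_{\alpha}(Q)=N_P(Q)$ because inversion is central in $Aut(Q)$), and derive the contradiction from coprime action together with the fact that $\Omega(P)=\Omega(Q)\cong C_p\times C_p$ for a non-cyclic metacyclic $p$-group with $p$ odd. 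What your route buys is a self-contained, non-inductive proof with far fewer internal dependencies; what the paper's route buys is reuse of its general machinery (quotient semisaturated systems, Theorem \ref{thm gen. extraspacial}), and in fact your extension-plus-coprime-action technique is close in spirit to the arguments the paper deploys elsewhere (e.g.\ in the proofs of Theorems \ref{thm gen. extraspacial} and \ref{thm p-group of rank 2}).

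Two steps deserve slightly more care, though neither threatens the argument. First, ``a short calculation yields $\bar\alpha^2=1$'' is not literally automatic: what the calculation gives is that $\bar\alpha^2$ acts trivially on $Q$ and on $N_P(Q)/Q$, hence is a $p$-element (any coprime-order element of that stability group is trivial by Lemma \ref{coprime action}(b)), so $\bar\alpha$ has order $2p^b$; replacing $\bar\alpha$ by $\bar\alpha^{p^b}$ produces the involution you need, still restricting to $\alpha$ on $Q$ and still satisfying $[N_P(Q),\bar\alpha]\leq Q$. Second, the bound $|\Omega(P)|\leq p^2$ does not follow from the metacyclic series alone quite as quickly as stated, since a priori $\Omega(P)\cap C$ (for $C$ the cyclic normal subgroup) need not have order $p$; one clean fix is to note that $\Omega(P)$ lies in the preimage $P_1$ of the order-$\leq p$ subgroup of $P/C$, and a $p$-group ($p$ odd) with a cyclic subgroup of index $\leq p$ is cyclic, $C_{p^{m}}\times C_p$ or modular, so its elements of order $p$ generate an elementary abelian group of order at most $p^2$ (alternatively, quote regularity of metacyclic $p$-groups for odd $p$). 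With these routine repairs your proof is complete and independent of Theorem \ref{thm gen. extraspacial}.
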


\begin{proof}[\textbf{Proof.}]
	Let $P$ be a minimal counter example to the theorem and let $\E$ be a minimal semisturated fusion system on $P$ such that $\E\neq N_{\E}(P)$. Thus, we see that $P$ has a fully normalized essential subgroup $Q$ by Theorem \ref{thm Alperin 2}.  Clearly, $P$ is nonabelian. We also note that each proper subgroup of $P$ is also metacyclic.
	
	 We see that $Aut_{\E}(Q)$ is not $p$-closed by Lemma \ref{key lemma}, and so $Aut(Q)$ is not $p$-closed. It follows that $Q\cong C_{p^n}\times C_{p^n}$ by Lemma \ref{lem aut of metacyclic of odd order}. Set $\D= N_{\E}(Q)$. Note that $\D$ is also semisaturated as well by Lemma \ref{lem N_{E}(Q) is also semisaturated }.
	
	Let $R=N_P(Q)$ and assume that $R<P$. The minimality of $P$ forces that $R\unlhd \D$, and so $Aut_{P}(Q)\unlhd Aut_{\E}(Q)$ which is not the case by Lemma \ref{key lemma}. This contradiction shows that $Q\lhd P$. 
	
 Now assume that $\D<\E$. The minimality of $\E$ forces that $N_P(Q)=P\unlhd \D$. Thus, we again have $Aut_P(Q)\unlhd Aut_{\E}(Q)$, which is not possible by Lemma \ref{key lemma}. Hence, we observe that $\D=\E$, that is, $Q\unlhd \E$. 
	
	Now set $U= \Phi (Q)$ and assume that $U\neq 1$. Clearly $U\unlhd \E$ as $U$ is a characteristic subgroup of $Q$. Write $\overline \E=\E/U$ and $\overline P=P/U$. Then $\overline \E$ is a semisaturated fusion system on $\overline P$ by Lemma \ref{lem E/Q is semisaturated}. By the minimality of $P$, we get that $\overline P \unlhd\overline \E$. Thus, we observed that $Aut_{\overline P}(\overline Q)\lhd Aut_{\overline \E}(\overline P)$. Set $A=Aut_{\E}(Q)$ and consider the action of $A$ on $Q/U$. Note that $C_{A}(Q/U)$ is a $p$-group. Set $C=C_{A}(Q/U)$ and $D=Aut_P(Q)$. Then we see that $A/C=Aut_{\overline \E}(\overline Q)$ and $CD/C=Aut_{\overline P}(\overline Q)$, and so $CD\unlhd A$.
We see that $A=Out_{\E}(Q)$ as $Inn(Q)=1$. Thus, there is a strongly $D$-embedded subgroup in $A$ by Proposition \ref{prop charac. of essentails} (b), and moreover $O_p(A)\cap D=1$. Since $CD\leq O_p(A)$, we get that $D=Aut_P(Q)=1$. This is impossible as $Q$ is $\E$-centric and $P>Q$. This contradiction shows that $U=1$ and $Q=C_p\times C_p$. 
	
	Since $Aut_{P}(Q)>1$ and a Sylow $p$-subgroup of $GL(2,p)$ is of order $p$, we see that $Aut_{P}(Q)$ is isomorphic to a Sylow $p$-subgroup of $GL(2,p)$.
	
	Thus, $p=|Aut_{P}(Q)|=|N_P(Q)/C_P(Q)|=|P/Q|$. In particular, we see that $P$ is a nonabelian group of order $p^3$ with exponent $p^2$. Then we get that $P\unlhd \E$ by Theorem \ref{thm gen. extraspacial}, which leads the final contradiction.
\end{proof}

\begin{theoremn}\label{thm 2}
	Let $\E$ be a semisaturated fusion system on a finite metacyclic $2$-group $P$. If $P$ is not dihedral, semidihedral or generalized quaternion, then $\E=N_{\E}(P)$.
\end{theoremn}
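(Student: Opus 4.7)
My strategy is to argue by a minimal-counterexample reduction, following the pattern of Theorem \ref{thm metacyclic of odd order} while tracking the extra phenomena that appear at $p=2$. Let $P$ be a metacyclic $2$-group of minimal order that is not dihedral, semidihedral, or generalized quaternion and on which some semisaturated fusion system $\E$ satisfies $\E\neq N_\E(P)$; choose $\E$ minimal for this property. By Lemma \ref{lem P normal E iff no essential}, $P$ admits a fully normalized $\E$-essential subgroup $Q$, and by Lemma \ref{key lemma} the group $Aut_\E(Q)$---and hence $Aut(Q)$---is not $2$-closed. Note that $Q$ is itself metacyclic.

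The first substantive input is the $p=2$ analogue of Lemma \ref{lem aut of metacyclic of odd order}: if $Q$ is a metacyclic $2$-group with $Aut(Q)$ not $2$-closed, then $Q\cong C_{2^n}\times C_{2^n}$ for some $n\geq 1$ or $Q\cong Q_8$. Its proof mirrors the minimal-counterexample argument of Lemma \ref{lem aut of metacyclic of odd order}; the care needed at $p=2$ is that the fact ``$\Omega(Q)$ has exponent $p$'' used for class-$2$ groups with $|Q'|=p$ at odd primes breaks down, and the case where $Q$ is non-cyclic with a unique involution is exactly what produces the extra exception $Q_8$.

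With this classification in hand, I would split into two cases. In the homocyclic case $Q\cong C_{2^n}\times C_{2^n}$, the argument of Theorem \ref{thm metacyclic of odd order} goes through essentially verbatim: minimality of $P$ applied to $N_P(Q)$ yields $Q\lhd P$; minimality of $\E$ yields $\E=N_\E(Q)$; quotienting by $U=\Phi(Q)$ and invoking the inductive hypothesis on the smaller metacyclic $2$-group $P/U$ forces $\Phi(Q)=1$, hence $Q\cong C_2\times C_2$. Since $Aut_P(Q)$ embeds into a Sylow $2$-subgroup of $GL_2(\mathbb F_2)\cong S_3$, we get $|P:Q|\leq 2$ and so $|P|\leq 8$. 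The only non-abelian metacyclic $2$-groups of order at most $8$ are $D_8$ and $Q_8$, both excluded by hypothesis, while abelian metacyclic $2$-groups are strongly resistant by Glauberman together with Corollary \ref{cor correspondence}, contradicting the existence of the essential subgroup $Q$. In the quaternion case $Q\cong Q_8$, a direct structural inspection of metacyclic $2$-groups containing $Q_8$ identifies $P$ as either $Q_{2^n}$ for some $n\geq 3$ or $SD_{2^n}$ for some $n\geq 4$, both in the excluded list.

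The main obstacle, in my view, is the auxiliary classification of metacyclic $2$-groups with non-$2$-closed automorphism group---specifically, repairing the exponent-$p$ step of the odd-prime proof so as to isolate $Q_8$ as the only new exception. A secondary delicate point arises in the homocyclic case: the quotient $P/\Phi(Q)$ used in the inductive step could itself lie in the excluded dihedral, semidihedral, or generalized quaternion families, so the induction hypothesis of the current theorem does not immediately apply there. This is resolved by observing that $Q/\Phi(Q)\cong C_2\times C_2$ is a normal subgroup of $P/\Phi(Q)$ with $Aut_{P/\Phi(Q)}(Q/\Phi(Q))$ of $2$-power order at most $2$; among the excluded families only $D_8$ admits such a normal Klein four subgroup, and this lone exceptional quotient can be dispatched by a direct analysis of the induced fusion system on $D_8$ using Theorem \ref{thm Alperin 2}.
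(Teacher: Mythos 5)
Your reduction of the essential subgroup $Q$ to $C_{2^n}\times C_{2^n}$ or $Q_8$ is exactly Lemma \ref{lem Aut(P) of metacyclic 2 -groups}, which the paper simply cites from Craven--Glesser rather than reproving, so that step is fine (cite it instead of promising a proof). The genuine gap is in your homocyclic branch. The odd-prime argument of Theorem \ref{thm metacyclic of odd order} does not transfer ``essentially verbatim'', because at $p=2$ the inductive hypothesis only covers metacyclic $2$-groups outside the dihedral, semidihedral and generalized quaternion families, and the groups you feed into the induction can land in those families. You flag this only for the quotient $P/\Phi(Q)$, and your fix there does not work: when $P/\Phi(Q)\cong D_8$ you cannot conclude $P/\Phi(Q)\unlhd \E/\Phi(Q)$ by ``a direct analysis of the induced fusion system on $D_8$'', since $D_8$ genuinely carries saturated (hence semisaturated) fusion systems in which it is not normal and in which a normal Klein four subgroup is essential (e.g.\ $\F_{D_8}(S_4)$); excluding this requires information about the lift to $P$, and none is offered. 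The device the paper uses at the analogous point of its own proof --- $P'\geq Q'=Z(P)$ for $Q\cong Q_8$, so that a dihedral/semidihedral/quaternion quotient would force $|P:P'|=4$ and hence, by Lemma \ref{lem |P:P'|=4}, put $P$ itself in the excluded list --- is unavailable to you, because in your branch $Q$ is abelian and $Q'=1$. Moreover, the earlier step ``minimality of $P$ applied to $N_P(Q)$ yields $Q\lhd P$'' has the same defect when $Q\cong V_4$: there $N_P(Q)$ may be dihedral of order $8$, and you do not address it. The paper avoids this entire branch by disposing of all abelian essentials at once via Lemma \ref{lem |Q|<>4}, which rests on the quoted structural facts that a self-centralizing four-subgroup forces $P$ dihedral or semidihedral and that a self-centralizing homocyclic subgroup of a rank-$2$ $2$-group forces $|P:Q|=2$ with $P$ wreathed (hence not metacyclic); this group-theoretic input is exactly what your plan lacks and what the induction cannot substitute for.

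By contrast, your handling of the $Q_8$ case is a genuinely different, and viable, route: it is true that a metacyclic $2$-group containing $Q_8$ must be generalized quaternion or semidihedral (if $N\lhd P$ is cyclic with cyclic quotient, then $Q_8\cap N\cong C_4$ and the element $v\in Q_8\setminus N$ of order $4$ necessarily generates $P/N$, forcing $|P:N|=2$ and $P\cong Q_{2^m}$ or $SD_{2^m}$), and with that lemma supplied the $Q_8$ case dies immediately, more cheaply than in the paper, which instead shows $Z(P)$ is strongly $\E$-closed, passes to $\E/Z(P)$ on $P/Z(P)$, applies induction there, and rules out bad quotients with Lemma \ref{lem |P:P'|=4}. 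So the proposal's novelty is real in that case, but as written the abelian/homocyclic branch is not proved, and the theorem does not follow from the plan.
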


We need the following well-known fact.

\begin{lemman}\cite[Sec 5, Theorem 4.5]{Gor} \label{lem |P:P'|=4}
Let $P$ be a nonabelian 2-group such that $|P:P'|=4$. Then $P$ is either dihedral, semidihedral or generalized quaternion group.
\end{lemman}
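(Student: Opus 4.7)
The plan is strong induction on $|P|=2^n$. The base case is $n=3$: the two nonabelian groups of order $8$ are $D_8$ and $Q_8$, both of which satisfy $|P:P'|=4$ and already appear in the conclusion. So assume $n\geq 4$ and that the statement holds for all smaller orders.

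For the inductive step I would first reduce modulo a well-chosen central involution contained in $P'$. Since $P$ is a nontrivial $2$-group and $P'\trianglelefteq P$ is nontrivial, $P'\cap Z(P)\neq 1$, so I may pick $z\in P'\cap Z(P)$ of order $2$ and set $Z=\langle z\rangle$. Because $Z\leq P'$, one has $(P/Z)'=P'/Z$ and therefore $|(P/Z):(P/Z)'|=4$. Also $P/Z$ is nonabelian, since otherwise $P'\leq Z$ and $|P|\leq 8$. The induction hypothesis identifies $P/Z$ as one of $D_{2^{n-1}}$, $Q_{2^{n-1}}$, or $SD_{2^{n-1}}$, each of which has a unique cyclic maximal subgroup $\bar{C}$ of order $2^{n-2}$. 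Let $C$ be the preimage of $\bar C$ in $P$; then $C$ is a maximal subgroup of $P$ of order $2^{n-1}$ containing $Z$.

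The crux is to prove $C$ is cyclic. Since $C/Z$ is cyclic of order $2^{n-2}$ and $Z\leq Z(C)$ has order $2$, the only alternative is $C\cong C_{2^{n-2}}\times Z$, in which case I may write $C=\langle a\rangle\times\langle z\rangle$ with $|a|=2^{n-2}$ and $\langle a\rangle\cap Z=1$. Choose $b\in P\setminus C$. From the known presentation of $\bar P$ I would read off $b^{-1}ab\equiv a^{\epsilon}\pmod{Z}$, with $\epsilon=-1$ in the dihedral and quaternion cases and $\epsilon=-1+2^{n-3}$ in the semidihedral case; lifting gives $b^{-1}ab=a^{\epsilon}z^{\delta}$ for some $\delta\in\{0,1\}$. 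Because $C$ is abelian and $b^2\in C$, standard commutator identities give $P'=[C,b]$ and the map $c\mapsto[c,b]$ is a homomorphism $C\to P'$ with kernel $C_C(b)$. A direct calculation of $[a^{i}z^{j},b]=a^{(\epsilon-1)i}z^{i\delta}$ shows that $C_C(b)$ has order at least $4$ in each of the three cases (it contains $Z$ together with $a^{2^{n-3}}$ or a comparable power depending on $\epsilon$ and $\delta$). Hence $|P'|=|C|/|C_C(b)|\leq 2^{n-3}$, contradicting $|P'|=2^{n-2}$ forced by $|P:P'|=4$. So $C$ must be cyclic.

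Once $C$ is cyclic, $P$ has a cyclic subgroup of index $2$, and the classical classification of $2$-groups with a cyclic maximal subgroup enumerates exactly the possibilities $C_{2^n}$, $C_{2^{n-1}}\times C_2$, $D_{2^n}$, $Q_{2^n}$, $SD_{2^n}$, and the modular maximal-cyclic group $M_{2^n}$ (for $n\geq 4$). The first two are abelian, and a one-line calculation from the presentation of $M_{2^n}$ gives $M_{2^n}'$ of order $2$, so $|M_{2^n}:M_{2^n}'|=2^{n-1}\neq 4$ for $n\geq 4$. The hypothesis $|P:P'|=4$ therefore forces $P\in\{D_{2^n},Q_{2^n},SD_{2^n}\}$, completing the induction. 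The main obstacle is the cyclicity of $C$; everything else is either an appeal to the (standard) cyclic-maximal-subgroup classification or a routine commutator count.
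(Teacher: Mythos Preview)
The paper does not give its own proof of this lemma; it simply cites \cite[Sec~5, Theorem~4.5]{Gor} and moves on. So there is no in-paper argument to compare against.

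Your inductive proof is essentially the classical one and is correct. One small slip: when $n=4$ and $P/Z\cong Q_8$, the quotient does \emph{not} have a unique cyclic maximal subgroup---all three maximal subgroups of $Q_8$ are cyclic of order $4$. This is harmless for your argument, which only uses the \emph{existence} of a cyclic maximal subgroup $\bar C$, not its uniqueness; simply drop the word ``unique'' and everything goes through. The commutator computation showing $|C_C(b)|\geq 4$ (hence $|P'|\leq 2^{n-3}<2^{n-2}$) is correct in all cases, and the final appeal to the classification of $2$-groups with a cyclic maximal subgroup, together with the exclusion of $M_{2^n}$ via $|M_{2^n}'|=2$, is standard and valid.
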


	\begin{lemman}\cite[Proposition 3.2]{Crv2}\label{lem Aut(P) of metacyclic 2 -groups}
	Let $P$ be a metacyclic $2$-group. If $Aut(P)$ is not a $2$-group then $P\cong Q_8$ or $P\cong C_{2^n}\times C_{2^n}$.
\end{lemman}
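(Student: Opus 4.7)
The plan is a minimal-counterexample induction on $|P|$, closely paralleling the odd-prime Lemma \ref{lem aut of metacyclic of odd order} but with extra care at the exceptional prime. Take $P$ a metacyclic $2$-group of minimum order with $A=Aut(P)$ not a $2$-group, yet $P\not\cong Q_8$ and $P\not\cong C_{2^n}\times C_{2^n}$. First, $P$ cannot be cyclic since $|Aut(C_{2^n})|$ is a power of $2$, so $P$ is $2$-generated. The natural map $A\to Aut(P/\Phi(P))\cong GL(2,2)\cong S_3$ has kernel contained in $O_2(A)$ by Lemma \ref{coprime action 3}(a), so if $A$ is not a $2$-group its image contains an element of order $3$; hence by Lemma \ref{lem A act transitly on maximals} the group $A$ permutes the three maximal subgroups of $P$ transitively, so they are pairwise isomorphic. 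If $P$ were abelian this would force $P\cong C_{2^n}\times C_{2^n}$, which is excluded, so $P$ is nonabelian, $P'$ is cyclic and nontrivial, and its unique subgroup $U$ of order $2$ is characteristic in $P$.

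Since $U\cong C_2$ is central and fixed pointwise by $A$, the kernel of the restriction $A\to Aut(P/U)$ embeds into $\mathrm{Hom}(P/U,U)$, which is a $2$-group; hence the image is not a $2$-group, and by induction $P/U\cong Q_8$ or $P/U\cong C_{2^m}\times C_{2^m}$ for some $m$. If $|P'|>2$ then $P/U$ is nonabelian, forcing $P/U\cong Q_8$ and thus $|P|=16$ and $|P:P'|=4$; Lemma \ref{lem |P:P'|=4} identifies $P$ with $D_{16}$, $SD_{16}$ or $Q_{16}$, and in each of these a direct computation shows $P/U\cong D_8\not\cong Q_8$, a contradiction. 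So $U=P'$, $|P'|=2$, and $P/P'\cong C_{2^m}\times C_{2^m}$. If $m=1$ then $|P|=8$, so Lemma \ref{lem |P:P'|=4} gives $P\in\{D_8,Q_8\}$; but $Aut(D_8)$ is a $2$-group and $Q_8$ is excluded. Hence $m\geq 2$.

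The remaining case is the main obstacle: $P$ metacyclic of class $2$ with $|P'|=2$ and $P/P'\cong C_{2^m}\times C_{2^m}$, $m\geq 2$. Here the metacyclic structure forces a cyclic normal $N=\langle a\rangle$ of order $2^{m+1}$ with $P/N$ cyclic of order $2^m$ generated by the image of some $b$, and (after adjusting $b$) the presentation
\[
P=\langle a,b \mid a^{2^{m+1}}=1,\ bab^{-1}=a^{1+2^m},\ b^{2^m}\in\{1,z\}\rangle,
\]
where $z=a^{2^m}$ generates $P'$. My plan is to compute the three maximal subgroups $M_1=\langle a,b^2\rangle$, $M_2=\langle b,a^2\rangle$ and $M_3=\langle ab,\Phi(P)\rangle$ explicitly: the first two are abelian with structure obtained directly from the orders of $a, b^2$ and $b, a^2$ together with their intersections inside $P$, while for $M_3$ one uses the class-$2$ identity $(xy)^n=x^n y^n[y,x]^{\binom{n}{2}}$ to compute $(ab)^2$ and the order of $ab$. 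The expected outcome of this calculation is that, in either admissible case $b^{2^m}\in\{1,z\}$, exactly one of $\{M_1,M_2,M_3\}$ is isomorphic to $C_{2^m}\times C_{2^m}$ while the other two are isomorphic to $C_{2^{m+1}}\times C_{2^{m-1}}$, with the role swapping between $M_2$ and $M_3$ according to whether $b^{2^m}$ equals $1$ or $z$. Since $m\geq 2$ forces $C_{2^m}\times C_{2^m}\not\cong C_{2^{m+1}}\times C_{2^{m-1}}$, the three maximals of $P$ are not pairwise isomorphic, contradicting the transitive action of $A$ established in paragraph one. The main difficulty lies in the uniform invariant-factor computation of $M_3$, which requires tracking the commutator term $z^{\binom{2^k}{2}}$ arising from the class-$2$ power formula; once this is done, the contradiction completes the induction.
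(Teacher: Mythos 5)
Your argument is correct, but note that the paper does not actually prove this lemma: it is imported verbatim from \cite[Proposition 3.2]{Crv2}, so what you have written is a self-contained substitute for a citation rather than a variant of an argument appearing in the text. Your reduction follows the same template as the paper's proof of the odd-prime analogue, Lemma \ref{lem aut of metacyclic of odd order}: minimal counterexample, transitivity of $Aut(P)$ on the three maximal subgroups via Lemma \ref{lem A act transitly on maximals}, passage to $P/U$ for the characteristic central involution $U$ of $P'$, and Lemma \ref{lem |P:P'|=4} to dispose of the order-$16$ and order-$8$ configurations. The only soft spot is that the final step is announced as an ``expected outcome'' rather than carried out, but the claimed computation does check out, and can even be shortened: with $P=\langle a,b\rangle$, $a^b=a^{1+2^m}$, $z=a^{2^m}$ and $b^{2^m}\in\{1,z\}$, all three maximal subgroups $\langle a,b^2\rangle$, $\langle b,a^2\rangle$, $\langle ab,\Phi(P)\rangle$ are abelian of order $2^{2m}$; the first always contains $a$ and so has exponent $2^{m+1}$, while the class-$2$ identity gives $(ab)^{2^m}=a^{2^m}b^{2^m}z^{\binom{2^m}{2}}=z\,b^{2^m}$ (since $\binom{2^m}{2}$ is even for $m\ge 2$), so that exactly one of the other two is generated by elements of order $2^m$ and hence has exponent $2^m$, according as $b^{2^m}$ equals $1$ or $z$. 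Two maximal subgroups of different exponents cannot be isomorphic, contradicting transitivity; there is no need to compute full invariant factors. With that computation written out, your proposal is a complete and correct proof of the cited result.
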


We also need to prove \cite[Lemma 2.4]{Crv2} for semisaturated fusion systems.
\begin{lemman} \label{lem |Q|<>4}
Let $\E$ be a semisaturated fusion system on a 2-group $P$ and let $Q$ be an fully normalized abelian $\E$-essential subgroup of $P$. Then the following hold:

\begin{enumerate}
\item [(a)] If $|Q|\leq 4$, then $Q\cong V_4$ and $P$ is dihedral or semidihedral.
\item[(b)] If $|Q|>4$, and $P$ has rank $2$, then $|P:Q|=2$ and $P$ is wreathed.
\end{enumerate}

\end{lemman}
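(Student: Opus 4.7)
The plan is to extract three immediate consequences of $Q$ being a fully normalized abelian $\E$-essential subgroup of $P$. First, Lemma \ref{lem ess imp f-centric} gives that $Q$ is $\E$-centric; since $Q$ is abelian, this yields $C_P(Q)=Q$. Second, Lemma \ref{key lemma} says that $Aut_\E(Q)$ is not $2$-closed, hence $Aut(Q)$ itself cannot be a $2$-group. Third, from the fully normalized hypothesis combined with Lemma \ref{lem fully norm imps strongly closed}, we have $Aut_P(Q)\in Syl_2(Aut_\E(Q))$ with $|Aut_P(Q)|=|N_P(Q):Q|$; moreover, by Proposition \ref{prop charac. of essentails} together with the requirement $D>1$ in Definition \ref{def strongly D-embedded}, we get $Aut_P(Q)>1$.

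For (a), since $Aut(C_2)$ and $Aut(C_4)$ are $2$-groups, $Q$ cannot be cyclic of order at most $4$, so $Q\cong V_4$. Then $Aut(Q)\cong S_3$, whose Sylow $2$-subgroup has order $2$; combined with $Aut_P(Q)>1$ this forces $|N_P(Q):Q|=2$ and $Aut_\E(Q)=S_3$. In particular $N_P(Q)$ is a nonabelian $2$-group of order $8$ containing $V_4$, so $N_P(Q)\cong D_8$ (quaternion groups contain no $V_4$). The final step is to upgrade this local information to a global identification of $P$: the order-$3$ element of $Aut_\E(Q)$ fuses the three involutions of $Q$ in $\E$, and Alperin's fusion theorem (Theorem \ref{thm Alperin 2}) combined with the standard $2$-group-theoretic argument of \cite[Lemma~2.4(a)]{Crv2} shows that a $2$-group with a self-centralizing $V_4$ admitting such fusion must be of maximal class, and the only options containing $V_4$ are dihedral and semidihedral.

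For (b), since $P$ has rank $2$, $Q$ is generated by at most two elements, so $Q\cong C_{2^m}\times C_{2^n}$ with $m\ge n\ge 0$. I rule out $Q$ cyclic (trivially $Aut(Q)$ is a $2$-group) and the case $m>n$ (a standard fact: $Aut(C_{2^m}\times C_{2^n})$ is a $2$-group when $m>n$, which follows from the characteristic chain $\Omega_k(Q)$ and Lemma \ref{coprime action 3}(b)), so $Q\cong C_{2^n}\times C_{2^n}$ with $n\ge 2$. The order-$3$ subgroup of $Aut_\E(Q)$, which exists by non-$2$-closure, acts on $Q/\Phi(Q)\cong V_4$ permuting the three maximal subgroups cyclically. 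I would then argue $Q\trianglelefteq P$ and $|P:Q|=2$ by combining the rank-$2$ assumption on $P$, the Sylow-$2$ structure of $GL(2,\mathbb{Z}/2^n)$, and the fact that $Aut_P(Q)\in Syl_2(Aut_\E(Q))$; finally, the $P/Q$-action must swap the two cyclic generators of $Q$ (forced because the order-$3$ element normalises $Aut_P(Q)$ in a specific way, as in \cite[Lemma~2.4(b)]{Crv2}), giving $P\cong C_{2^n}\wr C_2$. The main obstacle in both parts is the jump from the local fusion data at $N_P(Q)$ to the global identification of $P$; this is a $2$-group-theoretic argument borrowed from the saturated setting of \cite{Crv2}, and the plan is to verify that each step transports cleanly to the semisaturated setting using the tools established earlier, since all fusion-theoretic ingredients (Alperin fusion, essential characterisation via Proposition \ref{prop charac. of essentails}, and strong closure of $Aut_P(Q)$) have been shown to have semisaturated analogues.
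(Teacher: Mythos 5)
Your preliminary reductions (that $Q$ is $\E$-centric, hence $C_P(Q)=Q$; that $Aut_{\E}(Q)$ is not $2$-closed by Lemma \ref{key lemma}; that $Aut_P(Q)>1$) are correct, and your direct identification $Q\cong V_4$ in (a) and $Q\cong C_{2^n}\times C_{2^n}$, $n\geq 2$, in (b) matches what the paper gets by citing Craven--Glesser. But there is a genuine gap: you assert that $Aut_P(Q)\in Syl_2(Aut_{\E}(Q))$, attributing this to the fully normalized hypothesis plus Lemma \ref{lem fully norm imps strongly closed}. That lemma says only that $Aut_P(Q)$ is strongly closed in a Sylow $2$-subgroup $T$ of $Aut_{\E}(Q)$ containing it; in the semisaturated setting $Aut_P(Q)$ is in general a proper subgroup of such a $T$ (it is $Aut_S(Q)$, for the ambient saturated pair $(\F,S)$, that is Sylow when $Q$ is fully $\F$-normalized --- see the remark following Lemma \ref{lem aut_p(Q) is strongly closed}). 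This false Sylow claim is load-bearing in your part (b): it is one of the three ingredients you propose to combine to get $|N_P(Q):Q|=2$ and $Q\unlhd P$, and without it your outline does not yield the bound. This is precisely the point where the saturated argument of \cite[Lemma 2.4]{Crv2} does \emph{not} transport cleanly. The paper's substitute is Proposition \ref{prop charac. of essentails}(b): since $Q$ is abelian, $Aut_{\E}(Q)=Out_{\E}(Q)$ and $Aut_P(Q)\cap O_2(Aut_{\E}(Q))=1$; the kernel of the action on $Q/\Phi(Q)\cong V_4$ is a normal $2$-subgroup whose image in $GL(2,2)\cong S_3$ is trivial while the image of $Aut_{\E}(Q)$ is not $2$-closed, so that kernel equals $O_2(Aut_{\E}(Q))$ and $Aut_P(Q)$ embeds into $S_3$, forcing $|Aut_P(Q)|=|N_P(Q):Q|=2$.

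A second, smaller gap in (b): after identifying $R=N_P(Q)$ as wreathed via the computation of \cite{Crv2}, you still must pass from $|N_P(Q):Q|=2$ to $|P:Q|=2$; you gesture at rank $2$ and Sylow structure, but the paper's argument is the short observation that $Q$ is characteristic in the wreathed group $R$, hence $Q\unlhd N_P(R)\leq N_P(Q)=R$, so $R=P$ since a proper subgroup of a $2$-group is proper in its normalizer. You should include this step explicitly. Part (a) of your proposal is essentially correct but more roundabout than necessary: once $C_P(Q)=Q$ with $Q\cong V_4$ proper in $P$, the purely group-theoretic statement \cite[Corollary 2.3(3)]{Crv2} already gives that $P$ is dihedral or semidihedral; no appeal to the fusion of involutions or to Theorem \ref{thm Alperin 2} is needed.
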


\begin{proof}[\textbf{Proof.}]
$(a)$	We know that $Aut_{\E}(Q)$ is not $2$-closed by Lemma \ref{key lemma}, and so $Q\cong V_4$. Note that $C_P(Q)=Q$ by Proposition \ref{prop charac. of essentails}(a). It follows that $P$ is dihedral or semidihedral by \cite[Corollary 2.3 (3)]{Crv2}.

$(b)$ Since  $Aut_{\E}(Q)$ is not $2$-closed, it is not a $2$-group. We also have that rank of $Q$ is $2$, and so $Q=C_{2^n}\times C_{2^n}$ by \cite[Corollary 2.3 (4)]{Crv2}.

Note that $Aut_{\E}(Q)=Out_{\E}(Q)$ as $Q$ is abelian, and so $O_2(Aut_{\E}(Q))\cap Aut_P(Q)=1$ by Proposition \ref{prop charac. of essentails}(b). Set $A=Aut_{\E}(Q)$. Consider the action of $A$ on $Q/\Phi(Q)\cong V_4$. Note that $C_{A}(Q/\Phi(Q))$ is a $2$-group, and so the image $\overline A$ of $A$ in $GL(2,2)\cong S_3$,  is not $2$-closed, and so $\overline A\cong S_3.$ We get that $C_{A}(Q/\Phi(Q))=O_2(A)$ as $O_2(\overline A)=1$. On the other hand, we have that $Aut_P(Q)\cap O_2(A)=1$ and $Aut_P(Q)>1$ , which forces that $|Aut_P(Q)|=2$. Let $R=N_P(Q)$. Then $|R:Q|=2$. As in the computation carried in  \cite[Lemma 2.4]{Crv2}, we can get that $R$ is  wreathed. Notice that $Q$ is characteristic in $R$. Hence, $Q\lhd N_P(R)\subseteq N_P(Q)=R$. It follows that $R=P$, which completes the proof.
\end{proof}

\begin{proof}[\textbf{Proof of Theorem \ref{thm 2}.}]
	Let $P$ be a minimal counter example to the theorem. We see that $P$ has a fully normalized essential subgroup $Q$ by Theorem \ref{thm Alperin 2}. Clearly, $P$ is nonabelian.
	
	If $Q$ is abelian, then $P$ is dihedral, semidihedral or wreathed by Lemma \ref{lem |Q|<>4}. By our hypotesis, $P$ is not dihedral or semidihedral, and $P$ is not wreathed as $P$ is metacyclic. It follows that $Q\cong Q_8$ by Lemma \ref{lem Aut(P) of metacyclic 2 -groups} and Lemma \ref{key lemma}.
 
	We claim that $Z=Z(P)$ is strongly $\E$-closed.
	Note that $Z\leq C_P(Q)=Z(Q)$ as $Q$ is $\E$-centric, and so $Z=Z(Q)$. Clearly, this is correct for any $\E$-essential subgroup. Since $Z$ is invariant under $Aut_{\E}(Q)$ and $Aut_{\E}(P)$, we get that $Z$ is strongly closed by Theorem \ref{thm Alperin 2}.
	
	Now consider the semisaturated fusion system $\E/Z$ on $P/Z$. We claim that $P/Z\lhd \E/Z$. We can definitely suppose that $P/Z$ is nonabelian.
	
	 Assume that $P/Z$ is dihedral, semidihedral or quaternion. Notice that $P'\geq Q'=Z(Q)=Z(P)$. Thus, $P/P'\cong (P/Z(P))/(P'/Z(P))\cong C_2\times C_2$. This implies that $P$ is either dihedral, semidihedral or quaternion by Lemma \ref{lem |P:P'|=4}, which is not the case. Then by induction applied to $P/Z$, we get that $P/Z \lhd \E/Z$. Write $\overline P=P/Z$. It follows that $Aut_{\overline P}(\overline Q) \lhd Aut_{\overline \E}(\overline Q)$. Note that $|Aut_P(Q)|>4=|Inn(Q)|$, and $Aut_P(Q)\leq Aut(Q) \cong S_4$ which forces that $Aut_P(Q)$ is a Sylow $2$-subgroup of $Aut_{\E}(Q)$. 
	  The kernel of the action of $Aut_{\E}(Q)$ to $Q/Z$ is a $2$-group as $Z=\Phi(P)$, which yields that $Aut_{P}(Q)\lhd Aut_{\E}(Q)$ as $Aut_{\overline P}(\overline Q) \lhd Aut_{\overline \E}(\overline Q).$ We have a contradiction by Lemma \ref{key lemma}, which completes the proof.
 
\end{proof}

\subsection{Some $p$-groups of rank $2$}
The complete classification of $p$-groups of rank $2$ for $p\geq 3$ can be found in \cite{Drv} (see Theorem A.1). In that subsection, we prove the following:

\begin{theoremn}\label{thm p-group of rank 2}
	The following $p$-groups of rank $2$ are resistant in semisaturated fusion systems.
	\begin{enumerate}

		\item[(a)] $P= C(p,r)=\langle a^p=b^p=c^{p^{r-2}}=1 \mid [a,b]=c^{p^{r-3}}, [a,c]=[b,c]=1 \rangle$ where $p\geq 3$  and $r\geq 4$.
		
		\item[(b)] $P= G(p,r,\epsilon)=\langle a^p=b^p=c^{p^{r-2}}=1 \mid [b,c]=1, [a,b^{-1}]=c^{p^{\epsilon r-3}}, [a,c]=b \rangle$ where $p\geq 5$ and $r\geq 4.$
	\end{enumerate}
	
\end{theoremn}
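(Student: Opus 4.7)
The plan is to apply Lemma \ref{lem P normal E iff no essential}: I will show that no fully normalized $\E$-essential subgroup $Q$ of $P$ exists, whence $P \unlhd \E$. Suppose for contradiction that such a $Q$ is present. By Proposition \ref{prop charac. of essentails}(a), $Q$ is $\E$-centric, so $Z(P)\leq Q$ and $C_P(Q)\leq Q$, and by Lemma \ref{key lemma}, $Aut_\E(Q)$ is not $p$-closed.

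First I would analyze the subgroup structure in each case. For $P=C(p,r)$ the nilpotence class is $2$, $Z(P)=\langle c\rangle$, $P'=\langle c^{p^{r-3}}\rangle$ has order $p$, and $\Omega(P)=\langle a,b,c^{p^{r-3}}\rangle$ is extraspecial of order $p^3$ and exponent $p$. For $P=G(p,r,\epsilon)$ the commutator relation $[a,c]=b$ forces higher nilpotence class, but the structure remains rigid: $\langle b,c\rangle$ is a normal abelian maximal subgroup, $Z(P)$ is a proper cyclic subgroup of $\langle c\rangle$, and $\Omega(P)$ is again small and tightly controlled. In both families, the rank-$2$ hypothesis together with $C_P(Q)\leq Q$ and $Z(P)\leq Q$ narrows the candidates for $Q$ to a short list: essentially the abelian maximal subgroups of $P$ (all of type $C_p\times C_{p^{r-2}}$), and possibly a few nonabelian self-centralizing subgroups of smaller index.

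Next, for each candidate I would describe $Aut_P(Q)$ and $Aut(Q)$ explicitly from the defining relations and then rule out a compatible $Aut_\E(Q)$. Since $Q$ is normal in $P$ for the maximal abelian candidates, $Aut_P(Q)\cong P/Q$ is cyclic of order $p$ and acts on $Q$ as an explicit unipotent transvection fixing the characteristic cyclic subgroup $\langle c\rangle \cap Q$. Although $Aut(Q)$ has a large $p'$-part, Proposition \ref{prop charac. of essentails}(b),(c) imposes the strong constraints that $Out_\E(Q)$ admits a strongly $Out_P(Q)$-embedded subgroup and $Inn(Q)=Core_{Aut_\E(Q)}(Aut_P(Q))$. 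Combined with the explicit shape of the $Aut_P(Q)$-action on $Q$, these should force every element of $Aut_\E(Q)$ to normalize $Aut_P(Q)$, making $Aut_\E(Q)$ $p$-closed and contradicting Lemma \ref{key lemma}. Nonabelian candidates would be excluded similarly by analyzing the action on $Q/\Phi(Q)$ and on $Z(Q)$ and invoking Lemma \ref{coprime action 3}.

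The main obstacle will be the bookkeeping in the $G(p,r,\epsilon)$ case, where the relation $[a,c]=b$ destroys the symmetry between the generators and makes the characteristic series of $P$ less uniform; this likely forces a separate treatment for small $r$ (where $Z(P)$ may coincide with $P'$) versus large $r$, and a dependence on the sign parameter $\epsilon$. A secondary subtlety is verifying that every self-centralizing subgroup of $P$ really appears in the candidate list; this leans on the rank-$2$ hypothesis via the fact that all elementary abelian subgroups of rank $2$ of $P$ lie inside $\Omega(P)$, which has order $p^3$, drastically limiting how a hypothetical essential $Q$ can sit inside $P$.
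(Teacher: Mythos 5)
Your reduction step is fine and matches the paper's strategy in spirit: assume a fully normalized $\E$-essential $Q$ exists, use Proposition \ref{prop charac. of essentails}(a) and Lemma \ref{key lemma} to get $Z(P)\leq Q$, $C_P(Q)\leq Q$ and $Aut_{\E}(Q)$ not $p$-closed, and then hunt the candidates. For $P=C(p,r)$ this does work, because $|P:Z(P)|=p^2$ forces $Q$ to be an abelian maximal subgroup isomorphic to $C_p\times C_{p^{r-2}}$ with $r-2\geq 2$, and then the contradiction comes from the group-theoretic fact that $Aut(Q)$ is $p$-closed (Lemma \ref{lem aut of metacyclic of odd order}) — not, as you suggest, from Proposition \ref{prop charac. of essentails}(b),(c) ``forcing every element of $Aut_{\E}(Q)$ to normalize $Aut_P(Q)$''. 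That forcing mechanism cannot be right as stated: conditions (b),(c) are exactly the conditions satisfied by genuine essential subgroups in arbitrary saturated fusion systems, so by themselves they never yield $p$-closedness; a contradiction must come either from $Aut(Q)$ itself being $p$-closed, or from a global interaction with $P$.

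This is where the proposal has a genuine gap, and it is precisely the hard half of the theorem. For $P=G(p,r,\epsilon)$ the centric candidates (classified in \cite[Lemma A.8]{Drv}, which you would need or have to reprove) include the \emph{nonabelian} maximal subgroup $Q=\langle a,b,c^p\rangle\cong C(p,r-1)$ and, when $r=4$, subgroups $Q\cong C_p\times C_p$. For these, $Aut(Q)$ is genuinely not $p$-closed (its image on $Q/\Phi(Q)$, resp.\ on $Q$, contains $SL(2,p)$), and one can exhibit subgroups of $Aut(Q)$ containing $Aut_P(Q)$ that satisfy all the local constraints of Proposition \ref{prop charac. of essentails}; indeed Lemma \ref{lem aut(C(p,r))} in the paper shows such configurations exist whenever $C(p,r-1)$ is essential. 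So your proposed local exclusion (``analyzing the action on $Q/\Phi(Q)$ and on $Z(Q)$ and invoking Lemma \ref{coprime action 3}'') cannot eliminate these candidates: no contradiction is visible inside $Aut_{\E}(Q)$ alone. The paper's proof needs a global argument that your plan lacks entirely: extract a $p'$-automorphism $\alpha$ of $Q$ (resp.\ of $T=\Omega(P)$) normalizing $Aut_P(Q)$ with $[Z(\Omega(Q)),\alpha]=1$ (Lemma \ref{lem aut(C(p,r))}), extend it to $\overline\alpha\in Aut_{\E}(P)$ using receptivity, apply the counting Proposition \ref{prop counting theorem} to produce an $\overline\alpha$-invariant cyclic subgroup $R\not\leq Q$ (this uses $\log_p|P:Q|=1\not\equiv 0 \bmod q$), and then compute — via $R\cap Q\geq Z(\Omega(Q))$ in the $C(p,r-1)$ case, and via the power-automorphism congruences of Lemma \ref{lem if n,m then nm} together with the determinant-one condition in the $C_p\times C_p$, $r=4$ case — that $\overline\alpha$ centralizes too much, contradicting either $[Aut_P(Q),\alpha]\nleq Inn(Q)$ or $|\overline\alpha|\geq 3$. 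Without some substitute for this extension-plus-counting step, the proof of part (b) does not go through.
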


Let $\langle \alpha \rangle$ act on a group $G$ via automorphisms and $n\in \mathbb N$. We say that $\alpha$ acts on $G$ via \textbf{a power automorphism} by $n$ if $g^\alpha =g^n$ for all $g\in G$. 

\begin{lemman}\label{lem if n,m then nm}
	Let $P$ be a nonabelian $p$-group of order $p^3$ and let $\alpha$ be an automorphism of $P$ stabilizing a subgroup series $1<U<M<P$. Suppose that $\alpha$ acts on $P/M$, $M/U$ and $U$ via power automorphism by $n$, $m$ and $k$, respectively. Then the following hold:
	
	\begin{enumerate}
		\item[(a)]	If $U=Z(P)$, then $k\equiv nm \ mod \ p$.
		
		\item[(b)] 	If $U\neq Z(P)$, then $m\equiv nk \ mod \ p$.
	\end{enumerate}

\end{lemman}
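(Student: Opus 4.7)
The plan is to exploit the fact that a nonabelian $p$-group $P$ of order $p^3$ has nilpotency class $2$, so $P' = Z(P)$ is of order $p$, $P/Z(P)\cong C_p\times C_p$, and commutators in $P$ satisfy the bilinear identity $[x^a,y^b]=[x,y]^{ab}$. In both cases, the subgroup $M$ is maximal, hence normal, and abelian of order $p^2$; moreover $Z(P)\leq \Phi(P)\leq M$.

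For part (a), where $U=Z(P)=P'$, I would pick $x\in P\setminus M$ and $y\in M\setminus U$ as representatives generating $P/M$ and $M/U$ respectively. Since $\Phi(P)=U$, the two cosets $xU$ and $yU$ are $\mathbb{F}_p$-linearly independent in $P/U$, so $\langle x,y\rangle=P$; hence $[x,y]$ is a nontrivial element of $P'=U$, so it generates $U$. Writing $\alpha(x)=x^n m_x$ with $m_x\in M$ and $\alpha(y)=y^m u_y$ with $u_y\in U$, I apply $\alpha$ to $[x,y]$ and compute $[\alpha(x),\alpha(y)]=[x^n m_x, y^m u_y]$. Since $u_y$ is central it drops out, and since $m_x,y\in M$ and $M$ is abelian, the commutator $[m_x,y^m]$ vanishes. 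Using class-$2$ bilinearity this leaves $[x,y]^{nm}$, which must equal $\alpha([x,y])=[x,y]^k$, giving $k\equiv nm\pmod p$.

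For part (b), where $U\neq Z(P)$, I first need to pin down the structure of $M$: it is an abelian group of order $p^2$ containing both $Z(P)$ and $U$ (two distinct subgroups of order $p$), so $M$ cannot be cyclic and must be elementary abelian, $M\cong C_p\times C_p$. Let $z$ generate $Z(P)$ and $u$ generate $U$; then $z\in M\setminus U$ represents a generator of $M/U$. Picking $x\in P\setminus M$, I compute $[x,u]\in P'=Z(P)=\langle z\rangle$, and observe that $[x,u]\neq 1$ (else $u$ would commute with $x$ and with all of $M$, forcing $u\in Z(P)$, a contradiction). So $[x,u]=z^c$ with $c\not\equiv 0\pmod p$. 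Now I apply $\alpha$ two ways: on the one hand $\alpha(z^c)=(z^m u^j)^c=z^{mc}u^{jc}$ for some $j$ (coming from $\alpha(z)\equiv z^m\pmod U$), and on the other $[\alpha(x),\alpha(u)]=[x^n m_0,u^k]$ for some $m_0\in M$; since $m_0\in M$ commutes with $u$ (as $M$ is abelian), this reduces to $[x^n,u^k]=[x,u]^{nk}=z^{cnk}$ via bilinearity. Equating and using $Z(P)\cap U=1$ separates the identity into $jc\equiv 0$ and $mc\equiv cnk\pmod p$; dividing by $c$ yields $m\equiv nk\pmod p$.

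The mild obstacle is justifying the reductions at the commutator level cleanly, i.e., verifying that the ``error terms'' $m_x,u_y,m_0$ introduced by the congruence presentation of $\alpha$ really vanish in the final computation; this is where the combination $P$ has class $2$ and $M$ is abelian is used decisively. Once that is set up, both identities drop out of a single application of bilinearity of the commutator.
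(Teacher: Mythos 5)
Your proof is correct. Part (a) is essentially the paper's own argument: pick $x\in P\setminus M$, $y\in M\setminus U$, use that error terms in $U=Z(P)$ drop out, that $M$ (of order $p^2$) is abelian, and that class-$2$ bilinearity gives $[x^n,y^m]=[x,y]^{nm}$, while $[x,y]$ generates $U=P'$. For part (b) you diverge: the paper does no new computation, but observes that $U\neq Z(P)$ forces $M=U\times Z(P)$, so $Z(P)\cong M/U$ and $M/Z(P)\cong U$ as $\alpha$-modules, whence $\alpha$ acts by power $m$ on $Z(P)$ and by power $k$ on $M/Z(P)$, and then simply applies part (a) to the series $1<Z(P)<M<P$. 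You instead redo the commutator calculation directly with $[x,u]=z^c$, splitting the resulting identity along $M=Z(P)\times U$. Both are valid; the paper's reduction is shorter and reuses (a) verbatim, while your direct route has the small bonus of producing the auxiliary fact $\alpha(z)=z^m$ (your $j\equiv 0$) explicitly rather than implicitly. One cosmetic point: in (a) you invoke $\Phi(P)=U$ to get $\langle x,y\rangle=P$; this is fine since $\Phi(P)=P'P^p=Z(P)$ for a nonabelian group of order $p^3$, but you could avoid it entirely by arguing as you do in (b) that $[x,y]=1$ would put $y$ in the center of $\langle x,M\rangle=P$.
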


\begin{proof}[\textbf{Proof.}]
$(a)$ Suppose that $U=Z(P)$.	Pick $x\in P- M$ and $y \in M-Z(P)$. Then for some $s\in M$ and $t\in Z(P)$ we have $$[x,y]^{\alpha}=[x^{n}s, y^{m}t]=[x^{n}s, y^{m}]=[x^{n}, y^{m}][s, y^{m}]^{x^n}=[x^{n}, y^{m}]=[x,y]^{nm}.$$
	
	Since $[x,y]\neq 1$, we get $Z(P)=\langle [x,y] \rangle$, and the result follows.
	
	$(b)$  Suppose that $U\neq Z(P)$. Then $UZ(P)=M$,  $Z(P)\cong M/U$ and $M/Z(P) \cong U$. It follows that $\alpha$ acts via a power automorphism on $Z(P)$ by $m$ and on $M/Z(P)$ by $k$. Then consider an $\alpha$-invariant subgroup series $1<Z(P)<M<P$ and apply part (a).
\end{proof}

\begin{lemman}\label{lem aut(C(p,r))}
Let $\E$ be a semisaturated fusion system on a $p$-group $P$ where $p\geq 5$, and $Q \cong C(p,r)$ be a fully normalized $\E$-essential subgroup of $P$ where $r\geq 3$. Then there exists $\alpha \in Aut_{\E}(Q)$ of order $q^k$ where $q$ is a prime dividing $p-1$ such that $\langle \alpha \rangle$ normalizes $Aut_{P}(Q)$, $[Aut_P(Q),\alpha]\nleq Inn(Q)$ and $[Z(\Omega(Q)),\alpha]=1$.

\end{lemman}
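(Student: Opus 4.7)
The plan is to leverage an explicit description of $Aut(Q)$ for $Q=C(p,r)$. The subgroup $\Omega(Q)$ is extraspecial of order $p^3$ and exponent $p$, with $Z(\Omega(Q))=[Q,Q]$ of order $p$, and a direct computation from the presentation yields
\[
Out(Q)\cong GL(2,p)\times C_{p^{r-3}},
\]
where the $GL(2,p)$ factor records the induced action on $\Omega(Q)/[Q,Q]\cong \mathbb F_p^2$ via a natural projection $\pi\colon Aut(Q)\to GL(2,p)$, and the $C_{p^{r-3}}$ factor corresponds to the central units $1+p\mathbb Z/p^{r-2}\mathbb Z$ acting on $\langle c\rangle$ (for $r=3$ this factor is trivial and $Out(Q)\cong GL(2,p)$). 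Since $\ker\pi\cap Aut_{\E}(Q)$ is a normal $p$-subgroup of $Aut_{\E}(Q)$, we have that $Aut_{\E}(Q)$ is $p$-closed if and only if $\pi(Aut_{\E}(Q))\le GL(2,p)$ is $p$-closed, and for $p\ge 5$ Dickson's classification of subgroups of $GL(2,p)$ implies that a subgroup with nontrivial Sylow $p$-part is $p$-closed iff it does not contain $SL(2,p)$.

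For part (a), assume $Q$ is $\E$-essential. Lemma \ref{key lemma} forces $Aut_{\E}(Q)$ to be non-$p$-closed, whence $\pi(Aut_{\E}(Q))\supseteq SL(2,p)$. Proposition \ref{prop charac. of essentails}(b) combined with the centrality of $C_{p^{r-3}}$ in $Out(Q)$ yields $Out_P(Q)\cap C_{p^{r-3}}=1$, so $Out_P(Q)$ injects into its $\pi$-image, which must be a nontrivial Sylow $p$-subgroup $U$ of $GL(2,p)$, forcing $Out_P(Q)\cong U\cong C_p$. I then pick $h$ in the diagonal torus of $SL(2,p)\cap N_{GL(2,p)}(U)$ of prime-power order $q^k$ with $q\mid p-1$ and $h\ne\pm I$ (such an $h$ exists for $p\ge 5$ since the torus has order $p-1\ge 4$ and we can avoid the choice $q^k=2$ that would pick out $-I$), so $h$ acts nontrivially on $U$. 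Using the bijection between Sylow $p$-subgroups of $Out_{\E}(Q)$ and of $\pi(Out_{\E}(Q))$ induced by $\pi$, lift $h$ to an element of $Out_{\E}(Q)$ that normalizes the Sylow $p$-subgroup containing $Out_P(Q)$, replace it by its $p'$-part, and then take a suitable $q^k$-power; strong closure (Lemma \ref{lem fully norm imps strongly closed}) promotes normalization of the Sylow $p$-subgroup to normalization of $Out_P(Q)$, and pulling back to $Aut_{\E}(Q)$ yields the desired $\alpha$. The three properties are then immediate: normalization of $Aut_P(Q)$ is by construction; $[Aut_P(Q),\alpha]\nleq Inn(Q)$ because $h$ acts nontrivially on $U\cong Out_P(Q)$; and $[Z(\Omega(Q)),\alpha]=1$ because $\pi(\alpha)\in SL(2,p)$ has $\det=1$ and the action on $Z(\Omega(Q))\cong\mathbb F_p$ factors through $\det\circ\pi$.

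For part (b), I argue the contrapositive: if $Aut_{\E}(Q)$ is not $p$-closed, then $Q$ is $\E$-essential. By the reduction above, $\pi(Aut_{\E}(Q))\supseteq SL(2,p)$. Corollary \ref{cor equivalent def of essential} shows $Q$ is essential once $Out_{\E}(Q)$ admits a strongly $Out_P(Q)$-embedded subgroup, which I would take as $H=\pi^{-1}(N_{GL(2,p)}(U))\cap Out_{\E}(Q)$ where $U$ is a Sylow $p$-subgroup of $GL(2,p)$ containing $\pi(Out_P(Q))$. Since $U$ is not normal in $SL(2,p)$, $H$ is proper, contains $Out_P(Q)$, and for $g\notin H$ the $\pi$-image of $H\cap H^g$ lies in $N(U)\cap N(U^{\pi(g)})$, a split torus of $GL(2,p)$ which meets every Sylow $p$-subgroup of $GL(2,p)$ trivially; this forces any $G$-conjugate $W$ of a nontrivial subgroup of $Out_P(Q)$ contained in $H\cap H^g$ to satisfy $\pi(W)=1$, i.e., $W\le C_{p^{r-3}}$. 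The main obstacle is the possibility $Out_P(Q)\cap C_{p^{r-3}}\ne 1$: this central $p$-subgroup $Z_0$ of $Out_{\E}(Q)$ is fixed by all conjugation and would sit inside $H\cap H^g$, breaking condition (ii) of strong embedding. I expect to handle it in two stages: first ruling out the subcase $\pi(Out_P(Q))=1$ (where $Out_P(Q)$ is already entirely in $C_{p^{r-3}}$) via a receptivity argument from Lemma \ref{lem fully normalized w.r.t E}, which lifts a $p$-element $\psi\in Aut_{\E}(Q)$ with $\pi(\psi)$ a Sylow-$p$ generator of $SL(2,p)$ to an automorphism of $N_P(Q)$ and contradicts the hypothesis that $N_P(Q)$ acts trivially on $\Omega(Q)/[Q,Q]$; and then, when $\pi(Out_P(Q))$ is a Sylow $p$-subgroup of $GL(2,p)$, factoring out $Z_0$ to reduce to the clean case $Out_P(Q)\cap C_{p^{r-3}}=1$ in the quotient $Out_{\E}(Q)/Z_0$, constructing the strongly embedded subgroup there, and pulling it back to $Out_{\E}(Q)$.
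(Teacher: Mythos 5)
Part (a) of your proposal is essentially sound and runs parallel to the paper's argument: the paper also passes to the action on $\Omega(Q)\Phi(Q)/\Phi(Q)$, gets $SL(2,p)$ in the image, finds a torus element normalizing the Sylow $p$-subgroup, and uses strong closure (Lemma \ref{lem fully norm imps strongly closed}) to make it normalize $Out_P(Q)$; your determinant computation is a clean substitute for the paper's power-automorphism lemma in proving $[Z(\Omega(Q)),\alpha]=1$. The genuine gap is in part (b), and it is exactly the obstacle you name but do not resolve. Since you are proving the contrapositive, essentiality is not available, so Proposition \ref{prop charac. of essentails}(b) cannot be used, and you must \emph{prove} that $Z_0:=Out_P(Q)\cap\ker\pi$ is trivial (relevant when $r\geq 4$). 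Your stage 2 cannot work even in principle: $Z_0$ is a central $p$-subgroup of $Out_{\E}(Q)$ contained in $Out_P(Q)$, so for \emph{any} proper subgroup $H$ containing a conjugate of $Out_P(Q)$ and any $g\notin H$ one has $1<Z_0\leq H\cap H^g$, so condition (ii) of Definition \ref{def strongly D-embedded} fails for every candidate; equivalently, $Z_0\leq O_p(Out_{\E}(Q))\cap Out_P(Q)$ is incompatible with Lemma \ref{lem strongly d-embedded}(b). Thus if $Z_0\neq 1$ there is no strongly $Out_P(Q)$-embedded subgroup at all and $Q$ is simply not essential; no construction in $Out_{\E}(Q)/Z_0$ can be ``pulled back'' to repair this. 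The only way to establish (b) is to show that the configuration ``$Aut_{\E}(Q)$ not $p$-closed and $Z_0\neq 1$'' cannot occur, and your proposal contains no valid argument for that.

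Your stage 1 does not supply the missing argument either: it only treats the subcase $\pi(Out_P(Q))=1$, leaving untouched the case where $\pi(Out_P(Q))$ is Sylow in $GL(2,p)$ but $|Out_P(Q)|\geq p^2$, and even in that subcase the reasoning is not a proof. Receptivity extends $\psi$ only over $N_\psi(Q)$, and when $Aut_P(Q)\leq\ker\pi$ there is no reason that $N_\psi(Q)=N_P(Q)$ for your chosen $p$-element $\psi$; moreover, even granting an extension $\overline\psi\in Hom_{\E}(N_P(Q),P)$, all it yields (via Lemma \ref{iden2}) is that $\psi$ normalizes $Aut_P(Q)$, which is perfectly compatible with $N_P(Q)$ acting trivially on $\Omega(Q)/[Q,Q]$. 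Note that in this degenerate situation strong closure (Lemma \ref{lem fully norm imps strongly closed}) only gives $Aut_P(Q)\unlhd Aut_{\E}(Q)$, and the mechanism that turns such normality into a contradiction elsewhere in the paper (as in Lemma \ref{lem P normal E iff no essential}) again relies on essentiality. So part (b) as proposed is incomplete: before the subgroup $\pi^{-1}(N_{GL(2,p)}(U))\cap Out_{\E}(Q)$ can be certified as strongly $Out_P(Q)$-embedded, you need an independent argument forcing $|Out_P(Q)|=p$ with $Out_P(Q)\cap\ker\pi=1$, and that step is missing.
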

\begin{proof}[\textbf{Proof.}]
	
	First notice that $Q=\Omega(Q)Z(Q)$ where $\Omega(Q)$ is a nonabelian $p$-group of order $p^3$. $V=\Omega(Q)/\Phi(\Omega (Q))\cong C_p\times C_p.$  We note that $C_{Aut(Q)}(V)$ is a $p$-group, and $Inn(Q)\leq  C_{Aut(Q)}(V)$ as $Q=\Omega(Q)Z(Q)$. Thus, $Out(Q)$ also acts on $V$ as well. 
	
	Since $Q$ is essential, we see that $Aut_{\E}(Q)$ is not $p$-closed by Lemma \ref{key lemma} which leads that $G=Out_{\E}(Q)$ is not $p$-closed as well. Notice that $G$ acts on $V$ induced by the action of $Aut_{\E}(Q)$ on $V$. We have $C_{Aut_{\E}(Q)}(V)$ is a $p$-group, and so $ C_{G}(V)$ is also a $p$-group. Consequently, the image of $G$ in $GL(2,p)$, $\overline G$, is not $p$-closed. Hence, $\overline G\geq SL(2,p)$.
	
	Next we obtain that $C_{G}(V)=O_p(G)$ as $O_p(\overline G)=1$. Set $D=Out_P(Q)$. We have that $D\cap O_p(G)=1$ and $D>1$ is strongly closed in a Sylow $p$-subgroup of $G$ by Proposition \ref{prop charac. of essentails}, and so we see that $|D|=p$, and the image of $D$, $\overline D$, is a Sylow $p$-subgroup of $SL(2,p)$.
	
	Thus, $S=O_p(G)D$ is a Sylow $p$-subgroup of $G$. There exists a cyclic subgroup $\overline U$ of $SL(2,p)$ of order $p-1$ normalizing $\overline D$. It follows that $U$ also normalizes $S$. Note that $D$ is strongly closed in $S$ with respect to $G$. Hence, we get that $U$ also normalizes $D$. Next we observe that $U$ can be chosen of order $p-1$.
	
	Note that $[\overline U, \overline D]\neq 1$, since otherwise we have $ SL(2,p)$ is $p$-nilpotent by Burnside $p$-nilpotency theorem, which is impossible as $p\geq 5$. Thus, $[U,D]\neq 1$, and so there exists $Q\in Syl_q(U)$ such that $[Q,D]\neq 1$. Hence, we get that there exists $\langle \alpha \rangle \leq Aut_{\E}(Q)$ of order $q^k$ such that $\langle \alpha \rangle$ normalizes $Aut_{P}(Q)$ and $[Aut_{P}(Q),\alpha]\nleq Inn(Q)$. 
	
	It is left to show that $\alpha$ acts trivially on $Z(\Omega(Q))$. Since the image of $\alpha$ is in $SL(2,p)$, it is in the form of $\begin{bmatrix} \delta &0\\0&\delta^{-1} \end{bmatrix}$. Note that $ \langle \alpha \rangle $ stabilizes a subgroup series $1<Z(\Omega(Q))<M<\Omega(Q)$. It then follows that $z^{\alpha }=z^{\delta\delta^{-1}}=z$ for all $z\in Z(\Omega(Q))$ by Lemma \ref{lem if n,m then nm}.
\end{proof}

\begin{proof}[\textbf{Proof of Theorem \ref{thm p-group of rank 2}}]

 Let $Q$ be an fully normalized $\E$-essential subgroup of $P$.

Suppose that $(a)$ holds. Since $Q$ is $\E$-centric, we see that $Q\geq C_P(Q) \geq  Z(P)=\langle c \rangle$. On the other hand, $Q>Z(P)$, and so $Q$ is a maximal subgroup of $P$ and $|Q:Z(P)|=p$. It follows that $Q$ is abelian and $Q\cong C_{p^{r-2}}\times C_p$. Since $Aut_{\E}(Q)$ is not $p$-closed by Lemma \ref{key lemma}, we see that $r=3$ by Lemma \ref{lem aut of metacyclic of odd order}. This is impossible as $r\geq 4$. This contradiction shows that there is no fully normalized $\E$-essential subgroup of $P$, and so $P\unlhd \E$ by Theorem \ref{thm Alperin 2}.

Now assume $(b)$ holds. Let $Q$ be a fully normalized essential subgroup of $P$. We see that a proper centric subgroup of $P$ is isomorphic to one of the following by \cite[Lemma A.8]{Drv}; $C_p \times C_{p^{r-2}}$, $C_p \times C_{p^{r-3}}$, $C_{p^{r-2}}$, a nonabelian metacyclic group of order $p^{r-1}$, or $C(p,r-1)$. By using Lemmas \ref{key lemma}, \ref{lem aut of metacyclic of odd order} and the fact that $r\geq 4$, we see that there are two possibilities; either $Q\cong C(p,r-1)$ or, $Q\cong C_p \times C_p$  and $r=4$.

First suppose that $Q\cong C(p,r-1)$. Then $Q=\langle a,b,c^p\rangle $ (see \cite[Lemma A.8]{Drv}). We see that  there exists $\alpha \in Aut_{\E}(Q)$ of order $q^k$ where $q$ is a prime dividing $p-1$  such that $\alpha$ normalizes $Aut_P(Q)$, $[Aut_P(Q),\alpha]\nleq Inn(Q)$ and $[Z(\Omega(Q)),\alpha]=1$ by Lemma \ref{lem aut(C(p,r))}. Firstly, $\alpha$ extends to  $\overline \alpha \in Hom_{\E}(N_P(Q),P) =Aut_{\E}(P)$ as $\alpha$ normalizes $Aut_P(Q)$. Note that we can also choose $\overline \alpha$ of order $q^k$. Since $log_p(|P|/|Q|)=1 \not\equiv \ 0 \mod q $, there exists an $\overline \alpha$-invariant cyclic subgroup $R$ of $P$ such that $R \nsubseteq Q$ by Proposition \ref{prop counting theorem}. Then $R=\langle t\rangle$ where $t=xc^j $ for some $x\in \langle a,b \rangle=\Omega(P)$ and $j$ is a positive number coprime to $p$. Note that  $$t^p=(xc^j)^p=x^pc^{jp}w_1^{p\choose 2}w_2^{p\choose 3}=c^{jp}$$ for some $w_1,w_2 \in \Omega(P)$ and so  $$R\cap Q \geq \langle c^p \rangle \geq Z(\Omega(Q)).$$

Since $\overline \alpha$ acts trivially on $Z(\Omega(Q))$, we obtain that $\overline \alpha$ acts trivially on $R$, which leads that $[P, \overline \alpha]\leq Q$ due to the fact that $P=QR$. Hence, we obtain that $[Aut_P(Q),\alpha]\leq Inn(Q)$. This contradiction shows that $Q\ncong C(p,r-1)$.

Now suppose that $r=4$ and $Q\cong C_p \times C_p$. Then $Q=\langle ab^i, c^p \rangle$ for some $i=0,...,p-1$. Note that $Q$ is a maximal subgroup of $T=\langle a,b,c^p  \rangle$, and in fact $N_P(Q)=T$. Note that $Aut_{\E}(Q)$ is not $p$-closed by Lemma \ref{key lemma}. Then we see that $G=Aut_{\E}(Q)=Out_{\E}(Q)\geq SL(2,p)$, and $D=Out_P(Q)$ has order $p$. Let $\alpha \in N_G(D)\cap SL(2,p)$ of order $q^k$ where $q$ is a prime dividing $p-1$. Note that we can choose $\alpha$ so that $q^k \geq 3$ as $p\geq 5$. We see that $\alpha$ extends to $\psi \in Hom_{\E}(T,P)=Aut_{\E}(T).$ The equality holds as $T=\Omega(P).$ On the other hand, $T\cong C(r,3)$, which is not essential by the previous paragraph. Thus, we obtain the equality
$$H_T=\langle \phi\in Aut_{\E}(T)\mid N_{\phi}(T)>T\rangle= Aut_{\E}(T).$$
We observe that $N_{\phi}(T)=P$ when $N_{\phi}(T)>T$ as $T$ is a maximal subgroup of $P$. The we observe that $\phi$ extends to $\overline \phi\in Aut_{\E}(P)$ as $T$ is fully $\E$-normalized. Then we observe that every automorphism in $Aut_{\E}(T)$ extends to $Aut_{\E}(P)$. In particular, $\psi$ extends to $\overline \psi\in Aut_{\E}(P)$.

 Note that we can also choose $\overline \psi$ of order $q^k$. Since $log_p(|P|/|T|)=1 \not\equiv \ 0 \mod q $, there exists an $\overline \psi$-invariant cyclic subgroup $R=\langle x \rangle $ of $P$ such that $R \nsubseteq T$ by Proposition \ref{prop counting theorem}. Note that $|R|=p^2$ and $R\cap T=Z(T)$, and $RT=P$. We observe that $x^{\overline \psi}=x^n$ for some suitable $n\in \mathbb N.$ Thus, $\overline \psi$ induces a power automorphism on both $Z(T)$ and $P/T$ by $n$. Let $\overline \psi$ induce a power automorphism on $Q/Z(T)$ and $T/Q$ by $i,j$ respectively. 

$$ \overbrace  {1<Z(T}^n \underbrace{ )<Q}_i \underbrace{<}_j \overbrace{T<P}^n.$$

Note that $T$ is a nonabelian group of order $p^3$. Then we get 
$$ij \equiv n \ mod \ p.$$

by Lemma \ref{lem if n,m then nm}(a).

Now consider the subgroup series of $\overline P=P/Z(T)$:  $$\underbrace{\overline 1< \overline Q}_i\underbrace{<}_j \overbrace{\overline T< \overline P}^n .$$

Note that $\overline Q \neq Z(\overline P)$ as $Q$ is not normal in $ P$. It follows that $$j \equiv ni \ mod \ p$$ by Lemma \ref{lem if n,m then nm}(b). Since the restriction of $\overline \psi$ on $Q$, which is $\alpha$, has determinant $1$, $ ni \equiv j\equiv 1 \ mod  \ p$, and so $i \equiv n \ mod \ p$, which yields that $$n^2 \equiv ni \equiv1 \ mod \ p$$

$$i^2 \equiv ni \equiv1 \ mod \ p.$$
 Thus, $\overline \psi ^2$ acts trivially on each section of the subgroup series, and so $\overline \psi^2=1$ due to the coprime action. Since $|\overline \psi|=q^k\geq 3$, we obtain a contradiction, which yields that there is no fully normalized $\E$-essential subgroup of $P$, and so $P\unlhd \E$ by Theorem \ref{thm Alperin 2}.

\subsection{Proof of Theorem C}
We obtain that $P\unlhd \F$ by Theorems \ref{thm gen. extraspacial}, \ref{thm metacyclic of odd order}, \ref{thm 2} \ref{thm p-group of rank 2} and Corollary \ref{cor correspondence}. Now suppose that $(c)$ holds and $P$ is not homocyclic abelian. Then we see that $Aut(P)$ is a $2$-group by Lemma \ref{lem Aut(P) of metacyclic 2 -groups}, and so $\F=SC_{\F}(P)$.
\end{proof}

\section*{Acknowledgements}

I would like to thank Prof. Bob Oliver for his invaluable comments and remarks. The communication with him during his visit at Bilkent University was very helpful and encouraging. \vspace{0.1 cm}

Portions of this article were written at Bilkent University and Middle East Technical University. Throughout that period, I am really grateful to Prof. Ergün Yalçın, Prof. Gülin Ercan, and Prof. Yıldıray Ozan for their assistance and for providing a conducive environment.

Additionally, we acknowledge the financial support provided by TÜBİTAK 2219 grant program, which made this research possible.

\end{document}